\documentclass{amsart}
\usepackage{amssymb,amsmath,amsthm}
\usepackage{mathrsfs}
\usepackage{hyperref}
\usepackage{mathtools}
\usepackage{color}
\usepackage{verbatim}


\renewcommand{\[}{\begin{equation}\begin{aligned}}
\renewcommand{\]}{\end{aligned} \end{equation}}

\newtheorem{thm}{Theorem}
\newtheorem{prop}[thm]{Proposition}
\newtheorem{lemma}[thm]{Lemma}
\newtheorem{claim}[thm]{Claim}

\theoremstyle{remark}
\newtheorem{remark}[thm]{Remark}

\theoremstyle{definition}
\newtheorem{definition}[thm]{Definition}

\newcommand{\CC}{\mathbb{C}}
\newcommand{\RR}{\mathbb{R}}

\newcommand{\eps}{\varepsilon}

\newcommand{\cM}{\mathcal{M}}
\newcommand{\cD}{\mathcal{D}}
\newcommand{\bH}{\mathbf{H}}
\newcommand{\bx}{\mathbf{x}}

\newcommand{\rd}{\mathrm{d}}

\newcommand{\bOh}{\mathbf{0}}

\numberwithin{equation}{section}
\numberwithin{thm}{section}

\def\rd{\mathrm{d}}

\begin{document}

\title[Neck pinches along the Lagrangian mean curvature flow]{Neck pinches along the Lagrangian mean curvature flow of surfaces}

\author{Jason D. Lotay}
\address{Mathematical Institute, University of Oxford, Oxford OX2 6GG, United Kingdom.}  \email{jason.lotay@maths.ox.ac.uk}

\author{Felix Schulze}
\address{Mathematics Institute,
  University of Warwick,
	Coventry CV4 7AL,
	United Kingdom }
\email{felix.schulze@warwick.ac.uk}
      
\author{G\'abor Sz\'ekelyhidi}
\address{Department of Mathematics, Northwestern University, Evanston IL 60208, USA}
\email{gaborsz@northwestern.edu}

\date{\today}

\begin{abstract}
  Let $L_t$ be a zero Maslov, rational Lagrangian mean curvature flow
  in a compact
  Calabi--Yau surface, and suppose that at the first
  singular time a tangent flow is given by the static union of two transverse
  planes. We show that in this case the tangent flow is unique, and
  that the flow can be continued past the singularity as an immersed,  smooth, zero Maslov, rational Lagrangian mean curvature
  flow. Furthermore, if $L_0$ is a sphere that is stable in the sense
  of Thomas--Yau, then such a singularity cannot form.
\end{abstract}

\maketitle

\section{Introduction}
The question of the existence of special Lagrangian submanifolds is an
important problem in complex and symplectic geometry. Special Lagrangians play a central role in the
Strominger--Yau--Zaslow conjecture~\cite{SYZ96} on mirror symmetry, and
are of interest in the variational problem of finding area-minimizing Lagrangians, studied extensively by
Schoen--Wolfson~\cite{SW01}. Smoczyk~\cite{Smo96} showed that the mean
curvature flow preserves the class of Lagrangian submanifolds in
Calabi--Yau manifolds, and so a natural expectation is that a suitable Lagrangian
 can be deformed into a special Lagrangian 
using the flow. The Thomas--Yau conjecture~\cite{TY02},
motivated by mirror symmetry~\cite{Thomas01}, predicts that this is
indeed the case, assuming that the initial Lagrangian 
satisfies a certain stability condition. More recently
Joyce~\cite{Joyce15} formulated a detailed conjectural picture 
relating singularity formation along the Lagrangian mean curvature
flow to Bridgeland stability conditions on Fukaya categories.

It was shown by Neves~\cite{Neves:singularities} that singularities
are, in a sense, unavoidable along the Lagrangian mean curvature flow, even if the
initial Lagrangian is a small Hamiltonian perturbation of a special
Lagrangian. At the same time, Neves~\cite{Neves:zero-maslov} shows
that for the flow of zero Maslov Lagrangians any tangent
flow at a singular point is a union of special Lagrangian cones. This
means that Type I singularities -- which are typically easier to
analyse -- do not exist. In this paper we study the simplest kind of
singularities, called neck pinches in \cite[Conjecture
3.16]{Joyce15}, in the two-dimensional case. Our main
result is the following, which we state in the setting of a compact ambient Calabi--Yau surface, though it also works in $\CC^2$.  Note that here, and throughout, we allow our Lagrangians to be immersed, which is important in the context of Lagrangian mean curvature flow.

\begin{thm}\label{thm:intro1}
  Let $X$ be a  compact
   Calabi--Yau surface, and $L\subset X$ a zero
  Maslov, rational Lagrangian. Let $L_t$ be the mean curvature
  flow starting from $L$ for $t\in [0,T)$, where $T$ is the first finite
  singular time. Let $(\bx_T, T)$ be a singular point, and suppose that
  a tangent flow at $(\bx_T, T)$ is given by the transverse union of two
  multiplicity one planes. The tangent flow at $(\bx_T, T)$ is then
  unique.
\end{thm}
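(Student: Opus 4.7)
My plan is to adapt the Lojasiewicz--Simon scheme, as developed by L.~Simon in the elliptic setting and by Schulze for mean curvature flow, to handle the singular self-shrinker $P_1\cup P_2$ given by the pair of transverse Lagrangian planes. First, pass to the parabolically rescaled flow $\tilde{L}_s := e^{s/2}(L_{T-e^{-s}}-\bx_T)$. Huisken's monotonicity formula shows that the Gaussian area (the $F$-functional) of $\tilde{L}_s$ is non-increasing in $s$, with subsequential limits being self-shrinkers; by hypothesis, one such limit is the static shrinker $P_1\cup P_2$ of Gaussian density $2$. Combining the zero Maslov and rationality conditions with Neves' theorem that every tangent flow is a finite union of special Lagrangian cones, I would show that for $s$ large, $\tilde{L}_s$ is close in the Brakke sense to a pair of transverse Lagrangian planes; the rationality and zero Maslov hypotheses force the matched Lagrangian angle to take only discrete values, so the moduli of candidate configurations is finite-dimensional, with degrees of freedom given only by the choice of the two planes.

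The core of the argument is a Lojasiewicz--Simon inequality at $P_1\cup P_2$, of the form
\[
|F(\Sigma)-2|^{1-\theta} \leq C\,\|\nabla F(\Sigma)\|
\]
for some $\theta\in(0,1/2]$, valid for Lagrangian surfaces $\Sigma$ sufficiently close to $P_1\cup P_2$ in a suitable weighted norm. Because $P_1\cup P_2$ has a conical singularity at the origin, the classical smooth Lojasiewicz--Simon theory does not apply directly. The plan is to work in weighted Holder spaces adapted to the conical singularity, carry out a Fredholm analysis of the drift--Laplace-type operator obtained by linearising the shrinker equation at $P_1\cup P_2$, and identify the finite-dimensional obstruction space with infinitesimal deformations of the two planes. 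A delicate point is that Lagrangian perturbations of $P_1\cup P_2$ preserving the matched-angle condition may include Lawlor-neck smoothings concentrating near the origin; handling these requires gluing estimates to reduce the analytic problem to a finite-dimensional ODE-type inequality on the two-plane moduli, on which the Lojasiewicz exponent can be computed explicitly.

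Finally, combining the Lojasiewicz inequality with Huisken's identity $\tfrac{d}{ds}F(\tilde{L}_s) = -\|\nabla F(\tilde{L}_s)\|^2$ and running Simon's classical integration argument produces the bound $\int^\infty \|\partial_s\tilde{L}_s\|\,\rd s <\infty$, which forces uniqueness of the tangent flow. The main obstacle is the Lojasiewicz step: the usual machinery presupposes a smooth critical point, whereas the singularity at the origin of $P_1\cup P_2$ forces both the choice of function space and the linear analysis to be redone from scratch. Transversality of the two planes is essential here, preventing degeneration of the nearby-shrinker moduli and allowing a clean Fredholm theory on weighted spaces; the zero Maslov and rationality conditions are used to rigidify the Lagrangian angle, which is what makes the moduli of candidate limits discrete enough for the Lojasiewicz-type dichotomy to produce uniqueness rather than merely convergence up to a family.
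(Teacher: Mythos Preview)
Your proposal follows the classical Lojasiewicz--Simon route, but the paper explicitly takes a different path, and your plan has a substantive gap precisely where the paper invests most of its effort.

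The central difficulty is not only that $P_1\cup P_2$ is singular at the origin, but that the space of nearby self-shrinkers---all unions $P_1'\cup P_2'$ of two Lagrangian planes---contains pairs whose Lagrangian angles \emph{differ}. These are genuine critical points of $F$ with $F=2$, so an inequality $|F(\Sigma)-2|^{1-\theta}\leq C\|\nabla F(\Sigma)\|$ cannot hold in any neighbourhood of $P_1\cup P_2$. One must work modulo the full moduli of pairs of planes, but then the uniqueness iteration breaks: the ``closest shrinker'' $V'$ at each step may have $\theta_{P_1'}\neq\theta_{P_2'}$, and such a $V'$ is not a legitimate tangent flow (by Neves), so one cannot simply track decay toward it. Your claim that rationality and zero Maslov force the matched angle to be discrete is incorrect: rationality constrains the periods of the Liouville form $\lambda$, not the Lagrangian angle, and the angle $\theta_V$ of a limiting pair of planes can be any real number. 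The discreteness you want does not come for free, and your plan gives no mechanism to rule out drift along the non-special-Lagrangian directions of the shrinker moduli.

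The paper avoids Lojasiewicz--Simon entirely. It introduces a distance $D_V$ combining Gaussian $L^2$-distance and angle defect, and exploits hyperk\"ahler-rotated coordinates with $V=\{zw=0\}$: the function $|zw|^2$ satisfies a differential inequality (Lemma~\ref{lem:evolzw}) yielding non-concentration near the singular point. Graphicality is propagated outward (Proposition~\ref{prop:graphical}), giving an excess estimate $|\mathcal{A}(M_\tau)|\leq D_V(M_{\tau-1})^{1+\alpha_1}$ (Proposition~\ref{prop:Aest}). A three-annulus lemma (Proposition~\ref{prop:3ann2}) replaces the spectral/Fredholm analysis. The non-integrable directions---pairs with unequal angles---are handled not by a Lojasiewicz exponent but by a quantitative version of Neves's argument: if the flow were close to such a pair, connectedness of $M_\tau$ in $B_1$ (Condition~\eqref{cond:ast}) forces a contradiction via the primitive $\beta$ of the Liouville form (alternative~(ii) in Proposition~\ref{prop:decay10}). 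This is where rationality is actually used: it guarantees local exactness, so that $\beta$ is single-valued. In short, the obstacle you flag (the singularity at the origin) is real, but the more serious one---the non-special-Lagrangian shrinker deformations---is handled by the paper through a mechanism your plan does not supply.
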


  In Theorem~\ref{thm:intro1} the assumption is that one
tangent flow is given by a union of multiplicity one transverse planes $P_1\cup P_2$,
with corresponding Lagrangian angles $\theta_1, \theta_2$. We note
here that by Neves~\cite[Corollary 4.3]{Neves:survey} the flow cannot
form a singularity unless
$\theta_1=\theta_2$. Therefore throughout the article we
will only be concerned with the case when $P_1$ and $P_2$ have the
same Lagrangian angle.

The uniqueness of tangent flows is a fundamental problem for analysing
the singularities of mean
curvature flow, and there have been several important results in this
direction recently~\cite{Schulze14, CM:uniqueness, CS21}. A major new
difficulty in Theorem~\ref{thm:intro1} is that it is the first example
of uniqueness for a tangent flow that is singular. The proof crucially
exploits several aspects of the Lagrangian setting, and does not apply
in the general setting of mean curvature flow.

Theorem~\ref{thm:intro1} allows us to analyse the behavior of the flow 
at the singularity. First, we have the following, showing that the
flow can be continued past the singular time if all singularities at time $T$ are modelled on two transverse, multiplicity one planes. Recall that the grading of a zero Maslov Lagrangian corresponds to a global choice of function representing the Lagrangian angle, see Definition \ref{dfn:ZM}.

\begin{thm}\label{thm:intro2}
  Suppose that $X, L$ are as in Theorem~\ref{thm:intro1} and assume that at each singular point $(\mathbf{x}, T)$ a tangent flow is a static union of two multiplicity one, transverse planes. Then $L_t$ converges to an immersed Lagrangian $C^1$-submanifold $L_T$ in the sense of
    currents as $t\to T$, and the flow can be restarted as a smooth, zero Maslov, rational Lagrangian mean curvature flow with initial
    condition $L_T$. Furthermore, the extended flow is smooth (together with its grading) through the singular time, away from the singular points. 
  \end{thm}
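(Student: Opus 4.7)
The plan breaks the theorem into three tasks: construct the limit $L_T$ with the right regularity, restart the Lagrangian mean curvature flow from it, and verify smoothness across $t=T$ away from singular points.

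First I would show that $L_t$ converges to a limit $L_T$ which is a smooth immersed Lagrangian away from a finite set of transverse double points. By the assumption that at every singular point at time $T$ the tangent flow is the static union of two transverse planes, together with Brakke--White $\varepsilon$-regularity, singular points must be isolated and hence finite by compactness. On the regular part, $\varepsilon$-regularity gives smooth convergence of $L_t$ to a smooth Lagrangian. Near each singular point, Theorem~\ref{thm:intro1} provides uniqueness of the tangent flow; this needs to be upgraded to a quantitative rate (via a {\L}ojasiewicz--Simon-type inequality, presumably available from the proof of Theorem~\ref{thm:intro1}) in order to conclude $C^1$-convergence of $L_t$ to a pair of smooth sheets meeting transversely. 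Together these give $L_t\to L_T$ in the sense of currents and establish the claimed immersed $C^1$-regularity of $L_T$.

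Next I would restart the flow from $L_T$. Since mean curvature flow is an intrinsic PDE on the parametrising immersion and does not see self-intersections, and since each sheet of $L_T$ is smooth, one obtains a smooth short-time Lagrangian mean curvature flow $\tilde L_t$ for $t\in[T,T+\varepsilon)$ from $L_T$ by standard parabolic theory, in the vein of Smoczyk's foundational results. The zero Maslov and rational conditions are preserved by the flow because they are essentially cohomological, and the grading extends continuously across $t=T$ on each smooth sheet, using that the two sheets at each transverse self-intersection of $L_T$ share a common Lagrangian angle.

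Finally, for smoothness through the singular time away from the finite singular set, standard interior parabolic regularity applied to the combined flow (the original $L_t$ for $t<T$ together with the restarted $\tilde L_t$ for $t\ge T$) yields smoothness in spacetime, and the grading is smooth as a function of a smooth flow. The main obstacle I anticipate is the quantitative upgrade of tangent-flow uniqueness to genuine $C^1$-convergence at the singular points in the first step: this is the analog of the classical passage from uniqueness of tangent cones to $C^{1,\alpha}$-regularity of minimal surfaces, and will hinge on decay estimates that must be extracted from the Lagrangian structure exploited in the proof of Theorem~\ref{thm:intro1}.
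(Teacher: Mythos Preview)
Your outline is close to the paper's in structure, but there is a genuine gap in the restart step. You assert that ``each sheet of $L_T$ is smooth'' and then invoke standard short-time existence in the style of Smoczyk. The paper does \emph{not} obtain smooth sheets at the singular points: Lemma~\ref{lem:structure-sing-time} shows only that near each singularity $L_T$ is the union of two graphs $\mathrm{graph}_{P_i}(\rd f_i)$ with $f_i\in C^2$, smooth away from the origin. In other words, the limiting immersion is only $C^1$ at the singular points. This is also what one should expect: even a full {\L}ojasiewicz--Simon decay estimate typically yields $C^{1,\alpha}$ regularity of the limit, not $C^\infty$, so there is no route from the decay to smooth sheets. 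Consequently Smoczyk's theorem does not apply, and your step~2 as written does not go through.

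The paper circumvents this by approximating $L_T$ in $C^1$ by smooth, zero Maslov, rational Lagrangians $L^i$ (approximating the $C^2$ potentials $f_i$ by smooth ones), and then using the estimates of Wang~\cite{Wang04} to obtain a \emph{uniform} short-time existence interval for each $L^i_t$, together with interior estimates (see also \cite[Appendix]{BegleyMoore}) that give $C^\infty$ control for $t>0$ uniformly in $i$. Passing to the limit produces the restarted flow from $L_T$. This compactness-and-limit argument is the missing ingredient in your proposal. A smaller point: the paper's uniqueness and decay mechanism is not {\L}ojasiewicz--Simon (the authors explicitly say their approach is different); the $C^1$-graphicality near each singularity comes instead from the decay estimate Proposition~\ref{prop:d30} combined with pseudolocality (Remark~\ref{rem:pseudolocality}), as implemented in Lemma~\ref{lem:structure-sing-time}. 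This is a methodological difference rather than a gap, but it means your anticipated ``main obstacle'' is resolved by a different toolkit than you suggest.
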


A slight extension of the ideas involved in proving uniqueness of the
tangent flow also allows us to show that if the tangent flow is given
by the union of two transverse planes, then close to the singularity the flow looks like the two transverse planes, desingularized by a Lawlor neck which ``pinches off'': see Theorem~\ref{thm:Lawlor}. This has
the following consequence. 

\begin{thm}\label{thm:intro3}
  Suppose that $X,L$ are as in Theorem~\ref{thm:intro1}. 
  For $t < T$ sufficiently close to the
  singular time we can write $L_t$ as a graded self-connected sum of an immersed Lagrangian $M$ at a self-intersection point.  
  
If $M$ is not connected, then we can write it is a graded connected sum $M = M_1
  \# M_2$ and the following holds: 
  \begin{equation}\label{eq:stab1}
    \mathrm{vol}(L) > \left| \int_{M_1} \Omega \right| +
    \left|\int_{M_2} \Omega\right|, 
\end{equation}
where $\Omega$ is the holomorphic volume form on $X$. 
If in addition $L$ is almost calibrated, then we also have
\begin{equation}\label{eq:stab2}
  \phi(M_1), \phi(M_2) \subset (\inf_L \theta, \sup_L
    \theta), 
 \end{equation}
where $\phi(M_i)$ is the ``cohomological'' Lagrangian angle of
$M_i$ defined in \eqref{eq:phiMdefn}.
(See Section~\ref{sec:compactCY} for detailed definitions.) 
\end{thm}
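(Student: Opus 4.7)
The plan is to build $M$ explicitly using the asymptotic description of $L_t$ near each singular point provided by Theorem~\ref{thm:Lawlor}, and then to combine three ingredients: strict monotonicity of area along a non-trivial Lagrangian mean curvature flow, the standard calibration bound for Lagrangians, and the parabolic maximum principle applied to the Lagrangian angle.

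First I would invoke Theorem~\ref{thm:Lawlor} to conclude that, for $t < T$ sufficiently close to $T$, inside a small ball around each singular point $\bx_T$ the surface $L_t$ is smoothly close to a rescaled Lawlor neck $N_{s(t)}$ with $s(t)\to 0$, asymptotic to the transverse pair $P_1\cup P_2$ from the tangent flow; outside these balls $L_t$ converges smoothly to $L_T$ by Theorem~\ref{thm:intro2}. I would then define $M$ by removing, in each neck ball, the Lawlor-neck piece of $L_t$ and gluing in the pair of transverse planes meeting at the self-intersection point $\bx_T$. The resulting $M$ is an immersed, graded, rational Lagrangian with self-intersection points: the grading extends across each self-intersection because $P_1$ and $P_2$ share a common Lagrangian angle. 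By construction $L_t$ is then the graded self-connected sum of $M$ at these self-intersections, with the Lawlor necks playing the role of connecting necks. If $M$ is disconnected, grouping its connected components into two collections yields a graded connected sum $M = M_1 \# M_2$.

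For \eqref{eq:stab1} I would combine three inequalities. Strict monotonicity of area under mean curvature flow gives $\mathrm{vol}(L) > \mathrm{vol}(L_t)$ for every $t\in(0,T)$, since the flow cannot be stationary in view of the imminent singularity at $T$. A direct area comparison in each neck ball, using that a Lawlor neck has strictly larger area than its asymptotic pair of transverse planes in any sufficiently large compact region, yields $\mathrm{vol}(L_t) > \mathrm{vol}(M) = \mathrm{vol}(M_1)+\mathrm{vol}(M_2)$. Finally, the pointwise identity $\Omega\vert_{L'} = e^{i\theta}\, d\mathrm{vol}_{L'}$ on any Lagrangian gives the calibration bound $\mathrm{vol}(M_i) \ge \left|\int_{M_i}\Omega\right|$, and chaining these yields \eqref{eq:stab1}.

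For \eqref{eq:stab2} in the almost calibrated setting, I would use that $\theta$ is a real-valued function on $L_t$ satisfying the heat equation $\partial_t\theta=\Delta_{L_t}\theta$, so by the strong parabolic maximum principle $\theta_{L_t}\in(\inf_L\theta,\sup_L\theta)$ strictly for every $t\in(0,T)$. On $M_i$ the angle agrees with $\theta_{L_t}$ outside the neck balls and equals the common value $\theta_1=\theta_2$ inside, and this value lies in $(\inf_L\theta,\sup_L\theta)$ since it is a limit of values of $\theta_{L_t}$. A convex-combination argument then shows that $\phi(M_i)$---which, with the canonical lift provided by the grading, is the argument of $\int_{M_i}\Omega=\int_{M_i}e^{i\theta_{M_i}}\,d\mathrm{vol}$---lies in the same open interval, giving \eqref{eq:stab2}. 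The main obstacles I anticipate are the strict area comparison, requiring quantitative control of the Lawlor-neck area excess over a well-chosen compact region, and the careful tracking of the grading through the surgery so that $\phi(M_i)$ is unambiguously a real number rather than an element of $\RR/2\pi\ZZ$; the latter is made possible by the zero-Maslov and rationality hypotheses together with the fact that the two planes of the tangent flow carry the same Lagrangian angle.
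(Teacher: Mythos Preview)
Your overall strategy matches the paper's: invoke Theorem~\ref{thm:Lawlor}, perform surgery replacing the Lawlor neck by the pair of transverse planes to exhibit $L_t$ as a graded self-connected sum, and then derive \eqref{eq:stab1} from area monotonicity plus the calibration bound and \eqref{eq:stab2} from the strong maximum principle for $\theta$. Your argument for \eqref{eq:stab2} coincides with the paper's.

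There is, however, a genuine error in your derivation of \eqref{eq:stab1}. The claimed inequality $\mathrm{vol}(L_t)>\mathrm{vol}(M)$ is false: a Lawlor neck has \emph{less} area than its asymptotic planes in any ball $B_R$, not more. Both are calibrated (holomorphic after hyperk\"ahler rotation), and a direct computation for $\{zw=1\}$ gives area $2\pi\sqrt{R^4-4}$ in $B_R$, versus $2\pi R^2$ for the two planes. So your chain of strict inequalities breaks at the second link.

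The paper avoids this by not comparing $\mathrm{vol}(L_t)$ and $\mathrm{vol}(M)$ directly. Instead it observes that the surgery is confined to a ball of radius $O(r(t))$ with $r(t)\to 0$, so by the area ratio bounds $|\mathrm{vol}(L_t)-\mathrm{vol}(M)|\to 0$ as $t\to T$. Meanwhile $\mathrm{vol}(L)-\mathrm{vol}(L_t)\geq \int_0^{T/2}\!\int_{L_s}|\mathbf{H}|^2>0$ is bounded below by a fixed positive constant independent of $t$ (the flow is nonstationary since a singularity forms). Combining the two gives $\mathrm{vol}(L)>\mathrm{vol}(M_1)+\mathrm{vol}(M_2)$ for $t$ close enough to $T$, and then your calibration bound $\mathrm{vol}(M_i)\geq\big|\int_{M_i}\Omega\big|$ finishes the proof of \eqref{eq:stab1}. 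The same ``small surgery region'' mechanism is what makes the angle inequalities $\inf_L\theta<\inf_{M_i}\theta$ and $\sup_{M_i}\theta<\sup_L\theta$ strict, since on the glued-in planes $\theta$ is close to $\theta_{V_0}$, a limiting value of $\theta$ along the flow.
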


This result provides some evidence for Thomas--Yau's Conjecture 7.3 in 
\cite{TY02}. Indeed, their conjecture states that if the flow has a
finite time singularity, then $L$ can be
decomposed into a graded connected sum $M_1\# M_2$ satisfying the
conditions in \eqref{eq:stab1}.  Our result shows that this is one of the possible scenarios when the tangent flow at the first singular time is given by two transverse planes.  In particular, the decomposition as a graded connect sum is guaranteed if $L$ is a sphere.  
Note that Joyce's conjectural picture~\cite{Joyce15} predicts that other
singularities could still form, notably those with tangent flows
given by two static planes meeting along a line. It is an
important problem to understand what we can say about the flow in the
presence of such singularities and some progress towards this was made
in the authors' previous work~\cite{LSS22}. An optimistic expectation is
that for a generic initial surface, the only tangent flows that appear
at singularities are of these two types, i.e.~two multiplicity one
planes meeting either at a point or along a line.

\subsection{Outline}
To conclude this introduction we give a brief outline of the contents
of the paper, along with some of the main ingredients of the
proofs. In most of the paper we will consider the flow in $\CC^2$
and we will only discuss the necessary changes in the case of a compact
ambient space in Section~\ref{sec:compactCY}.

The technical heart of our results is an analysis of rescaled Lagrangian mean
curvature flows $M_\tau \subset\CC^2$ which are close to the union
$V = P_1\cup P_2$ of two transverse
planes on a time interval $t\in [0,1]$, say. We introduce a distance
function $D_V(M_{\tau})$ from $V$ to $M_{\tau}$, which incorporates both an
$L^2$-type distance, as well as the difference in Lagrangian angles. (See
Definition~\ref{dfn:IVDV} for the precise definition.) Although the
non-compactness introduces some technical difficulties, the main
difficulty when compared to earlier studies is the presence of the
singularity at the origin. The first step for dealing with this is to
observe that the function $|zw|^2$ satisfies a useful differential
inequality along the flow: here $z, w$ are complex coordinates, for a
suitable hyperk\"ahler-rotated complex structure, such that $V =
\{zw=0\}$. The differential inequality is exploited in
Lemma~\ref{lem:IVest}, allowing us to convert bounds on
$D_V(M_\tau)$ to pointwise distance bounds at a later
time. 

In general the knowledge that $M_\tau$ is close to $V$ in the Hausdorff
sense does not imply graphicality of $M_\tau$ over $V$, even on regions
away from the origin, because of possible multiplicity. 
In Proposition~\ref{prop:graphical} we show that good graphicality of
$M_\tau$ over $V$ on a fixed annulus $B_2\setminus B_1$ can be propagated
out to larger annular regions $B_R\setminus B_r$ at later times, in the presence of
our pointwise distance bounds. This graphicality estimate is then used in
Proposition~\ref{prop:Aest} to derive a crucial estimate
$$ |\mathcal{A}(M_\tau)| \leq D_V(M_{\tau-1})^{1+\alpha_1} $$
for the excess $\mathcal{A}$ (defined in \eqref{eq:AMdefn})  in terms
of the distance, where $\alpha_1>0$.

Our next task, in Sections~\ref{sec:heateqnlimits} and
\ref{sec:threeann}, is to derive a three-annulus type lemma for the
distance function $D_V$. The main result is
Proposition~\ref{prop:3ann2}, and the proof relies on an analysis of
solutions of the drift heat equation on a plane with some
mild singularities at the origin, together with the non-concentration estimates in
Lemma~\ref{lem:IVest}.

The technical heart of the paper is the proof of the decay estimate,
Proposition~\ref{prop:decay10}, for the distance function. Given a three-annulus lemma as in
Proposition~\ref{prop:3ann2}, the usual strategy for controlling
the flow $M_\tau$ is to show that at each scale the flow must decay
towards the ``best fit'' cone of the form $V' = P_1' \cup P_2'$. The
difficulty is that our estimates only apply when $V'$ is special
Lagrangian, i.e.~the planes $P_1'$ and $P_2'$ have the same Lagrangian
angle. Since in general these angles might be different, the situation
is somewhat reminiscent of the case of non-integrable tangent flows,
which is typically dealt with using the Lojasiewicz--Simon
inequality~\cite{Simon.asympt}. Our approach is quite different from this,
and can be thought of as a quantitative version of Neves's
result~\cite[Corollary 4.3]{Neves:survey} stating that at a
singularity the tangent flow cannot be the union of two planes with
different Lagrangian angles. These considerations lead to the alternative (ii) in
Proposition~\ref{prop:decay10}. 

We give the proofs of the main applications in
Section \ref{sec:neckpinch}.  Given the decay estimate in Proposition~\ref{prop:decay10}, the
uniqueness result, Theorem~\ref{thm:intro1}, follows standard
arguments. The proof of the existence of a $C^1$ limiting surface
$L_T$ in Theorem~\ref{thm:intro2} is similar to
\cite[Corollary 1.2]{CS21}. We can then restart the flow
using the approach of Wang~\cite{Wang04}. 

The main ingredient for proving Theorem~\ref{thm:intro3} is to show that one can find
small Lawlor necks, i.e.~surfaces of the form $\{zw=\pm\epsilon\}$,
in the presence of a tangent flow $V$ given by two transverse planes. This
relies on showing that if between different scales the flow stays
close to possibly moving pairs of planes, then these planes have to
stay very close to each other, similarly to how the uniqueness of the
tangent flow is proved. A related result was shown by
Edelen~\cite[Theorem 13.1]{Edelen21} in the context of minimal
hypersurfaces.

Finally in Section~\ref{sec:compactCY} we will discuss the changes needed
when working in a compact ambient space $X$ instead of in $\CC^2$.
We follow the approach found
for instance in White~\cite[Section 4]{White.regularity},
isometrically embedding $X\subset \mathbb{R}^N$, and writing the
(rescaled) mean curvature flow in $X$ as a (rescaled) mean curvature
flow in $\mathbb{R}^N$ with an additional forcing term. The
quantities, such as $|zw|$, that we used in $\CC^2$ can be defined by
projecting to the tangent space $T_pX$ at the point $p$ where the
singularity forms. Along the
rescaled flow this projection as well as the forcing term introduces
additional errors when compared to the calculations in $\CC^2$,
however these errors decay exponentially fast and so the geometric
conclusions still hold. 

\subsection{Acknowledgements} We thank Dominic Joyce and Yang Li for
their interest in this work and helpful comments. 
JDL and FS were partially supported by a Leverhulme Trust Research Project Grant RPG-2016-174.  GSz was supported in part by NSF grant DMS-2203218.

\section{Preliminaries}\label{s:prelim}
In this section we introduce various key definitions and notation that we shall require throughout the article. 

\subsection{Lagrangians in \texorpdfstring{$\CC^2$}{C2}}  We first
recall some basic definitions concerning Lagrangian submanifolds in
$\CC^2$. 

\begin{definition} 
\label{dfn:ZM}
An oriented Lagrangian submanifold $L$  in $\CC^2$ is \emph{zero Maslov} if there exists a function $\theta$ on $L$ (called the \emph{Lagrangian angle}) so that
\begin{equation*}
\bH=J\nabla\theta,
\end{equation*}
where $\bH$ is the mean curvature vector of $L$ and $J$ is the complex structure on $\CC^2$. The choice of function $\theta$ is called a \emph{grading} of $L$. We further say that $L$ is \emph{almost calibrated} if $\theta$ can be chosen so that, for some $\epsilon>0$, 
\begin{equation*}
\sup\theta-\inf\theta\leq \pi-\epsilon.
\end{equation*}
\end{definition}

\begin{definition} 
\label{dfn:exact}
An oriented Lagrangian $L$ in $\CC^2$ is \emph{exact} if there exists a function $\beta$ on $L$ so that
\begin{equation*}
J\bx^{\perp}=\nabla\beta,
\end{equation*}
where $\bx^{\perp}$ is the normal projection of the position vector $\bx\in\CC^2$.  Equivalently,
\[
\rd\beta=\lambda|_L,
\]
where $\lambda$ is the Liouville form on $\CC^2$, which is a 1-form on
$\CC^2$ so that $\frac{1}{2}\lambda$ is a primitive for the K\"ahler
form $\omega$ on $\CC^2$.
The Lagrangian $L$ is \emph{rational}  if the set $\lambda(H_1(L,
\mathbb{Z}))$ is discrete in $\mathbb{R}$.  Note that exact Lagrangians are rational. 
\end{definition}

\subsection{Lagrangian mean curvature flow}
In most of this article we will consider a smooth, zero Maslov solution to
Lagrangian mean curvature flow (LMCF) 
$$[0, T)\ni t\mapsto L_t\subset\CC^2$$
which evolves with normal speed given by $\bH$. See
Section~\ref{sec:compactCY} for the setting of a compact ambient
space.

Throughout we will assume that the Lagrangian angle $\theta$ of $L_t$ along the
flow is uniformly bounded: $|\theta| < C_0$. In addition we assume that
 $L_t$ has uniformly
 bounded area ratios, i.e.~there exists $C_1>0$ such that
 $$\sup_{\bx,t}
   \mathcal{H}^2(L_t\cap B(\bx,r)) \leq C_1 r^2 \text{ for all } r>0,$$
 where
 $B(\bx,r)$ is the Euclidean ball of radius $r$ about $\bx\in\CC^2$. 
 We call
 $$\cM:=\{L_t\times \{t\}\, |\, t \in [0,T)\} \subset \CC^2\times \RR$$
 the spacetime track of the flow, and write $\cM(t)= L_t\, .$

It will  be useful to perform parabolic rescalings of our flows, so we shall introduce the following notation.
\begin{definition} 
\label{dfn:parabolic}
For $\lambda>0$ we shall denote the \emph{parabolic rescaling}
$$\cD_{\lambda}:\CC^2\times \RR \to \CC^2\times \RR, (\bx, t) \mapsto (\lambda \bx, \lambda^2t)\, .$$
Note that for a (Lagrangian) mean curvature flow $\cM$, it holds that
$\cD_{\lambda}\cM$
is again a (Lagrangian) mean curvature flow.
\end{definition}

We recall Huisken's monotonicity formula~\cite{Huisken}:
\begin{equation}\label{eq:Huisken}
\begin{split}
 \frac{d}{dt} \int_{L_t} f \rho_{\bx_0, t_0}\, d\mathcal{H}^2 &= 
    \int_{L_t} (\partial_t f - \Delta f) \rho_{\bx_0, t_0}\,
    d\mathcal{H}^2\\
    &\qquad  - \int_{L_t} f\,  \bigg|\, \bH - \frac{(\bx-\bx_0)^\perp}{2 (t_0-t)}\bigg|^2 \rho_{\bx_0, t_0}\,
    d\mathcal{H}^2\, ,  
    \end{split}
\end{equation}
for $t<t_0$, where $f$ is a function on $L_t$ with polynomial growth (locally uniform in $t$), and
$$ \rho_{\bx_0, t_0}(\bx,t) = (4\pi(t_0-t))^{-1} \exp\left( -\frac{
      |\bx-\bx_0|^2}{4(t_0-t)}\right) $$
is the backwards heat kernel. The  \emph{entropy} $\lambda(L)$ defined by Colding-Minicozzi
\cite{ColdingMinicozzi:generic} is given by
 $$
  \lambda(L)=\sup_{\mathbf{x}_0\in\CC^2,\,r>0}\frac{1}{4\pi
    r}\int_L
  e^{-\frac{|\mathbf{x}-\mathbf{x}_0|^2}{4r}}\rd\mathcal{H}^2. 
 $$
By virtue of Huisken's monotonicity formula, $t\mapsto \lambda(L_t)$ is non-increasing along any $2$-dimensional mean curvature flow in $\CC^2$.

We will be studying the behaviour of the flow close to a singularity at $(\mathbf{x}_0, T)$. It is convenient to shift the flow in space-time such that $(\mathbf{x}_0, T)$ is the origin $(\mathbf{0},0)$, i.e.~we consider instead the flow
$$ \tilde \cM : =\cM - (\mathbf{x}_0, T)\, ,$$
defined for $t\in [-T,0)$. For ease of notation we will drop the
tilde in the following.

A tangent flow at $(\mathbf{0}, 0)$ is defined to be any weak limit of
a sequence of rescalings $\mathcal{D}_{\lambda_k}\mathcal{M}$, with
$\lambda_k \to \infty$. According to the structure theorem due to
Neves~\cite[Theorem A]{Neves:zero-maslov}, in our setting of a zero
Maslov flow with bounded Lagrangian angle in two dimensions, the tangent
flows are all given by unions of Lagrangian planes with
multiplicities. In addition the Lagrangian angle $\theta$ along the
sequence of rescalings converges in a suitable sense to the angles of
the planes. We recall that the assumptions of Theorem~\ref{thm:intro1}  mean that one
tangent flow at $(\mathbf{0}, 0)$ is given by a union of multiplicity one transverse planes $P_1\cup P_2$ with the same Lagrangian angle.

It turns out to be helpful to consider a further rescaling, which turns self-similarly shrinking solutions into static points of the flow.
\begin{definition} 
\label{dfn:rescaled.flow}
The \emph{rescaled flow} is 
$$ [- \log(T), +\infty) \ni \tau \mapsto M_\tau := e^\frac{\tau}{2} \cM(-e^{-\tau})= e^\frac{\tau}{2} L_{-e^{-\tau}}, $$
which evolves with normal speed
\begin{equation}\label{eq:rescaled.flow}
 \bH + \frac{\bx^\perp}{2}\, .
\end{equation}
\end{definition}
In terms of the rescaled flow the tangent flows of $\mathcal{M}$ can be studied by
taking limits of sequences $M_{\tau_k}$ as $\tau_k\to \infty$. 

\subsection{Set-up}\label{sec:setup} 
For the majority of this article we will consider a rescaled Lagrangian mean curvature flow
$M_\tau$ in $\mathbb{C}^2$, for $\tau \in [T_0, T_1]$, close in a
suitable sense to the static flow of the transverse union of two
planes. The flow is assumed to have uniformly bounded Lagrangian angle
and area ratios as above. Two additional conditions will
play an important role, the first of which involves the following key quantity.

\begin{definition}\label{dfn:excess}
Let $M$ be a graded Lagrangian in $\mathbb{C}^2$.  We define the \emph{excess} of $M$ to be 
    \begin{equation}\label{eq:AMdefn}
  \mathcal{A}(M) = \int_M e^{-|\mathbf{x}|^2/4} - 2\int_{\mathbb{R}^2} e^{-|\mathbf{x}|^2/4} +
  \inf_{\theta_0} \int_M |\theta - \theta_0|^2 e^{-|\mathbf{x}|^2/4}.
\end{equation}
Note that we allow
  $\mathcal{A}$ to be negative.  We also observe that $\mathcal{A}$ is monotonically decreasing along the rescaled   flow by Huisken's montonicity formula, since it is an infimum of a family of decreasing functions.
\end{definition}

In the conditions below and throughout we let $B_R(\bx)$ denote the ball of radius $R$ about $\bx\in\mathbb{C}^2$ and let $B_R=B_R(\bOh)$.

\smallskip

\paragraph{\bf Condition (\ddag)}
 We assume that there is a small $c_0>0$ (to be chosen later) so that 
 \begin{equation}
 \tag{\ddag} 
 \label{cond:ddag}
 \mathcal{A}(M_{T_0}) - \mathcal{A}(M_{T_1}) < c_0.
 \end{equation}

\smallskip 

\paragraph{\bf Condition (\textasteriskcentered{})}
We assume that 
$M$ is exact in $B_1$, i.e.~the Liouville form
$\lambda$ satisfies
\begin{equation} \tag{$\ast$} \label{cond:ast}
 \int_\gamma \lambda = 0
\end{equation}
for every closed  loop
$\gamma\subset M\cap B_1$. In addition we assume $M$ is connected in $B_1$,
i.e.~$M\cap B_1$ cannot be written as the union of two nonempty submanifolds.

\smallskip

\noindent We shall see later that  Conditions \eqref{cond:ddag} and \eqref{cond:ast} hold along the rescaled flow when our original Lagrangian mean curvature flow develops a finite time singularity with tangent flow given by a special Lagrangian union of two transverse planes: see Lemma~\ref{lem:Conditions}.

\section{Distance to planes}\label{sec:disttoplanes}

We will now suppose that the flow $M_\tau$, defined for  $\tau\in[T_0,T_1]$, is close to a special
Lagrangian transverse union 
$V_0 = P_{0,1}\cup P_{0,2}$ of
planes through $\bOh$ in
$\mathbb{C}^2$, in a way that will be specified later. 

\subsection{Nearby pairs of planes and complex lines}
 It will be useful to restrict the set of pairs of planes we are considering to be those sufficiently close to $V_0$ in the following sense.

\begin{definition}\label{dfn:V}
Fix a small number $c_1 > 0$ and let
$\mathcal{V}$ be the space of all special Lagrangian unions $V = P_1
\cup P_2$ of two
planes through the origin, such that the angles between $P_i$ and
$P_{0,i}$ are smaller than $c_1$, i.e.~the cones in $\mathcal{V}$ are
small deformations of $V_0$. We further assume that $c_1$ is chosen sufficiently small  so that elements $P_1\cup P_2$ in $\mathcal{V}$ are
bounded away from the multiplicity two planes, i.e.~there exists $c_2>0$ so the angle between
$P_1$ and $P_2$ is at least $c_2$. Below we may further shrink $c_1$,
however it will always be a constant that depends on the choice of
$V_0$ only. 

For any $V\in \mathcal{V}$ we denote the Lagrangian angle of $V$ by
$\theta_V$.  We also let $\mathcal{V}'\subset\mathcal{V}$ be defined in  the
same way as $\mathcal{V}$ but using the constant $c_1/2$ instead of $c_1$.
\end{definition}

 We can choose a
hyperk\"ahler rotation of the complex structure of $\mathbb{C}^2$
depending on $\theta_V$,  and
complex coordinates $z,w$, such that $V$ is defined by $zw=0$. We have
the following basic observation, where the notion of Lagrangian and
Lagrangian angle refers to the original Calabi--Yau structure on
$\mathbb{C}^2$. 

\begin{lemma}\label{lem:nablazw}
  There is a constant $C > 0$, independent of $V\in \mathcal{V}$, such that on any oriented
  Lagrangian $2$-plane $P$ in $\mathbb{C}^2$ with Lagrangian angle $\theta$ we have
  $$ |\nabla z\cdot \nabla w| \leq C|\theta - \theta_V|. $$
\end{lemma}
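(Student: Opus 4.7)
The plan is to reduce the inequality to a Hadamard-type estimate on the compact Lagrangian Grassmannian. The key algebraic fact is that $f(P) := \nabla z\cdot \nabla w$ vanishes on every $J_V$-invariant $2$-plane $P$, where $J_V$ denotes the hyperk\"ahler-rotated complex structure on $\CC^2$ for which $(z,w)$ are holomorphic. To verify this, choose an orthonormal basis of $P$ of the form $\{e_1, e_2 = \pm J_V e_1\}$. Since $z,w$ are $J_V$-holomorphic, $e_2 z = \pm i\, e_1 z$ and $e_2 w = \pm i\, e_1 w$, so
\[
\nabla z \cdot \nabla w = (e_1 z)(e_1 w) + (e_2 z)(e_2 w) = (e_1 z)(e_1 w) + (\pm i)^2 (e_1 z)(e_1 w) = 0.
\]

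Next I would identify which oriented Lagrangian planes (for the original complex structure $J$) are $J_V$-invariant. A short computation with the hyperk\"ahler rotation shows that the positively-oriented $J_V$-complex lines are precisely the $J$-Lagrangian planes with Lagrangian angle $\theta_V$, while the negatively-oriented ones correspond to angle $\theta_V + \pi$. Hence the zero set of the smooth function $f$ on the Lagrangian Grassmannian $LGr = U(2)/O(2)$ contains the compact, codimension-one level set $\theta^{-1}(\theta_V)\cong S^2$. Since $d\theta \neq 0$ on $LGr$, Hadamard's lemma yields a smooth function $h$ on a tubular neighborhood of $\theta^{-1}(\theta_V)$ with $f = (\theta - \theta_V)\, h$, hence $|f| \le \|h\|_\infty |\theta - \theta_V|$ in this neighborhood. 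Outside of it, $|\theta - \theta_V|$ is bounded below while $|f|$ is bounded above by compactness of $LGr$, so the bound persists with a possibly larger constant.

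Finally I would promote the constant to be $V$-independent. The hyperk\"ahler rotation $J_V$ and the coordinates $(z,w)$ can be arranged to depend smoothly on $V$, so $f$, $\theta_V$, and the Hadamard remainder $h$ all vary smoothly with $V$. By Definition~\ref{dfn:V}, the set $\mathcal V$ has compact closure in the space of special Lagrangian transverse unions that are bounded away from multiplicity-two configurations, so compactness supplies a uniform constant $C$ that works for every $V\in\mathcal V$. The main technical point is the existence of a smooth $V$-dependent choice of rotation and coordinates consistent with the normalization $V = \{zw = 0\}$; this is the one obstacle, but it is routine since $V_0$ admits such a choice and the normalization is stable under small deformations on an open neighborhood of $V_0$.
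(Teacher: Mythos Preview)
Your argument is correct and follows essentially the same route as the paper's: show that $\nabla z\cdot\nabla w$ vanishes on the level set $\{\theta=\theta_V\}$ in the Lagrangian Grassmannian, then invoke that $d\theta$ is nowhere vanishing to get the Hadamard-type estimate. The paper verifies the vanishing via the Laplacian identity $2\nabla z\cdot\nabla w=\Delta(zw)-z\Delta w-w\Delta z$ and harmonicity of holomorphic functions on a $J_V$-complex line, whereas you compute directly in a basis $\{e_1,\pm J_V e_1\}$; your treatment of the Hadamard step and the uniformity in $V$ is more explicit than the paper's, but the underlying idea is the same.
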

\begin{proof}
  Let $\mathcal{L}$ denote the space of oriented
  Lagrangian 2-planes in $\mathbb{C}^2$, and let $\theta :
  \mathcal{L}\to S^1$ denote the Lagrangian angle function. Then $\rd\theta$ is
  nowhere vanishing. 

Let $P\in\mathcal{L}$ with $\theta=\theta_V$. After the
  hyperk\"ahler rotation which makes $V$ a pair of complex lines, $P$ also becomes a complex line on which $z, w$ and $zw$
  are all holomorphic, and in particular harmonic. We therefore have
$$ 2\nabla
z\cdot \nabla w = \Delta(zw) - z \Delta (w) - w \Delta(z) = 0.$$
Thus, $\nabla z\cdot\nabla w$ vanishes on the
  zero set of $\theta$. 
  
  Since $\rd\theta$ is nowhere vanishing, i.e.~$\theta-\theta_V$
  vanishes to order 1 along its zero set, the result follows.  
\end{proof}

We also record the following evolution equations.

\begin{lemma}\label{lem:evolzw} We have
\begin{equation}\label{eq:zwheateqn}
    (\partial_t - \Delta) |zw|^2 \leq 4 |zw| |\nabla z \cdot \nabla w|\, .
  \end{equation}
  Furthermore, for $\delta>0$ consider $u_\delta: = \sqrt{ \delta^2 + |\theta -\theta_0|^2}$ and $v_\delta := \sqrt{ \delta^2 + |zw|^2}$. Then
  \begin{equation}\label{eq:uvdelatheateqn}
(\partial_t - \Delta) u_\delta \leq 0\quad \text{and}\quad    (\partial_t - \Delta) v_\delta \leq 2 |\nabla z \cdot \nabla w| \, .
  \end{equation}
\end{lemma}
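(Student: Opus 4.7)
The plan rests on two foundational identities that hold along LMCF: first, for any ambient $C^2$ function $f$ on $\CC^2$ restricted to $L_t$ we have $(\partial_t - \Delta) f = -\operatorname{tr}_L \nabla_{\CC^2}^2 f$, so in particular the linear complex coordinates $z$ and $w$ adapted to $V$ are caloric; and second, by Smoczyk's identity, the Lagrangian angle satisfies $(\partial_t - \Delta) \theta = 0$. (On the rescaled flow both identities pick up harmless first-order drift terms, which are handled by the product and chain rules below in exactly the same way.)

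For \eqref{eq:zwheateqn} I would apply the complex-valued parabolic product rule twice. First, to $zw$:
\begin{equation*}
(\partial_t - \Delta)(zw) = z(\partial_t - \Delta) w + w(\partial_t - \Delta) z - 2 \nabla z \cdot \nabla w = -2 \nabla z \cdot \nabla w.
\end{equation*}
Then to $|zw|^2 = (zw)\overline{(zw)}$ we obtain
\begin{equation*}
(\partial_t - \Delta) |zw|^2 = -4 \operatorname{Re}\bigl[\, \overline{zw} \; \nabla z \cdot \nabla w\, \bigr] - 2 |\nabla (zw)|^2.
\end{equation*}
Discarding the nonpositive dissipation and estimating $|\operatorname{Re}(\overline{zw}\,\nabla z \cdot \nabla w)| \leq |zw|\,|\nabla z \cdot \nabla w|$ yields \eqref{eq:zwheateqn}.

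For $u_\delta$, set $h = (\theta-\theta_0)^2$. Smoczyk's identity together with the product rule yields $(\partial_t - \Delta) h = -2|\nabla \theta|^2$, and the chain rule for $\sqrt{\,\cdot\,}$ gives
\begin{equation*}
(\partial_t - \Delta)\sqrt{\delta^2 + h} = \frac{(\partial_t - \Delta) h}{2\sqrt{\delta^2 + h}} + \frac{|\nabla h|^2}{4(\delta^2 + h)^{3/2}}.
\end{equation*}
Substituting the identity $|\nabla h|^2 = 4 h |\nabla \theta|^2$, the right-hand side collapses to $-\delta^2|\nabla \theta|^2/u_\delta^3 \leq 0$.

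The argument for $v_\delta$ follows the same scheme with $h = |zw|^2$, plus one extra wrinkle. Re-reading the intermediate step in the first part before the dissipation is discarded supplies the stronger inequality $(\partial_t - \Delta) h \leq 4|zw||\nabla z \cdot \nabla w| - 2|\nabla(zw)|^2$. The pointwise Cauchy--Schwarz bound $|\nabla|zw|^2|^2 \leq 4|zw|^2 |\nabla(zw)|^2$ then shows that, after substituting into the square-root chain rule, the sign-indefinite gradient correction is dominated by $|\nabla(zw)|^2/v_\delta$, and is therefore absorbed by the dissipation $-|\nabla(zw)|^2/v_\delta$ contributed by the first summand. The surviving term is $2|zw|\,|\nabla z \cdot \nabla w|/v_\delta \leq 2|\nabla z \cdot \nabla w|$, giving the claim.

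The only genuine obstacle is the bookkeeping needed to observe that, in both square-root calculations, the sign-indefinite gradient correction produced by the chain rule is exactly absorbed by the dissipation produced by the product rule; once this cancellation is spotted, the inequalities are immediate consequences of the two caloric identities quoted at the outset.
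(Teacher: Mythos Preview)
Your proof is correct and follows essentially the same route as the paper: compute $(\partial_t-\Delta)(zw)=-2\nabla z\cdot\nabla w$, then the product rule for $|zw|^2$, then the square-root chain rule together with the Kato-type bound $|\nabla|zw|^2|^2\le 4|zw|^2|\nabla(zw)|^2$ so that the gradient correction is absorbed by the dissipation term. The paper phrases the last step as ``Kato's inequality'' and keeps the exact identity for $(\partial_t-\Delta)|zw|^2$ a line longer before estimating, but the mechanism is identical; your parenthetical about drift terms on the rescaled flow is unnecessary here since the lemma is applied along the unrescaled flow.
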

\begin{proof} We have that $z,w$ satisfy the heat equation, so 
  $$(\partial_t - \Delta) zw = - 2 \nabla z\cdot \nabla w \, .$$
 Thus we can compute
 \begin{equation}\label{eq:evolzw}
\begin{split}
 (\partial_t - \Delta)|zw|^2 &=  (\partial_t - \Delta) (\overline{zw}\, zw)  \\
 &= -2 (\overline{\nabla z\cdot \nabla w}\, zw + \overline{zw}\, \nabla z\cdot \nabla w) - 2 \overline{\nabla (zw)} \cdot \nabla (zw)\\ 
 & \leq 4 |zw| |\nabla z\cdot \nabla w|\, .   
 \end{split}
\end{equation}
For $v_\delta$ we compute using \eqref{eq:evolzw} and Kato's inequality
\begin{equation*}
\begin{split}
 (\partial_t - \Delta)v_\delta &=  \frac{1}{2v_\delta}(\partial_t - \Delta)|zw|^2 + \frac{1}{4v_\delta^3} |\nabla |zw|^2|^2  \\
  &=  -\frac{1}{v_\delta}\left( (\overline{\nabla z\cdot \nabla w}\, zw + \overline{zw}\, \nabla z\cdot \nabla w) + \overline{\nabla (zw)} \cdot \nabla (zw)\right)\\
  &\quad + \frac{|zw|^2}{v^3_\delta}|\nabla |zw||^2\\
  &\leq -\frac{1}{v_\delta}\left( (\overline{\nabla z\cdot \nabla w}\, zw + \overline{zw}\, \nabla z\cdot \nabla w)\right) \leq  2 |\nabla z \cdot \nabla w|\, .
 \end{split}
\end{equation*}
The computation for $u_\delta$ is analogous.
\end{proof}
\subsection{Distance to planes and monotonicity}

We now introduce two notions of distance to a pair of planes $V\in\mathcal{V}$.

\begin{definition}\label{dfn:IVDV}
Given any $V\in\mathcal{V}$, let $d_V$ denote the distance function in $\mathbb{C}^2$ to $V$ and define the 
$L^2$-distance $I_V(M_\tau)$ of $M_\tau$ to $V$ by
$$ I_V(M_\tau)^2 = \int_{M_\tau}
  \Big( |\mathbf{x}|^2 d_V^2  + |\theta - \theta_V|^2 \Big)\, e^{-|\mathbf{x}|^2/4}. $$
 It is convenient to introduce a variant $D_V$ of this, which
also encodes graphicality. Suppose that on the
annulus $B_2\setminus B_1$ the surface $M_\tau$ is
the graph of a (vector-valued) function $u$ over $V$ with $|u|,
|\nabla u|\leq
c_1$, for a small $c_1$ as in Definition \ref{dfn:V}. Then we
set $D_V(M_\tau) = I_V(M_\tau)$. 
If $M_\tau$ is not such a small graph over $V$ on the annulus,
then we let $D_V(M_\tau) = \infty$.
\end{definition}

Lemma~\ref{lem:evolzw} leads to the following properties of $I_V$.

\begin{lemma}\label{lem:IVest}
  The $L^2$-distance to $V\in\mathcal{V}$ has the following properties.
  \begin{enumerate}
  \item There is $C>0$, depending only on $\mathcal{V}$, so that 
  $$
  I_V(M_{\tau+s}) \leq CI_V(M_\tau)\quad\text{for any $s\in [0,1]$.}
  $$
  \item 
  For any $0<\delta \leq s\leq1$ there are $p > 1$ and $C > 0$ (both depending on $\delta$), such that on $M_{\tau+s}$
  we have 
  $$ |\mathbf{x}|^2 d_V^2 + |\theta - \theta_V|^2 \leq C e^{\frac{|\mathbf{x}|^2}{4p}}
    I_V(M_\tau)^2. $$
  \item There is a $C > 0$ satisfying the following. For any $\gamma >
    0$ there is a compact subset $K_\gamma\subset \mathbb{C}^2\setminus\{\mathbf{0}\}$ 
    such that if
    $$ \int_{K_\gamma\cap M_{\tau+1}} (|\mathbf{x}|^2 d_V^2 + |\theta - \theta_V|^2)
      e^{-|\mathbf{x}|^2/4} \leq \epsilon^2, $$
    then
    $$ I_V(M_{\tau+1}) \leq C\big(\epsilon + \gamma I_V(M_\tau)\big). $$
  \end{enumerate}
\end{lemma}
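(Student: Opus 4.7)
The common ingredient for all three parts is that the function
\[ F := |zw|^2 + |\theta - \theta_V|^2 \]
(which is equivalent, up to constants depending on the lower bound for the angle between the planes of $V$, to $|\bx|^2 d_V^2 + |\theta-\theta_V|^2$) is essentially a subsolution of the heat equation along the unrescaled flow $L_t$. Indeed, since $\theta$ satisfies the linear heat equation $(\partial_t - \Delta)\theta = 0$ along any zero Maslov LMCF and $\theta_V$ is constant, one has $(\partial_t - \Delta)|\theta-\theta_V|^2 = -2|\nabla\theta|^2 \leq 0$. Combining Lemma~\ref{lem:evolzw} with Lemma~\ref{lem:nablazw} and AM--GM yields $(\partial_t - \Delta)|zw|^2 \leq C|zw||\theta-\theta_V| \leq CF$. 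Hence $(\partial_t - \Delta)F \leq CF$, so $e^{-Ct}F$ is a genuine non-negative subsolution.

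For part (1), I would rewrite $I_V(M_\tau)^2$ in the unrescaled picture, noting that $|\bx|^2 d_V^2$ is homogeneous of degree four while $|\theta-\theta_V|^2$ is scale-invariant:
\[ I_V(M_\tau)^2 = 4\pi\int_{L_t}\big[t^{-2}|zw|^2 + |\theta-\theta_V|^2\big]\,\rho_{\bOh,0}\,d\mathcal{H}^2, \]
with $t=-e^{-\tau}$. Differentiating in $\tau$ via Huisken's monotonicity \eqref{eq:Huisken}, the $\partial_t$ of the $t^{-2}$ factor produces a term $2|t|^{-3}|zw|^2$ which, together with the cross term $|t|^{-2}(\partial_t-\Delta)|zw|^2 \leq C|t|^{-2}|zw||\theta-\theta_V|$ from the subsolution inequality, is absorbed by AM--GM into a bound $(\partial_t - \Delta)(t^{-2}|zw|^2 + |\theta-\theta_V|^2) \leq C'|t|^{-1}(t^{-2}|zw|^2 + |\theta-\theta_V|^2)$. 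The dangerous $|t|^{-1}$ factor is exactly cancelled by the chain-rule factor $dt/d\tau = |t|$, yielding $\tfrac{d}{d\tau}I_V(M_\tau)^2 \leq C' I_V(M_\tau)^2$, and Gronwall on $[\tau, \tau+s]$ with $s \leq 1$ gives (1).

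For part (2), I would apply a parabolic mean value inequality in the spirit of Ecker to the non-negative subsolution $e^{-Ct}F$ on $L_t$, giving a bound
\[ F(\bx_0, t_0) \leq C\int_{L_{t_0-r^2}} F\,\rho_{\bx_0, t_0+r^2}\,d\mathcal{H}^2 \]
for a suitably chosen $r = r(\delta)$. Completing the square in the Gaussian exponent (and iterating in $O(1)$ steps if necessary) compares $\rho_{\bx_0, t_0+r^2}$ to $\rho_{\bOh,0}$ up to a multiplicative factor $Ce^{|\bx_0|^2/(4p)}$ with $p>1$ depending only on $\delta$; combining with (1) produces (2). For part (3), I split $M_{\tau+1}$ into $M_{\tau+1}\cap K_\gamma$, $M_{\tau+1}\cap B_r$, and $M_{\tau+1}\setminus B_R$. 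The first contributes $\leq \epsilon^2$ by hypothesis. On $B_r$ the factor $e^{|\bx|^2/(4p)}$ is bounded by $e^{r^2/(4p)}$, so (2) combined with bounded area ratios gives a contribution $\leq Cr^2 I_V(M_\tau)^2$. Outside $B_R$, the pointwise bound (2) combined with the Gaussian weight yields an integrand $\leq Ce^{-c|\bx|^2}I_V(M_\tau)^2$ with $c = (1-1/p)/4 > 0$, and summing over dyadic annuli using the bounded area ratios produces a contribution $\leq Ce^{-cR^2}I_V(M_\tau)^2$. Choosing $r$ small and $R$ large depending on $\gamma$, and $K_\gamma = \overline{B_R}\setminus B_r$, yields $I_V(M_{\tau+1})^2 \leq \epsilon^2 + \gamma^2 I_V(M_\tau)^2$, hence (3).

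The main obstacle is the pointwise estimate (2): converting the centered weighted $L^2$ bound from (1) into a pointwise bound at an arbitrary point $\bx_0$ requires careful matching of the backward heat kernels centered at two different points, and the uniform choice of $p > 1$ is exactly what the hypothesis $s \geq \delta > 0$ buys us. Parts (1) and (3) are then largely consequences of the subsolution inequality combined with elementary Gaussian bookkeeping, using (2) in an essential way to control both the near-origin and tail contributions in terms of $I_V(M_\tau)^2$.
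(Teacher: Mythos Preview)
Your approach to parts (1) and (3) is essentially correct and matches the paper's strategy (for (3) you use the pointwise bound directly rather than the paper's H\"older argument on the $L^p$ norm, which is fine and arguably cleaner). The problem is in part (2), and specifically in obtaining $p>1$ rather than $p=1$.

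A direct parabolic mean value inequality for the subsolution $F$ via Huisken monotonicity gives
\[
  F(\bx_0,t_0)\;\leq\; C\int_{L_{-1}} F\,\rho_{\bx_0,t_0},
\]
and the heat-kernel comparison (completing the square exactly as you suggest) yields
\[
  \frac{\rho_{\bx_0,t_0}}{\rho_{\bOh,0}}(\bx,-1)\;\leq\; C_{t_0}\exp\Big(-\frac{|\bx_0|^2}{4t_0}\Big).
\]
After rescaling back to $M_{\tau+s}$ this becomes the factor $e^{|\bx|^2/4}$, i.e.\ exactly $p=1$. Shifting the heat-kernel centre to $t_0+r^2$ makes this \emph{worse}, not better (the denominator $|t_0+r^2|$ is smaller), and iterating does not help either: a pointwise bound $F\leq Ce^{|\bx|^2/4}I_V^2$ is exactly critical against the Gaussian weight and does not feed back into any improved integrability. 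So the ``$O(1)$ iteration'' cannot manufacture $p>1$.

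The missing ingredient is a hypercontractive step. The paper spends half a time step applying Ecker's log-Sobolev inequality~\cite{Ecker.logSobolev} to the smoothed subsolutions $u_\delta+v_\delta$ of Lemma~\ref{lem:evolzw}, upgrading the Gaussian $L^2$ control on $F$ at time $\tau$ to Gaussian $L^{p}$ control at time $\tau+s/2$ for some $p>1$ depending on $s$. Since $F^{p}$ is again a subsolution (convexity of $x\mapsto x^p$), one then runs the mean value/heat-kernel comparison on $F^{p}$ from $\tau+s/2$ to $\tau+s$; taking $p$-th roots converts the resulting factor $e^{|\bx|^2/4}$ into $e^{|\bx|^2/(4p)}$. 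This $p>1$ is not cosmetic: it is precisely what makes the tail in your part (3) integrable, and it is used throughout Sections~\ref{sec:graphicality}--\ref{sec:decay} (see Definition~\ref{dfn:Rd}). You should insert this log-Sobolev upgrade between your $L^2$ bound and the pointwise estimate.
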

\begin{proof} 
  As in Lemma~\ref{lem:nablazw} we choose complex coordinates $z,w$
  for a hyperk\"ahler-rotated complex structure on $\mathbb{C}^2$ such
  that $V$ is given by $zw=0$.
    It follows from Lemma \ref{lem:nablazw} and \eqref{eq:evolzw} that
\begin{equation*}
\begin{split}
 (\partial_t - \Delta)|zw|^2 & \leq 4 |zw| |\nabla z\cdot \nabla w|\\   
 &\leq C|zw||\theta -\theta_V| \\
 &\leq C( |zw|^2 +
    |\theta - \theta_V|^2),
\end{split}
\end{equation*}
    and so
  \begin{equation}\label{eq:zwthetaheateqn}
    (\partial_t - \Delta) (|zw|^2 + |\theta - \theta_V|^2) \leq C(
    |zw|^2 + |\theta -\theta_V|^2).
  \end{equation}
The monotonicity formula along the unrescaled flow then implies that for $-T \leq t_1 <t_2 <0$
$$ \int_{L_{t_2}} (|zw|^2 + |\theta - \theta_V|^2 ) \rho_{\mathbf{0},0} \leq e^{C(t_2-t_1)}  \int_{L_{t_1}} (|zw|^2 + |\theta - \theta_V|^2 ) \rho_{\mathbf{0},0}\, .$$
Rescaling the flow parabolically such that $\tau$ corresponds to $t_1=-1$ (i.e.~translating $\tau$ to $0$ for the rescaled flow) this implies that for the rescaled flow
$$ \int_{M_{\tau+s}} (e^{-2s} |zw|^2 + |\theta - \theta_V|^2 ) e^{-|\mathbf{x}|^2/4} \leq e^{C}  \int_{M_\tau} (|zw|^2 + |\theta - \theta_V|^2 ) e^{-|\mathbf{x}|^2/4}\, .$$

   To deduce property (1) we claim that $|{\bf x}|d_V$ is uniformly equivalent to $|zw|$, with a
constant depending on $c_2$ in the definition of
$\mathcal{V}$. Indeed, the distance $d_V$ is uniformly equivalent to
$\min\{|z|,|w|\}$, while $|{\bf x}|$ is uniformly equivalent to
$\max\{|z|, |w|\}$, and $\min\{|z|, |w|\}\max\{|z|,|w|\} = |zw|$.  
  
 Let $s>0$.  A sharper estimate as in property (1) is obtained by using
  Ecker's log-Sobolev inequality~\cite[Theorem 3.4]{Ecker.logSobolev} along the rescaled flow
  $M_\tau$. We first  note that, similar to above, we can use \eqref{eq:uvdelatheateqn} and Lemma \ref{lem:nablazw} to estimate
  $$  (\partial_t - \Delta) (u_\delta + v_\delta ) \leq C(
    u_\delta+ v_\delta)\, .$$  
  The results in~\cite{Ecker.logSobolev} then imply that for some $\hat p >2$ depending on
  $s$, we have
  \begin{equation*}
 \left(\int_{M_{\tau+s/2}} ( u_\delta+ v_\delta)^{\hat p}
      e^{-|\mathbf{x}|^2/4} \right)^{1/\hat p} \leq C \left(\int_{M_{\tau}} ( u_\delta+ v_\delta)^2  e^{-|\mathbf{x}|^2/4}\right)^\frac{1}{2}
    \end{equation*}
  for $C$ depending on $\mathcal{V}$ and $\delta_0$. Letting $\delta \searrow 0$ then yields for $f = |zw|^2 + |\theta - \theta_V|^2$ and $p = \hat p/2>1$ that
  \begin{equation} \label{eq:lS10}
\left(\int_{M_{\tau+s/2}}f^{p}
      e^{-|\mathbf{x}|^2/4} \right)^{1/p} \leq C \int_{M_{\tau}}f  
      e^{-|\mathbf{x}|^2/4} \leq C I_V(M_\tau)^2\, .
\end{equation}

  Let us now consider the mean curvature flow $L_t$, with initial
  condition $L_{-1} = M_{\tau+s/2}$. Consider any $(\mathbf{x}_0, t_0)$
  with $t_0\in (-1,0)$ and let $\rho_{\mathbf{x}_0, t_0}$ be the backwards heat kernel centered at
  $(\mathbf{x}_0, t_0)$. Using the monotonicity formula
  again for $f^p$ with $f = |zw|^2 + |\theta - \theta_V|^2$, together with \eqref{eq:zwthetaheateqn}, we have the pointwise
  estimate
\begin{equation}\label{eq:fp.est.1}  f(\mathbf{x}_0, t_0)^p \leq C \int_{L_{-1}} f^p \rho_{\mathbf{x}_0, t_0}. 
\end{equation}
At the same time from \eqref{eq:lS10} we have that
\begin{equation}\label{eq:fp.est.2}
\int_{L_{-1}} f^p \rho_{\bOh,0} \leq C I_V(M_\tau)^{2p}.
\end{equation}
  To estimate $f(\mathbf{x}_0, t_0)^p$ we therefore need to bound $\rho_{\mathbf{x}_0,
    t_0} / \rho_{\bOh,0}$ at $t=-1$. We have
  \begin{equation} 
  \label{eq:rhoest1}
 \frac{\rho_{\mathbf{x}_0, t_0}}{\rho_{\bOh,0}} (\mathbf{x},-1)= C_{t_0} \exp\Big(
    -\frac{|\mathbf{x}-\mathbf{x}_0|^2}{4(t_0+1)} + \frac{|\mathbf{x}|^2}{4}\Big), 
    \end{equation}
  for a $t_0$-dependent constant $C_{t_0}$ (which is
  uniformly bounded as long as $t_0$ is bounded away from $-1$). 
  Since
  \[ (t_0+1) |\mathbf{x}|^2 - |\mathbf{x}-\mathbf{x}_0|^2 = t_0 |\mathbf{x} + t_0^{-1}\mathbf{x}_0|^2 -
    t_0^{-1}(t_0+1) |\mathbf{x}_0|^2 \]
  and $t_0\in(-1,0)$, we have
  \begin{equation}\label{eq:rhoest2}
    -\frac{|\mathbf{x}-\mathbf{x}_0|^2}{4(t_0+1)} + \frac{|\mathbf{x}|^2}{4} \leq
    -\frac{|\mathbf{x}_0|^2}{4t_0}. 
    \end{equation}
  It follows from \eqref{eq:fp.est.1}, \eqref{eq:fp.est.2},
  \eqref{eq:rhoest1} and \eqref{eq:rhoest2} that  
  $$ f(\mathbf{x}_0, t_0)^p \leq C_{t_0}\exp\left(-\frac{|\mathbf{x}_0|^2}{4t_0}\right) \int_{L_{-1}}
    f^p\rho_{\bOh,0} \leq C_{t_0}
    \exp\left(-\frac{|\mathbf{x}_0|^2}{4t_0}\right)
    I_V(M_\tau)^{p}. $$
  Scaling this estimate back to the rescaled flow $M_\tau$ (i.e. scaling $\mathbf{\bar x} = (-t_0)^{-1/2} \mathbf x$ at $t=t_0$ and $e^{-s} = -t_0$) we then
  have the following estimate on $M_{\tau+s}$: 
  $$ ( e^{2s} |zw|^2 + |\theta - \theta_V|^2)^p \leq C e^{ |\mathbf{x}|^2/4}
    I_V(M_\tau)^{2p}, $$
  and so
  $$ |zw|^2 + |\theta - \theta_V|^2 \leq C e^{ |\mathbf{x}|^2 / 4p}
    I_V(M_\tau)^{2} $$
  as claimed in property (2).

  To see property (3) we use \eqref{eq:lS10} again. With $s=2$ it
  implies that for any compact set $K$, with suitable $p,p' > 1$ we have
  \begin{equation}\label{eq:int40}
    \begin{aligned}
    \int_{M_{\tau+1}\setminus K} f e^{-|\bx|^2/4} &\leq \left(
      \int_{M_{\tau+1}\setminus K} e^{-|\bx|^2/4}\right)^{1/{p'}}
    \left(\int_{M_{\tau+1}\setminus K} f^p e^{-|\bx|^2 / 4}\right)^{1/p} \\
    &\leq \mathrm{Vol}(M_{\tau+1}\setminus K, e^{-|\bx|^2/4})^{1/{p'}}
    CI_V(M_\tau)^2.
    \end{aligned}
  \end{equation}
    Together with the integral bound on $K$ at $\tau+1$ assumed in the statement of property (3), if we choose
    $K$ sufficiently large, depending on $\gamma$, we get
    $$ I_V(M_{\tau+1})^2 \leq \epsilon^2 + C\gamma^2 I_V(M_\tau)^2, $$
    as required. 
\end{proof}

Next we control the growth of the distance $D_V$.

\begin{prop}\label{prop:Dgrowth1}
  There is a constant $C > 0$ such that
  if the constant $c_0$ in Condition \eqref{cond:ddag} and $D_V(M_\tau)$ are
  sufficiently small, then 
  \begin{equation}\label{eq:DV40}
  D_V(M_{\tau+s}) \leq CD_V(M_\tau)\quad\text{for $s
  \in [0,1]$,}
\end{equation}
as long as $\tau \in [T_0+1,T_1-2]$. 
\end{prop}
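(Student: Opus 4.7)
My plan is to split the inequality \eqref{eq:DV40} into two tasks: controlling the $L^2$-quantity $I_V(M_{\tau+s})$, and verifying that the graphicality condition built into $D_V$ persists on $B_2 \setminus B_1$ throughout $s \in [0,1]$. The hypothesis $D_V(M_\tau) < \infty$ means that $M_\tau$ is a $C^1$-graph $u$ over $V$ on $B_2 \setminus B_1$ with $|u|, |\nabla u| \leq c_1$, and that $D_V(M_\tau) = I_V(M_\tau)$. Lemma \ref{lem:IVest}(1) immediately gives $I_V(M_{\tau+s}) \leq C I_V(M_\tau)$ for $s \in [0,1]$, so what remains is to show that $M_{\tau+s}$ is also a small $C^1$-graph over $V$ on the annulus.

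The first step is pointwise Hausdorff control. Applying Lemma \ref{lem:IVest}(2) on a slightly larger annulus $B_3 \setminus B_{1/2}$, where $|\mathbf{x}|$ is bounded away from zero and infinity, I obtain $d_V(\mathbf{x}) \leq C I_V(M_\tau)$ on $M_{\tau+s}$ for $s \in [\delta, 1]$ and any fixed $\delta > 0$. For $s \in [0, \delta]$ the same pointwise bound follows from short-time smoothness of the flow together with the initial graphical assumption, possibly enlarging the constant. Thus if $D_V(M_\tau)$ is small, then $M_{\tau+s}$ is Hausdorff-arbitrarily close to $V$ on $B_3 \setminus B_{1/2}$ throughout $s \in [0,1]$.

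The second step, which I expect to be the main obstacle, is curvature control via Condition \eqref{cond:ddag}. On $B_3 \setminus B_{1/2}$ the cone $V$ consists of two disjoint multiplicity-one planes, each having Gaussian density $1$ at its own points. Combining the smallness of the excess drop $\mathcal{A}(M_{T_0}) - \mathcal{A}(M_{T_1}) < c_0$ with Huisken's monotonicity formula and the Hausdorff closeness from the first step, I force the Gaussian density of $M_{\tau+s}$ at any point in $B_2 \setminus B_1$ to be at most $1 + \epsilon$ with $\epsilon$ arbitrarily small. White's $\epsilon$-regularity theorem then yields smoothness and a uniform bound on the second fundamental form of $M_{\tau+s}$ on $B_2 \setminus B_1$. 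The delicate point is to ensure that only one of the two sheets of $V$ contributes to the local density at each point, which uses that the planes are disjoint away from the origin together with the buffer $\tau \in [T_0+1, T_1-2]$ to invoke monotonicity from an earlier time at which the flow is already graphical (the graphicality persisting for a short time by smoothness).

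To close the argument, I would run a continuity bootstrap: let $s^*$ be the supremum of $s \in [0,1]$ such that $M_{\tau+s'}$ remains a $C^1$-graph $u'$ over $V$ on $B_2 \setminus B_1$ with $|u'|, |\nabla u'| \leq 2 c_1$ for all $s' \in [0, s]$. The $C^0$-norm of $u'$ is controlled by $d_V$, hence is small by the first step; the $C^1$-norm is then controlled by interpolating $u'$ on the larger annulus $B_3 \setminus B_{1/2}$ against the second fundamental form bound from the second step. For $D_V(M_\tau)$ and $c_0$ sufficiently small, this forces $s^* = 1$, yielding $D_V(M_{\tau+s}) = I_V(M_{\tau+s}) \leq C I_V(M_\tau) = C D_V(M_\tau)$ for all $s \in [0,1]$, as required.
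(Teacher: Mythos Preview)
Your overall strategy—control $I_V$ via Lemma~\ref{lem:IVest}(1), then propagate graphicality on the annulus using White's regularity—is sound in spirit, but the paper takes a genuinely different and cleaner route, and your ``second step'' has a gap as written.

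The paper argues by contradiction. Given a sequence of flows $M^i_\tau$ with $c_0\to 0$ and $D_V(M^i_0)\to 0$ for which \eqref{eq:DV40} fails, the key observation is that $c_0\to 0$ forces the spacetime integral $\int\!\!\int (2|\mathbf{H}|^2+|\mathbf{H}+\mathbf{x}^\perp/2|^2)e^{-|\mathbf{x}|^2/4}$ to vanish in the limit (via \eqref{eq:Achange}). One then invokes Neves's structure theorem \cite[Theorem~A]{Neves:zero-maslov} to deduce that, along a subsequence, the flows converge on $(-1,2)$ to a \emph{static} union of Lagrangian planes $V'$. The graphicality hypothesis at $\tau=0$ with $|u_i|\to 0$ pins down $V'=V$ with multiplicity one on the annulus, and then White's regularity upgrades the convergence to smooth on $(B_3\setminus B_{1/2})\times(-1/2,3/2)$. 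Graphicality for large $i$ on $[0,1]$ follows immediately. The buffer $\tau\in[T_0+1,T_1-2]$ is used only to ensure the limit flow exists on the full interval $(-1,2)$.

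Your direct approach runs into trouble at the density bound. The assertion that ``smallness of the excess drop $+$ Hausdorff closeness $\Rightarrow$ density $\leq 1+\epsilon$'' is not justified: Hausdorff closeness does not control multiplicity, and the excess drop (centered at the origin) does not bound density ratios centered at arbitrary $\mathbf{x}_0$ in the annulus. You try to repair this by invoking ``monotonicity from an earlier time at which the flow is already graphical,'' but the only time at which you \emph{know} graphicality is $\tau$ itself—the buffer $\tau\geq T_0+1$ gives room for the flow to exist, not graphicality at $\tau-1$. Folding the density bound into the bootstrap (using graphicality on $[0,s^*)$ to control the density ratio at scale $r\ll s^*$) could in principle close this, but then the small-time regime $s\in[0,\delta]$ needs separate care: your appeal to ``short-time smoothness'' is not uniform without a priori curvature bounds, and Lemma~\ref{lem:IVest}(2) does not cover $s<\delta$. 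The paper's compactness argument sidesteps all of this: Neves's theorem identifies the limit and its multiplicity in one stroke, and White's theorem is applied once to the sequence rather than iteratively inside a bootstrap.
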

\begin{proof}
   Given the growth bound for $I_V$ in Lemma~\ref{lem:IVest}, we only
  need to ensure that if $M_{\tau}$ is a $C^1$-small graph over $V$ on
  the annulus $B_2\setminus B_1$, then so is $M_{\tau+s}$ for
  $s\in[0,1]$. We will show that this follows from the estimate for the excess, defined in Definition \ref{dfn:excess}. 
  
  Recall that
$\theta$ satisfies the heat equation along the Lagrangian mean
curvature flow and $|\nabla \theta| = |\mathbf{H}|$. 
 It follows from Huisken's monotonicity formula that  we have
  \[ \label{eq:Achange} \mathcal{A}(M_{T_0}) - \mathcal{A}(M_{T_1})
    \geq \int_{T_0}^{T_1}  \int_{M_\tau} \left( 2|\mathbf{H}|^2 + \left|\mathbf{H} +
        \frac{\mathbf{x}^\perp}{2}\right|^2\right) e^{-|\mathbf{x}|^2/4}. \]
 
 We can then argue by contradiction, and suppose that we have a
  sequence of rescaled flows $M^i_\tau$, satisfying Condition \eqref{cond:ddag}  with corresponding constants $c_0 \to 0$, however the conclusion
  \eqref{eq:DV40} does not hold.  We can assume that we are working at
  $\tau=0$ and that also 
  $D_V(M^i_0)\to 0$. 
  
  As in
  Neves~\cite[Theorem A]{Neves:zero-maslov}, up to choosing a subsequence, the flows $M^i$
  converge to a static limit flow $M^\infty$ given by a union of planes $V'$ for $\tau \in (-1,2)$. 
  The assumption that $D_V(M^i_0)\to 0$ implies that $M^i_0$ is the
  graph of a (vector-valued) function $u_i$ over $V$ on the annulus $B_2\setminus B_1$,
  with $|u_i|, |\nabla u_i| < c_0$ and $|u_i| \to 0$. Note that this implies that $V'=V$. By White's regularity theorem~\cite{White.regularity} the convergence is smooth on $(B_3 \setminus B_{1/2})\times (-1/2,3/2)$. This implies that, for $i$ sufficiently large, $M^i_\tau$ is the graph of a function $|u_i|, |\nabla u_i| < c_0$ over $V$ on $B_2\setminus B_1$ for all $t\in [0,1]$.
  
  Thus Lemma~\ref{lem:IVest} implies $D_V(M^i_s)\leq CD_V(M^i_0)$ for all  $s\in [0,1]$ as required. 
\end{proof}

\begin{remark}
From now on we will assume that $c_0>0$ in Condition \eqref{cond:ddag} is small
enough for Proposition~\ref{prop:Dgrowth1} to hold. 
\end{remark}

 For later use, we will need to compare the distances $D_V$ as we vary the cone
  $V\in\mathcal{V}$. For $V, V'\in \mathcal{V}$ let $d(V,V')$ denote
  the Hausdorff distance between $V\cap B_1$ and $V'\cap B_1$.  
  \begin{lemma}\label{lem:DVV'comp}
    There is a constant $C$ such that if $V, V'\in \mathcal{V}$ and
    $D_V(M_\tau)$ is sufficiently small, then 
    $$ D_{V'}(M_\tau) \leq C( D_{V}(M_\tau) + d(V,V')). $$
  \end{lemma}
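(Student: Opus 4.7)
The proof splits naturally into two parts: comparing the $L^2$-type quantities $I_V(M_\tau)$ and $I_{V'}(M_\tau)$, and ensuring that the graphicality condition implicit in $D_{V'}$ is satisfied.

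For the $L^2$-comparison, I would use two elementary facts. First, since $V$ and $V'$ are cones through the origin, the Hausdorff distance scales linearly under dilation, giving $|d_V(\bx) - d_{V'}(\bx)| \leq |\bx|\, d(V, V')$ for all $\bx \in \CC^2$. Second, the Lagrangian angle depends in a Lipschitz way on the plane within the compact family $\mathcal{V}$, so $|\theta_V - \theta_{V'}| \leq C\, d(V, V')$ with $C$ depending only on $\mathcal{V}$. Combined with $(a+b)^2 \leq 2a^2 + 2b^2$, these yield the pointwise bound
$$|\bx|^2 d_{V'}^2 + |\theta - \theta_{V'}|^2 \leq 2\bigl(|\bx|^2 d_V^2 + |\theta - \theta_V|^2\bigr) + C(1 + |\bx|^4)\, d(V, V')^2.$$
Integrating against the Gaussian weight $e^{-|\bx|^2/4}$ on $M_\tau$ and using the uniform area-ratio bound to control $\int_{M_\tau}(1 + |\bx|^4)\, e^{-|\bx|^2/4}$, we obtain $I_{V'}(M_\tau)^2 \leq 2\, I_V(M_\tau)^2 + C\, d(V, V')^2$, from which $I_{V'}(M_\tau) \leq C\bigl(I_V(M_\tau) + d(V, V')\bigr)$ follows.

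For graphicality, the assumption that $D_V(M_\tau)$ is finite and sufficiently small means $M_\tau$ is a graph of some function $u$ over $V$ on $B_2 \setminus B_1$ with $|u|, |\nabla u| \leq c_1$. By applying Proposition~\ref{prop:Dgrowth1} to propagate smallness of $D_V$ forward in time a little and then invoking White's regularity theorem as in the proof of that proposition, one shows that for $D_V(M_\tau)$ sufficiently small the $C^1$-norm of $u$ on the annulus is actually much smaller than $c_1$. Since $V$ and $V'$ are both close to $V_0$ and mutually transverse in a quantitative way (Definition~\ref{dfn:V}), one can then re-express $M_\tau \cap (B_2 \setminus B_1)$ as a graph $u'$ over $V'$, with $|u'|_{C^1} \leq |u|_{C^1} + C\, d(V, V')$. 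Choosing $c_1$ in Definition~\ref{dfn:V} small enough at the outset ensures $|u'|_{C^1} \leq c_1$, so $D_{V'}(M_\tau) = I_{V'}(M_\tau)$, and the $L^2$-estimate completes the proof.

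The main technical point to watch is the persistence of graphicality over $V'$ with the same constant $c_1$: this hinges on being able to upgrade the $L^2$-type smallness of $D_V(M_\tau)$ to strong $C^1$-smallness of $u$ on $B_2 \setminus B_1$, so that the perturbation induced by changing from $V$ to $V'$ does not push the graph norm past $c_1$. The $L^2$-comparison, in contrast, is essentially a direct computation.
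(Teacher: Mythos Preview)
Your proposal is correct and follows essentially the same approach as the paper: the $L^2$-comparison via $d_{V'}(\bx)\leq d_V(\bx)+|\bx|\,d(V,V')$ and $|\theta_V-\theta_{V'}|\leq C\,d(V,V')$ is exactly what the paper does (your version is in fact more carefully written), and for graphicality the paper likewise appeals to Condition~\eqref{cond:ddag} and the compactness/White-regularity argument from the proof of Proposition~\ref{prop:Dgrowth1}. One minor wording issue: you should not phrase the graphicality step as ``choosing $c_1$ small enough at the outset''---$c_1$ is already fixed; the point is rather that the compactness argument makes $|u|_{C^1}$ \emph{arbitrarily} small once $D_V(M_\tau)$ is small enough, which is what gives room to absorb the $C\,d(V,V')$ perturbation.
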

  \begin{proof}
    For any $\mathbf{x}\in B_1$ we have $d_{V'}(\mathbf{x}) \leq d_V(\mathbf{x}) + d(V, V')$,
    so by scaling we have $d_{V'}(\mathbf{x}) \leq d_V(\mathbf{x}) + |\mathbf{x}| d(V, V')$ for
    $\mathbf{x}\in \mathbb{C}^2$. We also have $|\theta_V - \theta_{V'}|\leq C
    d(V, V')$ for a constant $C$. Combining these observations we get
    $$ I_{V'}(M_\tau) \leq C (I_V(M_\tau) + d(V,V')^2). $$
    To get the same estimate for $D_V, D_{V'}$ we just need to ensure
    that $M_\tau$ is graphical over $V'$ on $B_2\setminus
    B_1$ for $u$ with $|u|,|\nabla u|$ sufficiently small as in Definition \ref{dfn:IVDV}. This follows using Condition \eqref{cond:ddag} as in the proof of
    Proposition~\ref{prop:Dgrowth1}. 
  \end{proof}
  
  \section{Graphicality}\label{sec:graphicality}

\subsection{Graphicality and distance to planes}
We want to see that an estimate for $D_V(M_{\tau-1})$ can be used to
show graphicality of $M_\tau$ on a much larger region than just the
fixed annulus $B_2\setminus B_1$. For this we first need the
following.
\begin{lemma}\label{lem:graphextension}
  Let $L_t$ be a mean curvature flow of surfaces in the ball
  $B_2\subset \mathbb{C}^2$ for $t\in [-4,0]$, with uniform
  area bound $\mathcal{H}^2(L_t) \leq C$. Suppose that $S$ is an embedded smooth surface
  passing through the origin, and with second fundamental form satisfying
  $|A|\leq \delta$ for some $\delta>0$. Assume that in addition we have the following.
  \begin{enumerate}
  \item $L_t$ is contained in the $\delta$-neighbourhood of $S$.
  \item On the parabolic ball $[-1/4,0] \times B_{1/2}$ the flow $L_t$
    is the graph of a (vector-valued) function $u$ over $S$ with $|u| <\delta$ and $|\nabla u| < 1$.
  \item We have the estimate
    \begin{equation}\label{eq:Hest10} \int_{-4}^0 \int_{L_t\cap B_2}
      |\mathbf{H}|^2 < \delta.
      \end{equation}
  \end{enumerate}
 Let $\varepsilon>0$ be given. Then if $\delta$ is chosen sufficiently small, the flow $L_t$ is the
  graph of $u$ over $S$ on the parabolic ball $[-1,0]\times B_1$, with
  $|\nabla u| \leq \varepsilon$.
\end{lemma}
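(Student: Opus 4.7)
I would argue by contradiction using parabolic compactness together with White's local regularity theorem. Suppose the conclusion fails for some fixed $\varepsilon > 0$: then there is a sequence $\delta_i \to 0$, flows $L_t^i$, and surfaces $S_i$ satisfying hypotheses (1)--(3) with $\delta$ replaced by $\delta_i$, such that on $[-1,0]\times B_1$ either $L_t^i$ is not a graph over $S_i$, or the graphing function has gradient exceeding $\varepsilon$ somewhere.

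The first step is to extract limits. The bound $|A_{S_i}| \leq \delta_i \to 0$ together with $\bOh \in S_i$ implies, by a standard ODE argument for the Gauss map, that after passing to a subsequence $S_i \to P$ smoothly on $B_2$ for some plane $P$ through the origin. The uniform area bound allows me to apply Brakke/Ilmanen compactness to the integral Brakke flows associated with $L_t^i$ and pass to a subsequential limit flow $L_t^\infty$ on $[-4,0]\times B_2$. Lower semicontinuity of the $L^2$-norm of $\mathbf{H}$, applied to hypothesis (3), forces $\mathbf{H} \equiv 0$ in the limit, so $L_t^\infty$ is stationary at each time, and since $L_t^i$ lies in the $\delta_i$-tubular neighbourhood of $S_i$ with $S_i \to P$, its support is contained in $P \cap B_2$.

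Next I would pin down the multiplicity. For every time $t \in [-1/4, 0]$, hypothesis (2) says $L_t^i$ is a graph over $S_i$ on $B_{1/2}$ with $|u| < \delta_i$, so the limit is a multiplicity-one copy of $P \cap B_{1/2}$ on this time interval. The constancy theorem for stationary integral varifolds supported in the connected plane $P \cap B_2$ then forces multiplicity one on all of $P \cap B_2$ for each such $t$. To propagate multiplicity one to earlier times I would use Huisken's monotonicity formula~\eqref{eq:Huisken}: the Gaussian density $\Theta(L^\infty, \bx_0, t_0)$ is monotone in $t_0$, it equals $1$ for $t_0 \in [-1/4, 0]$ at each $\bx_0 \in P$, and upper semicontinuity combined with integrality of the density for a stationary limit forces it to equal $1$ throughout $[-4,0] \times (P \cap B_2)$.

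Finally, White's local regularity theorem~\cite{White.regularity} applies: multiplicity-one convergence to the smooth stationary flow $P$ forces smooth convergence $L_t^i \to P$ on every compact subset of the spacetime interior, in particular on $[-1,0] \times \overline{B_1}$. Since $S_i \to P$ smoothly as well, for $i$ large each $L_t^i$ is a smooth graph $\tilde u_i$ over $S_i$ on $[-1,0]\times B_1$ with $|\nabla \tilde u_i| \to 0$; uniqueness of the graphing function on the overlap with $[-1/4,0]\times B_{1/2}$ identifies $\tilde u_i$ with the $u$ from hypothesis (2). This contradicts the failure assumption for all large $i$. The main technical obstacle I anticipate is establishing multiplicity one on the full parabolic ball $[-4,0]\times B_2$ rather than only on the graphical subregion $[-1/4,0]\times B_{1/2}$, and the monotonicity-plus-constancy argument above is the key ingredient for this step.
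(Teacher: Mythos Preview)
Your overall architecture matches the paper's: contradiction, extract a plane limit $S_\infty$, take a Brakke-flow limit, show it is the multiplicity-one plane on the whole parabolic region, then invoke White's regularity. The gap is precisely where you flag it, and your proposed fix does not work.

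Huisken's monotonicity formula is monotonicity in the \emph{scale} variable for a fixed centre $(\bx_0,t_0)$; it does not say that $t_0\mapsto\Theta(L^\infty,\bx_0,t_0)$ is monotone in a direction that helps you. For a Brakke flow whose generalised mean curvature vanishes, the Brakke inequality still allows sudden mass loss: the flow $\mu_t=m(t)\,[P]$ with $m$ integer-valued and non-increasing is a perfectly valid limit, and it has density $1$ on $[-1/4,0]$ while having density $2$ (say) on $[-4,-1/2)$. Upper semicontinuity and integrality only give you $\Theta\geq 1$ at earlier times, not $\Theta=1$. So the argument ``monotonicity $+$ integrality $+$ u.s.c.\ forces density one throughout'' does not close.

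The paper rules out this mass drop by using hypothesis (3) a second time, at the level of the smooth \emph{sequence} rather than the limit. For a cutoff $\chi$ supported in $B_2$ one has
\[
\partial_t\int_{L^i_t}\chi=\int_{L^i_t}\Big(-\chi|\mathbf{H}|^2+\langle D\chi,\mathbf{H}\rangle\Big),
\]
so
\[
\Big|\int_{L^i_{t_0}}\chi-\int_{L^i_{t_1}}\chi\Big|\leq C\int_{-4}^0\!\int_{L^i_t\cap B_2}\big(|\mathbf{H}|^2+|\mathbf{H}|\big)\longrightarrow 0,
\]
using Cauchy--Schwarz and the area bound on the $|\mathbf{H}|$ term. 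Hence the limit flow has constant mass against $\chi$, so it is static and multiplicity one for all $t\in[-4,0]$. With this in place, White's theorem applies exactly as you describe. Replace your monotonicity step by this computation and the proof goes through.
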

\begin{proof}
  We argue by contradiction. Suppose that we have a sequence of flows
  $L_t^i$ satisfying the assumptions with corresponding constants
  $\delta_i\to 0$, and surfaces $S_i$, but the claimed graphicality
  fails for all $i$. Up to choosing a subsequence we
  can assume that the $S_i$ converge to a plane
  $S_\infty$. We can furthermore assume that the flows $(L^i_t)_{-4\leq t \leq 0}$ converge to a unit regular Brakke flow $(\mu_t)_{-4\leq t \leq 0}$ on $B_2$, supported on $S_{\infty}$. The constancy theorem implies that $\mu_t$ agrees with $S_{\infty}$ up to an integer multiplicity, which is monotonically decreasing in time.
  
Note that the graphicality assumption (2) together with interior parabolic estimates
  implies that $L^i_t\to S_\infty$ smoothly on $[-1/4,0]\times B_{1/2}$ , and thus $\mu_t$ agrees with $S_\infty$ with multiplicity 1 on $[-1/4,0]\times B_{2}$. 
 
At the same time, for a cutoff function $\chi$ supported in $B_2$ we
 have
  $$ \bigg|\partial_t \int_{L^i_t} \chi \bigg| = \bigg|\int_{L^i_t} - \chi |\mathbf{H}|^2 +
    \langle D\chi, \mathbf{H}\rangle\bigg|  \leq C\int_{L^i_t\cap B_2} 
    |\mathbf{H}|^2 +  |\mathbf{H}|, $$
  so for any $t_0 < t_1$ we have
 $$ \bigg|\int_{L^i_{t_0}} \chi - \int_{L^i_{t_1}} \chi\bigg|  \leq C \int_{-4}^0 \int_{L_t^i\cap B_2}
    |\mathbf{H}|^2 + |\mathbf{H}| \to 0. $$
  It follows that $(\mu_t)$ is static on $[-4,0]$ and thus agrees with $S_\infty$ with multiplicity one. From White's regularity theorem \cite{White.regularity} we deduce that
  for sufficiently large $i$ the flows $L^i_t$ converge smoothly on $[-1,0]\times B_1$, which implies the required
  graphicality.
  \end{proof}

Using Lemma \ref{lem:graphextension} repeatedly we can extend the graphicality of our
flow to larger and larger regions, as long as it stays in a small
neighbourhood of a smooth surface and we can control the integral of
$|\mathbf{H}|^2$. For the latter we have the following result.
\begin{lemma}\label{lem:intHbound}
  There are constants $C,  r_0, \alpha > 0$ and $p > 1$ satisfying the following. 
  Suppose that $L_t$ is a Lagrangian mean curvature flow for $t\in
  [-1,0)$, where $L_{-1} = M_\tau$ for some $\tau$. Then 
  whenever $r\leq r_0$ and $t_0 \in (-3/4,-1/4)$ with $[t_0-r^2, t_0+r^2]\subset [-3/4,-1/4]$ and 
  \[\label{eq:DV.x0.p.bound} D_V(M_{\tau-1})^2 \exp\Big(-\frac{|\mathbf{x}_0|^2}{4pt_0}\Big) \leq
  1,\]
  we have
 \[\label{eq:upper-bound-intH2} r^{-2} \int_{t_0-r^2}^{t_0} \int_{L_t\cap B_r(\mathbf{x}_0)} |\mathbf{H}|^2 \leq 
    CD_V(M_{\tau-1})^\alpha. \]
\end{lemma}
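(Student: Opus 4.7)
The proof naturally splits into two parts: (a) deducing a pointwise upper bound for $(\theta-\theta_V)^2$ on the parabolic ball $B_{2r}(\bx_0)\times[t_0-4r^2,t_0]$ of the form $CD_V(M_{\tau-1})^{2\alpha'}$ for some $\alpha'>0$, and (b) converting this into the integral estimate for $|\bH|^2$ using the identity $(\partial_t-\Delta)(\theta-\theta_V)^2=-2|\bH|^2$, which is valid along Lagrangian mean curvature flow since $\partial_t\theta=\Delta\theta$ intrinsically on $L_t$ and $|\nabla\theta|=|\bH|$.

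For (a), the plan is to apply Lemma~\ref{lem:IVest}(2), chained over at most a few overlapping unit rescaled-time intervals so as to cover all rescaled times corresponding to $t\in[t_0-4r^2,t_0]\subset[-1,-e^{-2}]$ (using Lemma~\ref{lem:IVest}(1) and $I_V(M_\tau)\leq D_V(M_\tau)\leq CD_V(M_{\tau-1})$ from Proposition~\ref{prop:Dgrowth1} to pass between base points). By taking the parameter $\delta$ in Lemma~\ref{lem:IVest}(2) small, the resulting constant $p_1>1$ can be made as large as we wish, and we obtain on the unrescaled flow $L_t$ the pointwise bound
\[
  |\theta-\theta_V|^2(\bx,t)\,\leq\,C\exp\!\Big(\frac{|\bx|^2}{-4p_1 t}\Big)D_V(M_{\tau-1})^2.
\]
Now fix any $p\in(1,p_1)$ and choose $r_0$ small enough (depending on $p,p_1$) so that on the parabolic ball
\[
  \frac{|\bx|^2}{-4p_1 t}\,\leq\,\frac{p}{p_1}\cdot\frac{|\bx_0|^2}{-4p t_0}+C,
\]
which is possible because $|\bx|^2\leq|\bx_0|^2+O(r_0)$ on $B_{2r}(\bx_0)$ and $|t|/|t_0|=1+O(r_0^2)$ given $|t_0|\geq 1/4$. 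Substituting the hypothesis $D_V(M_{\tau-1})^2\exp(-|\bx_0|^2/(4pt_0))\leq 1$, equivalently $\exp(|\bx_0|^2/(-4pt_0))\leq D_V(M_{\tau-1})^{-2}$, we conclude
\[
  |\theta-\theta_V|^2\leq CD_V(M_{\tau-1})^{2-2p/p_1}=:CD_V(M_{\tau-1})^{2\alpha'},\qquad \alpha':=1-p/p_1>0,
\]
everywhere on the parabolic ball.

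For (b), choose a smooth cutoff $\eta(\bx,t)=\chi(\bx)\psi(t)$ equal to one on $B_r(\bx_0)\times[t_0-r^2,t_0]$ and supported in $B_{2r}(\bx_0)\times[t_0-4r^2,t_0]$, with $|\nabla\chi|\leq C/r$ and $0\leq\psi'\leq C/r^2$. Differentiating $\int_{L_t}\eta^2(\theta-\theta_V)^2$ in $t$, using the heat inequality from (b), integrating by parts, and absorbing the cross term $\eta|\nabla\eta||\bH|(\theta-\theta_V)$ by Young's inequality, we arrive at the Caccioppoli-type estimate
\[
  \int_{t_0-r^2}^{t_0}\!\int_{L_t\cap B_r(\bx_0)}\!|\bH|^2\,\leq\,C\int_{t_0-4r^2}^{t_0}\!\int_{L_t\cap B_{2r}(\bx_0)}\!\big(|\partial_t\eta^2|+|\nabla\eta|^2\big)(\theta-\theta_V)^2,
\]
with no boundary contribution because $\eta$ vanishes at $t=t_0-4r^2$. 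The integrand on the right is bounded by $Cr^{-2}\cdot D_V(M_{\tau-1})^{2\alpha'}$ by (a), and the area-ratio assumption gives $\mathcal{H}^2(L_t\cap B_{2r}(\bx_0))\leq Cr^2$. Together with the time integration over an interval of length $4r^2$, the right-hand side is at most $C\,r^2\,D_V(M_{\tau-1})^{2\alpha'}$. Dividing by $r^2$ gives the required bound with $\alpha:=2\alpha'$.

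The main obstacle is the arithmetic balancing of the two Gaussian widths $p_1$ (from the pointwise bound) and $p$ (from the hypothesis). The positive power of $D_V$ we produce equals $2(1-p/p_1)$, so $p_1$ must be chosen strictly larger than $p$; this constrains the choice of $\delta$ in Lemma~\ref{lem:IVest}(2) and, in particular, forces the chaining across several rescaled-time intervals. Careful tracking of the Gaussian exponent as one enlarges from the basepoint $(\bx_0,t_0)$ to the parabolic ball of radius $r_0$, together with the lower bound $|t_0|\geq 1/4$, is what allows the perturbation of $|\bx_0|^2/|t_0|$ to be absorbed into a multiplicative factor $p/p_1<1$ of the Gaussian exponent.
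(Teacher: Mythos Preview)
Your approach is correct and is a genuine alternative to the paper's, but two minor points need correction. First, the claim that taking $\delta$ small in Lemma~\ref{lem:IVest}(2) makes $p_1$ arbitrarily large is backwards: smaller $\delta$ means less rescaled time for the log--Sobolev improvement, so $p_1\to 1$. This is harmless, however, since your argument only needs \emph{some} $p_1>1$ (which the lemma provides for, say, $\delta=1/4$), and you then choose $p\in(1,p_1)$ as one of the output constants. Second, your displayed inequality with the ``$+C$'' is not quite right because $|\bx_0|$ can be of order $\sqrt{|\ln D_V|}$, so the error $|\bx|^2-|\bx_0|^2=O(r_0|\bx_0|)$ is unbounded. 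The fix is to observe that for $|\bx_0|\ge 1$ one has $|\bx|^2/|t|\le (1+Cr_0)\,|\bx_0|^2/|t_0|$, and then choose $r_0$ small enough (depending only on $p_1/p$) so that $(1+Cr_0)/p_1<1/p$; the case $|\bx_0|<1$ is trivial since the Gaussian factor is then bounded. In the Caccioppoli step you should also note that the material derivative of the ambient cutoff $\chi$ along the flow contributes an extra term $\psi\,\bH\cdot D\chi$, but after Young's inequality this is absorbed exactly as the cross term you already handle.

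The paper proceeds differently. Rather than establishing a pointwise bound on $(\theta-\theta_V)^2$ over the whole parabolic cylinder and then running a Caccioppoli argument, it applies Huisken's monotonicity formula with the recentred backwards heat kernel $\rho_{\bx_0,t_0+r^2}$ to get
\[
\int_{t_0-r^2}^{t_0}\int_{L_t} 2|\bH|^2\,\rho_{\bx_0,t_0+r^2}\ \le\ \int_{L_{-1}} |\theta-\theta_V|^2\,\rho_{\bx_0,t_0+r^2},
\]
invokes the pointwise bound from Lemma~\ref{lem:IVest}(2) only at the single time $t=-1$, and then estimates the resulting integral $\int_{L_{-1}}\exp(|\bx|^2/4p_1-|\bx-\bx_0|^2/4(t_0+r^2+1))$ by completing the square as in \eqref{eq:rhoest2}. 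The lower bound $\rho_{\bx_0,t_0+r^2}\ge c\,r^{-2}$ on the parabolic ball converts this to the desired local estimate. This avoids both the chaining and the cutoff-on-a-moving-surface bookkeeping; your route, on the other hand, is conceptually modular (pointwise bound followed by a standard local energy estimate) and would generalise more readily to situations where a weighted monotonicity formula is unavailable.
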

\begin{proof}
  We consider the monotonicity formula applied with the backwards heat
  kernel $\rho_{\mathbf{x}_0,t_0+r^2}$ centered at $(\mathbf{x}_0, t_0 + r^2)$. Using  $|\nabla \theta|^2
  = |\mathbf{H}|^2$ we have that
  \[\label{eq:H.est.1} \int_{t_0-r^2}^{t_0} \int_{L_t} 2|\mathbf{H}|^2 \rho_{\mathbf{x}_0, t_0+r^2} \leq
    \int_{L_{-1}} |\theta - \theta_V|^2 \rho_{\mathbf{x}_0, t_0 + r^2}. \]
  By the pointwise estimate in Lemma~\ref{lem:IVest} there is some
  $p_1>1$ close to $1$ and  $C > 0$ such that on $L_{-1}$ we have 
 \[\label{eq:H.est.2} |\theta - \theta_V|^2 \leq C e^{|\mathbf{x}|^2/4p_1} D_V(M_{\tau-1})^2. \]
 Therefore we need to estimate the integral
 $$ \int_{L_{-1}} \exp\Big(\frac{|\mathbf{x}|^2}{4p_1}
   -\frac{|\mathbf{x}-\mathbf{x}_0|^2}{4(t_0+r^2+1)}\Big). $$
Let $p_2 \in (1,p_1)$. Arguing as in \eqref{eq:rhoest2}
we find that
 \[\label{eq:H.est.3} \frac{|\mathbf{x}|^2}{4p_2} -\frac{|\mathbf{x}-\mathbf{x}_0|^2}{4(t_0+r^2+1)} \leq
   \frac{|\mathbf{x}_0|^2}{4(p_2-1-t_0-r^2)}.\]
The integral of $\exp\Big(\frac{|\mathbf{x}|^2}{4p_1}
 -\frac{|\mathbf{x}|^2}{4p_2}\Big)$ on $L_{-1}$ is uniformly bounded, so combining \eqref{eq:H.est.1}, \eqref{eq:H.est.2} and \eqref{eq:H.est.3} gives
 $$ \int_{t_0-r^2}^{t_0} \int_{L_t} |\mathbf{H}|^2 \rho_{\mathbf{x}_0, t_0+r^2} \leq C
   D_V(M_{\tau-1})^2 \exp\Big(\frac{
     |\mathbf{x}_0|^2}{4(p_2-1-t_0-r^2)}\Big). $$
 It remains to choose $p$ close enough to 1 in the bound \eqref{eq:DV.x0.p.bound} and $\alpha, r_0
 > 0$ small enough so that for $r\leq r_0$ we have
 $$D_V(M_{\tau-1})^{2-\alpha} \exp\Big(\frac{
     |\mathbf{x}_0|^2}{4(p_2-1-t_0-r^2)}\Big) \leq 1. $$
 Using \eqref{eq:DV.x0.p.bound}, this follows if
 \[\label{eq:alpha.ineq} \frac{2-\alpha}{2}\frac{1}{4pt_0} + \frac{1}{4(p_2-1-t_0-r^2)} <
   0. \]
 Note that in the limiting case, when $\alpha$ and $r_0$ are both $0$ and $p=1$, \eqref{eq:alpha.ineq} reduces to the inequality $p_2 >  1$, which is satisfied. We can
 therefore arrange that \eqref{eq:alpha.ineq} also holds for suitable
 $\alpha, r_0, p$. Combining this with the fact that on $[t_0-r^2,t_0]\times B_r(\bx_0)$ the function $\rho_{\bx_0,t_0+r^2}$ is bounded from below by a positive multiple of $r^{-2}$ yields \eqref{eq:upper-bound-intH2}.
\end{proof}

\subsection{Graphicality scale}
Note that under the correspondence between the mean curvature flow and
its rescaled version, if $L_{-1} =M_{\tau}$, then $M_{\tau+1}
=e^{1/2} L_{-e^{-1}}$. In particular, setting $t_0 = -e^{-1}$ and, for $\mathbf{x}_0\in L_{-e^{-1}}$, letting $\tilde{\mathbf{x}}_0 =
e^{1/2} \mathbf{x}_0$, we see that $|\mathbf{x}_0|^2 / 4pt_0 = |\tilde{\mathbf{x}}_0|^2 / 4p$. This
motivates the following. 

\begin{definition}\label{dfn:Rd}
We choose $p_0 > 1$ smaller than the $p>1$ in both
Lemma~\ref{lem:IVest} for $s=1$ and Lemma~\ref{lem:intHbound}. For
any (small) $d > 0$, we define $R_d > 0$ so that
\begin{equation}\label{eq:Rddefn}
  d^2 e^{R_d^2 / 4p_0} = 1.
\end{equation}
\end{definition}

The $R_d$ just defined will be the radius up to which we can obtain good graphicality of our flow. This motivates the following definition.

\begin{definition}
\label{dfn:good-graphicality}
We  say that $M_s$ has \emph{good graphicality} on an annulus $A:= B_{r_2}\setminus B_{r_1}$ for $0<r_1<r_2<\infty$ over $V$ if $M_s \cap A$ is the graph of a (vector-valued) function $u$ over $V$ with $|u|, |\nabla u |\leq c_1$, where $c_1$ is as in Definition~\ref{dfn:V}.
\end{definition}

We have the following.

\begin{prop}\label{prop:graphical} Use the notation of Definition \ref{dfn:Rd}. 
  There are constants $\epsilon, A > 0$  and $p > p_0$  such that if $D_V(M_{\tau-1})
  =d < \epsilon$, 
  then $M_{\tau+1}$ is the graph of a (vector-valued) function $u$ over $V$ on the
  annulus $B_{R_d} \setminus B_{Ad^{1/2}}$, satisfying the following
  estimates:
  \begin{itemize}
    \item for $1 < |\mathbf{x}| < R_d$ we have $|u|, |\nabla u| \leq A
      e^{|\mathbf{x}|^2 / 8p}d$;
     \item for $Ad^{1/2} < |\mathbf{x}| < 2$ we have $|\mathbf{x}|^{-1}|u|, |\nabla u|
       \leq Ad |\mathbf{x}|^{-2}$.
   \end{itemize}
\end{prop}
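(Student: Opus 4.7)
The plan is to iteratively apply Lemma~\ref{lem:graphextension} at many scales to propagate the graphicality of $M_{\tau+1}$ over $V$, starting from the fixed annulus $B_2\setminus B_1$ where graphicality is already in hand. First, from $D_V(M_{\tau-1})=d<\epsilon$ together with Proposition~\ref{prop:Dgrowth1} (choosing $\epsilon$ accordingly), $M_{\tau+1}$ is still graphical with small $C^1$-norm on $B_2\setminus B_1$. Second, Lemma~\ref{lem:IVest}(2), applied in two one-step increments from $\tau-1$ to $\tau+1$, yields a pointwise estimate of the form
\[
  |\mathbf{x}|^2 d_V(\mathbf{x})^2 + |\theta-\theta_V|^2 \leq C e^{|\mathbf{x}|^2/4p}\, d^2
\]
on $M_{\tau+1}$, for some $p>p_0$ fixed once and for all. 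Third, Lemma~\ref{lem:intHbound} controls $\int|\mathbf{H}|^2$ in small parabolic cylinders based at points $\mathbf{x}_0$ with $|\mathbf{x}_0|\leq R_d$.

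For the outward extension, I would cover $B_{R_d}\setminus B_1$ by parabolic balls at geometrically increasing scales. At a point $\mathbf{x}_0$ with $|\mathbf{x}_0|=r\in[1,R_d]$, rescale by $r^{-1}$ so that $B_{r/4}(\mathbf{x}_0)$ becomes a unit ball; the cone $V$ is invariant and, away from the origin, has arbitrarily small second fundamental form on the region of interest. The rescaled distance to $V$ near the rescaled $\mathbf{x}_0$ is bounded by $r^{-1}d_V\leq C r^{-2} e^{r^2/8p}d$, and using the defining relation $d^2 e^{R_d^2/4p_0}=1$ together with $p>p_0$, this is at worst of order $d^{1-p_0/p}$, which tends to $0$ with $d$. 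Lemma~\ref{lem:intHbound} simultaneously gives a rescaled bound $\int|\mathbf{H}|^2=O(d^\alpha)$ in the relevant cylinder. Lemma~\ref{lem:graphextension} (in the rescaled coordinates) then extends the graphicality from scale $r$ to scale $2r$, and iterating from the base annulus $B_2\setminus B_1$ outward produces graphicality throughout $B_{R_d}\setminus B_1$.

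For the inward extension the procedure is analogous at scales $r$ decreasing from $1$ down to $Ad^{1/2}$. At $|\mathbf{x}_0|=r\leq 1$ the exponential factor is bounded, so the rescaled distance to $V$ is controlled by $r^{-1}d_V\leq C r^{-2}d$; at $r=Ad^{1/2}$ this is $CA^{-2}$, which is small once $A$ is chosen large. The $|\mathbf{H}|^2$ bound is again provided by Lemma~\ref{lem:intHbound}. Applying Lemma~\ref{lem:graphextension} at each such scale and iterating inward from $B_2\setminus B_1$ yields the graphicality on $B_2\setminus B_{Ad^{1/2}}$. Once graphicality is established, the pointwise estimates on $u$ and $\nabla u$ follow: $|u|$ is equivalent to $d_V$, which by the pointwise bound satisfies $d_V\leq C|\mathbf{x}|^{-1}e^{|\mathbf{x}|^2/8p}d$, giving both claimed regimes (exponential for $1<|\mathbf{x}|<R_d$ and $|\mathbf{x}|^{-2}d$ behavior for $Ad^{1/2}<|\mathbf{x}|<2$); the $|\nabla u|$ bounds come from standard interior parabolic (Schauder/scaling) estimates applied at each scale once the flow is known to be graphical.

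The main obstacle is the delicate outer scale $R_d$: the pointwise distance bound carries an exponential growth factor $e^{|\mathbf{x}|^2/8p}$ which, at $|\mathbf{x}|=R_d$, is of size $d^{-p_0/p}$, so the distance-to-scale ratio ceases to be polynomially small. Making the iteration work up to exactly $R_d$ requires exploiting the strict gap $p>p_0$ between the exponent used in the pointwise estimate and the one appearing in the definition of $R_d$, and tracking that this gap yields a residual factor $d^{1-p_0/p}\to 0$ which survives each rescaling. A milder but analogous issue arises at the inner scale, where the constant $A$ must be taken large enough to compensate for the growth of $d_V/|\mathbf{x}|$ as $|\mathbf{x}|\to 0$.
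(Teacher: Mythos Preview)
Your overall plan---propagating graphicality from the seed annulus $B_2\setminus B_1$ both outward and inward via repeated applications of Lemma~\ref{lem:graphextension}, with the $\delta$-neighbourhood condition supplied by Lemma~\ref{lem:IVest}(2) and the $\int|\mathbf{H}|^2$ condition by Lemma~\ref{lem:intHbound}---is exactly what the paper does, and your inward argument (rescaling by $|\mathbf{x}_0|^{-1}$ for $|\mathbf{x}_0|<1$, so that the rescaled distance is $O(A^{-1})$ at the innermost scale) matches the paper essentially verbatim.

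The outward extension, however, has a real gap. You propose to rescale parabolically by $r^{-1}$ at a point with $|\mathbf{x}_0|=r\in[1,R_d]$. But the flow you have at your disposal, say the unrescaled $L_t$ with $L_{-1}=M_{\tau}$, lives on a time interval of length $O(1)$; after parabolic rescaling by $r^{-1}$ this becomes an interval of length $O(r^{-2})$, which for large $r$ is far too short to invoke Lemma~\ref{lem:graphextension} (which needs $[-4,0]$). Likewise Lemma~\ref{lem:intHbound} only bounds $\int|\mathbf{H}|^2$ on parabolic cylinders of a \emph{fixed} spatial radius $r_0$ and time depth $r_0^2$; it does not give you cylinders of radius $\sim r$ with time depth $\sim r^2$, since the flow simply is not defined that far back.

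The paper's remedy is to abandon the scale-dependent blow-down and instead iterate at the \emph{fixed} scale $r_0$ furnished by Lemma~\ref{lem:intHbound}. What makes this work is that the pointwise distance $d_V$ is not merely small relative to $|\mathbf{x}|$, but is small \emph{uniformly} on all of $B_{R_d}\setminus B_1$: from $d_V^2 \leq C|\mathbf{x}|^{-2}e^{|\mathbf{x}|^2/4p}d^2$ and the strict gap $p>p_0$ one gets $d_V^2 \leq C e^{R_d^2(p_0-p)/4pp_0}\to 0$ as $d\to 0$, independent of $|\mathbf{x}|$. Thus after one global rescaling by $4r_0^{-1}$ (so that the $r_0$-cylinders become unit-sized), conditions (1)--(3) of Lemma~\ref{lem:graphextension} hold at every point of the annulus, and graphicality is propagated outward one unit step at a time. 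Your discussion of the delicate margin $d^{1-p_0/p}$ is the right observation, but it should be used to establish uniform smallness of $d_V$ at fixed scale, not smallness of $r^{-1}d_V$ at scale $r$.
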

\begin{proof}  Note that 
  by Proposition~\ref{prop:Dgrowth1},  $M_s$ has good graphicality
  over $V$ (in the sense of Definition~\ref{dfn:good-graphicality}) on  $B_2\setminus B_1$ for $s\in
  [\tau, \tau+1]$. In addition, property (2) in
  Lemma~\ref{lem:IVest} shows that there is $C>0$ and $p>p_0$ such that  
  \[\label{eq:dVbound} d_V^2 \leq C |\mathbf{x}|^{-2} e^{|\mathbf{x}|^2/4p} d^2 \]
  on $M_s$ for $s\in [\tau,\tau+1]$.
Therefore, by the definition of $R_d$ in \eqref{eq:Rddefn}, if $1 < |\mathbf{x}| < R_d$ we have
  \begin{equation} \label{eq:dVbound10}
    d_V^2 \leq C e^{R_d^2(p_0-p)/4pp_0}.
  \end{equation}
  In particular, recalling that $p>p_0$, if we  let $d$ be small, so that $R_d$ is large by \eqref{eq:Rddefn}, we can
  ensure that $d_V$ is as small as we like on the annulus
  $B_{R_d}\setminus B_1$ along the rescaled flow $M_{s}$ for
  $s\in[\tau,\tau+1]$.

  Applying Lemma~\ref{lem:intHbound} with $t_0 = -e^{-1}$ we see that for all $\bx_0$ with
  $ \exp(-\tfrac{|\bx_0|^2}{4pt_0}) \leq d^{-2}$ we have
  \[\label{eq:bound-int-h2}
   r_0^{-2} \int_{t_0-r_0^2}^{t_0} \int_{L_t\cap B_{r_0}(\bx_0)} |\mathbf{H}|^2 \leq C d^\alpha.\]
  Rescaling the flow parabolically such that $L_{-1} =
  M_{\tau+1}$ (i.e.~scaling parabolically by $e^{1/2}$) this  implies
 that for any $\mathbf{x}_0\in B_{R_d}$ we have a
  backwards parabolic ball $[-1-r_0^2, -1]\times B_{r_0}(\bx_0)$ on which the 
  spacetime integral of $|\mathbf{H}|^2$ is bounded by
  $Cr_0^2d^\alpha$. Note that the constant changes by a controlled factor due to the rescaling.
  
  Recall that
  the second fundamental form of $V$ vanishes and that $M_s$ has good graphicality over $V$ on $B_2\setminus B_1$ for $s \in [\tau,\tau+1]$. Since $L_{-1} = M_{\tau+1}$ this implies that $L_t$ is the graph of a (vector-valued) function $\hat u$ over $V$ on $\sqrt{-t} (B_2\setminus B_1)$ with $|\hat u| \leq \sqrt{-t}\cdot c_1, |\nabla \hat u| \leq c_1$ for $t\in [-e, -1]$.
  
  From \eqref{eq:bound-int-h2}, scaling the flow by $4r_0^{-1}$ and shifting the origin accordingly, by taking $d$ sufficiently small, we see for every $\bx_0 \in B_{4 r_0^{-1}R_d}\setminus B_{4r_0^{-1}}$ that \eqref{eq:bound-int-h2} implies that the hypotheses (1)--(3) of 
  Lemma~\ref{lem:graphextension} (with $\varepsilon = c_1$) are satisfied.  Thus, undoing the scaling and shifting of the origin,
  the good graphicality of $L_{t}$ for $t\in [-1-r_0^2, -1]$ over $V$
  can be propagated out from 
  the ball $\sqrt{-t}(B_2\setminus B_1)$ if $d$ is sufficiently small, to the
  annulus $\sqrt{-t}(B_{R_d}\setminus B_1)$ on which $L_{t}$ is still in a
  sufficiently small neighbourhood of $V$. Note that once we have good
  graphicality of the flow on a parabolic ball, then on a smaller
  parabolic ball $|\nabla \hat u|$ can
  be bounded in terms of $|\hat u|$ by standard parabolic theory. This introduces the constant $A$ in the claimed estimates.
  
  The argument for extending graphicality to the annulus $B_2\setminus
  B_{Ad^{1/2}}$ is similar. Here the distance bound
  \eqref{eq:dVbound} implies that $d_V \leq C |\mathbf{x}|^{-1} d$ on $L_{t}$ for $t\in [-1,-e]$ (recalling that $L_{-1}=M_{\tau+1}$). Suppose
  that $\mathbf{x}_0\in V$ with $Ad^{1/2}\leq |\mathbf{x}_0| < 2$, and let $r = A^{-1}|\mathbf{x}_0|$.  If $A$ is sufficiently large, then $V\cap B_r(\mathbf{x}_0)$ is a plane, so
  it has zero second fundamental form. In addition
  for $t\in[-1,e]$ the distance from $L_{t}$ to $V$ on $B_r(\mathbf{x}_0)$ is
  bounded by $C |\mathbf{x}_0|^{-1}d$, where $C$ can be chosen
  independent of sufficiently large $A$. After scaling up by $r^{-1}$ the
  distance from $r^{-1}L_{t}$ to $V$ on $r^{-1}B_r(\mathbf{x}_0)$ is bounded above by
  $CA |\mathbf{x}_0|^{-2} d \leq CA^{-1}$ since $|\mathbf{x}_0|\geq Ad^{1/2}$. By choosing
  $A$ large enough, we can again ensure that
  Lemma~\ref{lem:graphextension} can be applied repeatedly to extend
  the region of good graphicality. The
  required estimate for $|u|$ follows from the pointwise bound for
  $d_V$, while the estimate for $|\nabla u|$ in
  the annular region $B_2\setminus B_{Ad^{1/2}}$ follows by standard
  parabolic theory and scaling parabolic balls of the form $[t-r^2,
  t]\times B_r(\mathbf{x})$ to unit size, where $r = |\mathbf{x}|/2$. 
\end{proof}

\subsection{Excess and distance}
The graphicality bound from Proposition \ref{prop:graphical} implies the following estimate for the excess
$\mathcal{A}$ in \eqref{eq:AMdefn} in terms of $D_V(M)$.
\begin{prop}\label{prop:Aest}
  There is a small $\alpha_1 > 0$ such that if $D_V(M_{\tau-1})$ is
  sufficiently small, then  
  $$\label{eq:Aest}|\mathcal{A}(M_\tau)| \leq D_V(M_{\tau-1})^{1+\alpha_1}.$$
\end{prop}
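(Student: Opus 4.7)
Writing $d := D_V(M_{\tau-1})$, my plan is to split $\mathcal{A}(M_\tau)$ into the Gaussian area excess $E(M_\tau) := \int_{M_\tau} e^{-|\mathbf{x}|^2/4} - 2\int_{\mathbb{R}^2} e^{-|\mathbf{x}|^2/4}$ and the angle excess $T(M_\tau) := \inf_{\theta_0}\int_{M_\tau}|\theta - \theta_0|^2 e^{-|\mathbf{x}|^2/4}$, and to estimate each separately. The angle term I would dispose of immediately by taking $\theta_0 = \theta_V$: Lemma~\ref{lem:IVest}(1) together with $I_V \leq D_V$ yields $T(M_\tau) \leq I_V(M_\tau)^2 \leq C d^2$, which is bounded by $d^{1+\alpha_1}$ for any $\alpha_1 < 1$.

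For the area excess I would introduce the Lagrangian calibration $\phi = \mathrm{Re}(e^{-i\theta_V}\Omega)$, which is closed and satisfies $\phi|_L = \cos(\theta - \theta_V)\, dA|_L$ on any Lagrangian $L$. Since $V$ is special Lagrangian with angle $\theta_V$, $\phi|_V = dA|_V$, so bounding $1 - \cos(\theta-\theta_V) \leq \tfrac{1}{2}|\theta-\theta_V|^2$ and applying Lemma~\ref{lem:IVest}(1) gives
$$E(M_\tau) = \int_{M_\tau}\tilde\phi - \int_V \tilde\phi + O(d^2), \qquad \tilde\phi := e^{-|\mathbf{x}|^2/4}\phi.$$
The outer contribution from $\{|\mathbf{x}|>R_d\}$ is exponentially suppressed by Definition~\ref{dfn:Rd}, of order $O(d^{p_0})$. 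Within $B_{R_d}$ I would apply Stokes' theorem to a $3$-chain $W$ with $\partial W = (M_\tau - V)\cap B_{R_d}$ plus corrections at $\partial B_{R_d}$ which are again $O(d^{p_0})$, reducing the problem to bounding
$$\int_W d\tilde\phi = -\tfrac{1}{2}\int_W e^{-|\mathbf{x}|^2/4}\, \mathbf{x}^\flat \wedge \phi.$$

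To estimate this bulk integral I split by radius. On the inner ball $B_{Ad^{1/2}}$, the closed $2$-cycle consisting of $(M_\tau - V) \cap B_{Ad^{1/2}}$ together with the graphical edge on $\partial B_{Ad^{1/2}}$ has area $\leq CA^2 d$ (by area ratios and $|u| \leq Ad^{1/2}$ at the boundary), hence bounds a $3$-chain $W' \subset B_{Ad^{1/2}}$ of $3$-volume $\leq CA^3 d^{3/2}$ via isoperimetric; since $|\mathbf{x}^\flat \wedge \phi| \leq C|\mathbf{x}| \leq CAd^{1/2}$ here, the inner contribution is $O(d^2)$. On the graphical annulus $\{Ad^{1/2} < |\mathbf{x}| < R_d\}$, Proposition~\ref{prop:graphical} lets me parameterize $W$ as the cylinder $(p, s) \mapsto p + s u(p)$ with $p \in V$, $s \in [0,1]$. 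The crucial observation I would verify is that at points of $V$ the form $\mathbf{x}^\flat \wedge \phi$ evaluated on the frame $(e_1, e_2, u)$ vanishes: indeed $\mathbf{x} \cdot u = 0$ since $\mathbf{x}$ is tangent to the cone $V$ while $u$ is normal to $V$, and $\phi(e, u) = 0$ for $e \in TV$, $u \in NV$ because $V$ is calibrated by $\phi$. Hence the integrand of $\int_W d\tilde\phi$ is at least quadratic in $u, \nabla u$ along the fiber; substituting the pointwise bounds of Proposition~\ref{prop:graphical} and integrating yields $O(d^2|\log d|)$ on the annulus. Summing, $|\mathcal{A}(M_\tau)| \leq C d^2 |\log d| + O(d^{p_0}) \leq d^{1+\alpha_1}$ for any $\alpha_1 \in (0, \min\{1, p_0 - 1\})$ and sufficiently small $d$.

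The hard part will be extracting the quadratic (rather than linear) behavior of the bulk integrand on the graphical annulus. Without the cancellation above -- which reflects the fact that $V$ is a critical point of the Gaussian-weighted area (equivalently, a static solution of the rescaled flow) -- the naive use of $|\nabla u| \leq Ad|\mathbf{x}|^{-2}$ alone would give only $O(d)$ over the inner annular region, which is insufficient for the desired exponent.
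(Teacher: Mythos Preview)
Your argument is correct and in fact sharper than the paper's: you obtain the estimate with any $\alpha_1<1$, whereas the paper only gets $\alpha_1=1/10$. Both proofs handle the angle term identically and both use the calibration $\mathrm{Re}(e^{-i\theta_V}\Omega)$ together with Stokes' theorem and the isoperimetric inequality on the non-graphical core. The difference lies in how the graphical region is treated.

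The paper splits the graphical region into three annuli and on each compares the pulled-back area form $dA_{M_\tau}$ to $dA_V$ directly, using only that $V$ is minimal to see that the difference is quadratic in $(u,\nabla u)$; the calibration/Stokes argument is reserved for the small ball $B_{d^{1/10}}$, and the loss in the exponent comes from the isoperimetric step applied at that intermediate scale. You instead push the Stokes argument across the entire region $B_{R_d}$ and observe that the bulk integrand $e^{-|\mathbf{x}|^2/4}\,\mathbf{x}^\flat\wedge\phi$ vanishes to \emph{first} order along $V$ in the normal direction --- this uses not just that $V$ is minimal but that it is a calibrated \emph{cone} (equivalently a self-shrinker), so both $\mathbf{x}\cdot u=0$ and $\phi(e,u)=0$ hold. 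That cancellation is exactly the first-variation identity for Gaussian area at the shrinker $V$, and it is what makes the integrand effectively quadratic after integrating over the fibre $s\in[0,1]$. Applying the isoperimetric/coning step only on $B_{Ad^{1/2}}$, where $|\mathbf{x}|\le Cd^{1/2}$ provides an extra half-power, then gives $O(d^2)$ there as well.

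Two minor remarks. First, your outer-region bound is actually $O(d^{2p_0})$ rather than $O(d^{p_0})$, since $e^{-R_d^2/4}=(e^{-R_d^2/4p_0})^{p_0}=d^{2p_0}$; this only improves your conclusion and removes the need for the constraint $\alpha_1<p_0-1$. Second, to ensure $W'\subset B_{Ad^{1/2}}$ you can simply cone the closed $2$-cycle from the origin rather than invoke the isoperimetric inequality; the coning bound $\mathcal{H}^3(W')\le C\,\mathrm{diam}\cdot\mathcal{H}^2(\partial W')$ already gives the $O(d^{3/2})$ volume you need and keeps the chain inside the ball.
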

\begin{proof}
By Definition~\ref{dfn:IVDV} and Proposition~\ref{prop:Dgrowth1} we have
  $$\int_{M_\tau} |\theta-\theta_V|^2 e^{-|\mathbf{x}|^2/4} \leq
    D_V(M_\tau)^2 \leq CD_V(M_{\tau-1})^2. $$
Therefore, by formula \eqref{eq:AMdefn} we only need to estimate the difference between the Gaussian
  areas of $M_\tau$ and   $V$ (recalling that $V$ is a pair of planes). 
  
  Let $d=D_V(M_{\tau-1})$ and recall $p_0>1$ and $R_d>0$ given in Definition~\ref{dfn:Rd}.  We also recall the constants $\epsilon,A>0$ from Proposition~\ref{prop:graphical} and we assume that $d<\epsilon$.  We further assume that $d$ is
    sufficiently small so that $d^{1/10} > Ad^{1/2}$.
  
  We study four regions
  separately.
  
\smallskip
\paragraph{\textbf{(a)} $|\mathbf{x}|>R_d$.}  By the area growth bounds for $M_\tau$ we have constants $C,k>0$ such that
    $$ \int_{M_\tau\setminus B_{R_d}} e^{-|\mathbf{x}|^2/4} \leq CR_d^k
      e^{-R_d^2/4}.$$
   Once $R_d$ is sufficiently large, 
   $$CR_d^k
    e^{-R_d^2/4} \leq e^{-R_d^2/4p_0} = d^2.$$
    The same integral bound
    also holds on $V\setminus B_{R_d}$ so the required estimate \eqref{eq:Aest}
    holds on this region with $\alpha_1=1$.

\smallskip
\paragraph{\textbf{(b)} $1 < |\mathbf{x}| < R_d$.} Here
    Proposition~\ref{prop:graphical} states that $M_\tau$ is the graph
    of $u$ over $V$, with $|u|, |\nabla u|\leq
    Ae^{|x|^2/8p}d$, for some $p > p_0>1$. By the definition of $R_d$ in \eqref{eq:Rddefn},
    we have $Ae^{R_d^2/8p}d \to 0$ as $d\to 0$. It follows that the
    area form $dA_{M_\tau}$ of $M_\tau$, pulled back to $V$, can be
    compared to the area form $dA_V$ of $V$ as follows:
    $$ \left|\frac{dA_{M_{\tau}}}{dA_V} -1\right|\leq  C e^{|\mathbf{x}|^2 / 4p} d^2, $$
    for some constant $C$. 
    Note that the difference in the area forms is at least quadratic
    in $u$ since $V$ has zero mean curvature. Integrating, we have
    $$ \left|\int_{M_\tau\cap (B_{R_d}\setminus B_1)} e^{-|\mathbf{x}|^2/4} -
      \int_{V\cap (B_{R_d}\setminus B_1)} e^{-|\mathbf{x}|^2/4}\right| \leq Cd^2
      \int_{V\cap  (B_{R_d}\setminus B_1)} e^{|\mathbf{x}|^2/4p}
      e^{-|\mathbf{x}|^2/4}. $$
    Since $p>1$ the last integral is bounded
    independently of $d$, and so the required estimate \eqref{eq:Aest} holds on this
    region too, with $\alpha_1=1$. 

\smallskip
\paragraph{\textbf{(c)} $d^{1/10} < |\mathbf{x}| < 1$.}  Since we have assumed that $d^{1/10} > Ad^{1/2}$, 
    Proposition~\ref{prop:graphical} implies that on this
    region $M_\tau$ is the graph of $u$ over $V$, with 
    $|\mathbf{x}|^{-1}|u|, |\nabla u| \leq Ad |\mathbf{x}|^{-2}$. Similarly to (b)
    we can again compare the area forms:
    $$ \left|\frac{dA_{M_{\tau}}}{dA_V} -1\right|\leq C d^2 |\mathbf{x}|^{-4}. $$
    Integrating, we have
    $$\begin{aligned} \Bigg| \int_{M_\tau\cap (B_1\setminus B_{d^{1/10}})} e^{-|\mathbf{x}|^2/4} &-
      \int_{V\cap (B_1\setminus B_{d^{1/10}})} e^{-|\mathbf{x}|^2/4}\Bigg|\\
       &\leq  Cd^2
      \int_{V\cap  (B_1\setminus B_{d^{1/10}})} |\mathbf{x}|^{-4}
      e^{-|\mathbf{x}|^2/4} \\
      &\leq C d^2 d^{-4/10} = C d^{8/5}.
    \end{aligned} $$
    The required estimate \eqref{eq:Aest} therefore holds with $\alpha_1 = 3/5$.  

    \smallskip
    \paragraph{\textbf{(d)} $|\mathbf{x}|
 < d^{1/10}$} 
 Let us write $r_0=d^{1/10}$ for the radius of this ball for simplicity.  
 As in (c), Proposition~\ref{prop:graphical} implies that the cross section $M_\tau \cap \partial B_{r_0}$  is an exponential normal
  graph in the sphere $\partial B_{r_0}$ of a function $\tilde u$  over $V\cap \partial B_{r_0}$, where $\tilde u$ satisfies
$r_0^{-1} |\tilde u|, |\nabla \tilde u| \leq Ad r_0^{-2} = Ad^{4/5}$. The
  cross section $V\cap \partial B_{r_0}$ of $V$ is
  minimal in the sphere (a union of geodesics), so the cross section of $M_\tau$ has length
  \[\label{eq:length.bound} \Big|\mathcal{H}^1(M_\tau \cap \partial B_{r_0}) - \mathcal{H}^1(V\cap \partial B_{r_0})\Big| \leq
    Cd^{8/5} r_0 = C d^{17/10}. \]
  Let $V_{\tau}$ be the cone over $M_\tau\cap \partial
  B_{r_0}$. By \eqref{eq:length.bound}, the area of $V_{\tau}$ then satisfies
  \[\label{eq:V'bound}
   \Big| \mathcal{H}^2(V_{\tau} \cap B_{r_0})  - \mathcal{H}^2(V\cap B_{r_0}) \Big| \leq 
    C r_0 d^{17/10} = C d^{9/5}. \]
 On $M_\tau$ we have $|\theta-\theta_V| \leq Cd$ by property (2) in Lemma~\ref{lem:IVest}, noting that the exponential factor in property (2) can be bounded independently of $d$ for $d$ small.  Hence, up to rotating
 the holomorphic volume form $\Omega$  so that we can assume $\theta_V=0$,  we have
 that
 $$ \mathrm{Re}\,\Omega|_{M_\tau} \leq dA_{M_\tau} \leq
   (1 + Cd^2) \mathrm{Re}\,\Omega|_{M_\tau}.$$
 Therefore
 \[\label{eq:A21}
   \Bigg|\int_{M_\tau \cap B_{r_0}} e^{-|\mathbf{x}|^2/4} dA_{M_\tau} &-    \int_{M_\tau\cap B_{r_0}} e^{-|\mathbf{x}|^2/4}    \mathrm{Re}\,\Omega|_{M_\tau}\Bigg|\\&\leq Cd^2  \left|\int_{M_\tau\cap B_{r_0}} e^{-|\mathbf{x}|^2/4}     \mathrm{Re}\,\Omega|_{M_\tau}\right|.    
    \] 
  At the same time, there is a hypersurface $U_{\tau}$ in $B_{r_0}$ bounded by $M_{\tau}$ and $V_{\tau}$ so that
  \begin{equation}\label{eq:A22} \int_{M_\tau \cap B_{r_0}} e^{-|\mathbf{x}|^2/4}\, \mathrm{Re}\,\Omega
    = \int_{V_{\tau} \cap B_{r_0}} e^{-|\mathbf{x}|^2 / 4}\,
    \mathrm{Re}\,\Omega + \int_{U_{\tau}} \rd(e^{-|\mathbf{x}|^2/4})\wedge
    \mathrm{Re}\,\Omega, \end{equation}
since $\rd\Omega=0$.  
  We have $\rd(e^{-|\mathbf{x}|^2/4}) = -\frac{|\mathbf{x}|}{2} e^{-|\mathbf{x}|^2/4}
  \rd|\mathbf{x}|$, so it follows that
  \begin{equation}\label{eq:A23} \left|\int_{U_{\tau}} \rd(e^{-|\mathbf{x}|^2/4})\wedge
    \mathrm{Re}\,\Omega \right|\leq C r_0\mathcal{H}^3(U_{\tau}) = C d^{1/10} \mathcal{H}^3(U_{\tau}).
    \end{equation}
  
  Let us write $U_{\tau} = U_{\tau,1}\cup U_{\tau,2}$, where 
  $$U_{\tau,1}=U_{\tau}\cap B_{Ad^{1/2}}\quad\text{and} \quad U_{\tau,2} = U_{\tau} \setminus B_{Ad^{1/2}}.$$
  Note that outside of
  $B_{Ad^{1/2}}$ the surface $M_\tau$ is still the graph of some $u$
  over $V$ satisfying $|\mathbf{x}|^{-1}|u| , |\nabla u| \leq Ad
  |\mathbf{x}|^{-2}$ by Proposition~\ref{prop:graphical}. The cone $V_{\tau}$ is also the graph of a function $v$ over
  $V$ with $|\mathbf{x}|^{-1} |v|, |\nabla v| \leq C d^{4/5}$ by construction. It follows that,
  once $d$ is small, $M_\tau$ is the graph of some $\tilde{u}$ over $V_{\tau}$
  with $|\mathbf{x}|^{-1}|\tilde{u}|, |\nabla \tilde{u}| \leq Ad |\mathbf{x}|^{-2}$ (recalling that $|\mathbf{x}|^2<d^{1/5}$ in the region under consideration). We
  can then choose $U_{\tau}$ so that $U_{\tau,2}$ is the hypersurface swept out by
  the graphs of $s \tilde{u}$ over $V_{\tau}$ for $s\in [0,1]$. We 
  estimate the volume of $U_{\tau,2}$ by the integral
  $$ \mathcal{H}^3(U_{\tau,2}) \leq C \int_{Ad^{1/2}}^{d^{1/10}} \frac{Ad}{r}\, r\, dr
    \leq C d^{11/10}. $$

  Finally we choose $U_{\tau,1}$ to minimize area, such that its boundary is
  given by the union of $U_{\tau,2}\cap \partial B_{Ad^{1/2}}$, $M_\tau\cap
  B_{Ad^{1/2}}$ and $V_{\tau}\cap B_{Ad^{1/2}}$. 
  The isoperimetric inequality then  implies that 
  $$\mathcal{H}^3(U_{\tau,1}) \leq C \mathcal{H}^2(\partial
  U_{\tau,1})^{3/2}.$$ 
  To estimate   $\mathcal{H}^2(\partial U_{\tau,1})$, consider the
  three pieces of the boundary. In the sphere $\partial
  B_{Ad^{1/2}}$ the cross sections of $M_{\tau}$ and $V_{\tau}$ are graphs
  of functions bounded by $d^{1/2}$, so $\mathcal{H}^2(U_{\tau,2}\cap
  \partial B_{Ad^{1/2}}) \leq Cd$. From \eqref{eq:V'bound} we know
  that the boundary piece $V_{\tau}\cap
  B_{Ad^{1/2}}$ also has area bounded by $Cd$. To control 
  $\mathcal{H}^2(M_\tau\cap B_{Ad^{1/2}})$ it is enough to use that $M_\tau\cap B_{Ad^{1/2}}$ is
  almost calibrated, which follows from the fact that $|\theta-\theta_V|\leq Cd$ and $d$ is small. As in  
    \cite[Lemma 7.1]{Neves:zero-maslov} we
  have
  $$ \mathcal{H}^2(M_\tau\cap B_{Ad^{1/2}}) \leq C \mathcal{H}^1(M_\tau \cap \partial
    B_{Ad^{1/2}})^2  \leq Cd. $$
  In sum, it follows that with this choice of $U_{\tau}$ we
  have
   $$\mathcal{H}^3(U)\leq
  C(d^{3/2} + d^{11/10}) \leq Cd^{11/10}.$$
Therefore, using \eqref{eq:A21}, \eqref{eq:A22} and \eqref{eq:A23}, we
have
$$ \int_{M_\tau \cap B_{r_0}} e^{-|\mathbf{x}|^2/4}\, dA_{M_{\tau}} \leq (1 +
  Cd^2)\left( \int_{V\cap B_{r_0}} e^{-|\mathbf{x}|^2/4} +
    Cd^{18/10}+ Cd^{12/10}\right). $$
Here the term involving $d^{18/10}$ is obtained from comparing the area forms of
$V_{\tau}$ and $V$. 

\smallskip

Combining our estimates on the different regions (a)-(d) we have
$$ \left|\int_{M_\tau}e^{-|\mathbf{x}|^2/4} - \int_{V}
  e^{-|\mathbf{x}|^2/4}\right| \leq Cd^{12/10}, $$
and so the required estimate \eqref{eq:Aest} for $\mathcal{A}(M_\tau)$ holds with $\alpha_1 = 1/10$, once $d$ is sufficiently small.  
\end{proof}

\section{Limiting solutions of drift heat equation}\label{sec:heateqnlimits}

In this section we show that from a sequence of rescaled flows whose initial conditions are getting closer and closer to the pair of planes $V$, we can extract in the limit a solution to the drift heat equation which, after removing a leading order singular term, is defined on each plane $P_1,P_2$ in $V$.  We also show that this solution will satisfy good estimates.
  
\subsection{Sequences of rescaled flows}  We will need to pass to
limits along sequences of rescaled 
flows.
It is here that the local exactness imposed in Condition \eqref{cond:ast} will
begin to
play a role.
First we have the following, showing that Condition \eqref{cond:ast} is preserved
along the flow as long as $D_V(M_\tau)$ stays sufficiently small.
\begin{lemma}\label{lem:condast}
  There is an $\epsilon > 0$ satisfying the following.
  Suppose   the flow $M_\tau$ satisfies Condition \eqref{cond:ddag} for
  $\tau\in [-1, T]$, with $T > 0$, and $D_V(M_\tau) < \epsilon$ for
  $\tau\in [-1,T]$. If $M_0$ satisfies Condition \eqref{cond:ast} then $M_\tau$
  satisfies Condition \eqref{cond:ast} for $\tau\in [0,T]$. 
\end{lemma}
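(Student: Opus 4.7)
The plan is to combine a spacetime topological construction with the evolution of $\lambda$-periods along the rescaled Lagrangian mean curvature flow. First I would use that the hypothesis $D_V(M_\tau) < \epsilon$ makes $M_\tau$ a $C^1$-small graph over $V$ on $B_2\setminus B_1$ for every $\tau\in[-1,T]$ (Definition~\ref{dfn:IVDV}); since $V$ meets $\partial B_1$ transversally in two disjoint circles, so does $M_\tau$, uniformly in $\tau$, and $M_\tau\cap\partial B_1$ consists of two circles close to $V\cap\partial B_1$.

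Combining this transversality with the smoothness of the flow on $[0,T]$, the spacetime set $\mathcal{M}_1 := \{(\bx,\tau)\in\overline{B_1}\times[0,T]\,:\,\bx\in M_\tau\}$ is a smooth $3$-manifold (with corners) whose time projection onto $[0,T]$ is a proper submersion, making it a fibre bundle of surfaces with boundary. Applying Ehresmann's theorem with a time-lifting vector field chosen tangent to the boundary tube $\bigcup_\tau(M_\tau\cap\partial B_1)\times\{\tau\}$, I would obtain a smooth isotopy $\psi_\tau : M_0\cap\overline{B_1}\to M_\tau\cap\overline{B_1}$ for $\tau\in[0,T]$. In particular the topological type of $M_\tau\cap B_1$ is constant in $\tau$, so the connectedness half of Condition~\eqref{cond:ast} is immediate.

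For exactness I would track $P(\tau) := \int_{\psi_\tau(\gamma_0)} \lambda$ for an arbitrary closed loop $\gamma_0\subset M_0\cap B_1$. The velocity $W$ of a point of $\psi_\tau(\gamma_0)$ decomposes as $W = \bH + \bx^\perp/2 + X$, where the first two terms are the rescaled normal velocity \eqref{eq:rescaled.flow} and $X\in TM_\tau$ is the tangential correction coming from the trivialisation. Cartan's formula together with $d\lambda = 2\omega$ (and the vanishing of the exact piece on a closed curve) gives $P'(\tau) = 2\int_{\psi_\tau(\gamma_0)} i_W\omega$. On $TM_\tau$ one has $i_X\omega = 0$ by the Lagrangian condition; $\bH = J\nabla\theta$ yields $i_{\bH}\omega = -d\theta$, which integrates to zero since $\theta$ is globally defined on the zero-Maslov $M_\tau$; and the standard identity $i_{\bx}\omega = \lambda$ combined with $\omega|_{M_\tau}=0$ gives $i_{\bx^\perp/2}\omega = \lambda/2$ on $TM_\tau$. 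Hence $P'(\tau) = P(\tau)$, so $P(\tau) = e^\tau P(0) = 0$ by Condition~\eqref{cond:ast} at $\tau = 0$. Since $\psi_\tau$ induces an isomorphism on $H_1$ and $\lambda|_{M_\tau}$ is closed, periods depend only on homology class, so every closed loop in $M_\tau\cap B_1$ has vanishing $\lambda$-period.

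The main technical point I expect to need care with is setting up the Ehresmann trivialisation so that the boundary tube is respected and $\psi_\tau$ actually sends $M_0\cap B_1$ into $M_\tau\cap B_1$. This is standard but relies on choosing the time-lifting vector field on $\mathcal{M}_1$ to be tangent to $\bigcup_\tau(M_\tau\cap\partial B_1)\times\{\tau\}$; this choice is available thanks to the uniform transversality of $M_\tau$ with $\partial B_1$ established in the first step.
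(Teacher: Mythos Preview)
Your proposal is correct and follows essentially the same approach as the paper. The paper's proof is briefer: it observes that graphicality over $V$ on the annulus prevents new components from appearing in $B_1$ (giving connectedness), and notes that along the \emph{unrescaled} flow one has $\partial_t\int_{\gamma_t}\lambda=0$, which combined with the ability to homotope loops using the graphical annulus yields exactness. Your use of Ehresmann's theorem is a more explicit packaging of the same topological fact, and your rescaled computation $P'(\tau)=P(\tau)$ is exactly the rescaled version of the paper's $\partial_t\int_{\gamma_t}\lambda=0$ (the factor $e^\tau$ coming from the dilation $M_\tau=e^{\tau/2}L_{-e^{-\tau}}$).
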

\begin{proof}
  If $\epsilon$ is sufficiently small, then $M_{\tau}$ is a smooth graph
  over $V$ on the annulus $B_2\setminus B_{1/2}$ for $\tau\in
  [0,T]$. If follows that no additional component of the flow can
  appear in $B_1$ at any time $\tau > 0$ if $M_0\cap B_1$ is
  connected. As for the exactness, if $\gamma_t$ denotes the evolution
  of a closed loop $\gamma$ along the (unrescaled) mean curvature
  flow, then $\partial_t \int_{\gamma_t} \lambda = 0$. Moreover, by the
  graphicality statement above, any closed loop $\gamma$ in
  $M_\tau\cap B_2$ is homotopic to a closed loop in $M_\tau\cap B_1$,
  for $\tau\in [0,T]$. It follows from this that if $\int_\gamma
  \lambda=0$ for any loop $\gamma\in B_1\cap M_0$, then the same holds
  for any loop $\gamma\in B_1\cap M_\tau$ for $\tau\in [0,T]$. In
  particular $M_\tau$ satisfies Condition \eqref{cond:ast} for $\tau\in
  [0,T]$. 
\end{proof}

  \begin{prop}\label{prop:limitu10}
There is a constant $C > 0$, depending only on $\mathcal{V}$,
satisfying the following. Let $T>0$ and let $M^i_{\tau}$ be a sequence
of flows defined for $\tau\in [-1,T+2]$ which satisfy 
    Condition \eqref{cond:ddag}, and such that $M^i_0$ satisfy Condition
    \eqref{cond:ast}. Suppose that $D_V(M^i_0)  =: d_i \to 
    0$.
 
 \begin{enumerate}
 \item   There exist compact sets $K_i\subset\mathbb{C}^2\setminus\{\mathbf{0}\}$
    exhausting $\mathbb{C}^2\setminus\{\mathbf{0}\}$, satisfying the following.
    For $s\in [1,T]$ the surface
    $M^i_s$ is the graph of $u_i(s)$ over $V$ on $K_i$ such
    that, up to choosing a subsequence, the $d_i^{-1}u_i$ converge
    locally smoothly on $[1,T] \times V\setminus \{\mathbf{0}\}$ to a
    solution $u(s)$ of the drift heat equation
    \begin{equation}\label{eq:dh20}
      \partial_s u = \Delta u + \frac{1}{2}(u - \mathbf{x}\cdot \nabla u).
    \end{equation}
\item      
   The limit $u$ can be identified with an exact 
    $1$-form on $V\setminus\{\mathbf{0}\}$. We write $u = (u_1, u_2)$, where
    $u_j$ is the restriction of $u$ to the plane $P_j\setminus\{\mathbf{0}\}$
    in terms of $V = P_1\cup P_2$.

    We can further
    decompose 
    \[\label{eq:u.decomp}
    u = e^s u_0 + \tilde{u}=e^s(a_1 \rd\ln |\mathbf{x}|, a_2 \rd\ln
    |\mathbf{x}|)+(\tilde{u}_{1},
    \tilde{u}_{2}),
    \] where 
    $a_1,a_2$ are constants such that $|a_1|, |a_2| \leq
    C$, and the  
$\tilde{u}_j$    extend smoothly across the origin. 
    
 \item    We
    have the following estimates at $s=1$:    
    \begin{equation}\label{eq:limituest10}
      \begin{aligned}  
      \int_{V\setminus \{\mathbf{0}\}} |\mathbf{x}|^2 |u|^2 e^{-|\mathbf{x}|^2/4} &\leq C, \\
      \sup_{B_1\cap V\setminus\{\mathbf{0}\}} |\mathbf{x}| |u| + |\rd^*u| &\leq C.
     \end{aligned}
    \end{equation}
    \end{enumerate}
  \end{prop}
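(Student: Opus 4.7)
The plan is first to establish good graphicality. The growth estimate in Proposition~\ref{prop:Dgrowth1}, iterated in unit time steps, gives $D_V(M^i_\tau) \leq C d_i$ throughout $\tau \in [-1, T+2]$, provided $c_0$ in Condition~\eqref{cond:ddag} is small enough. Applying Proposition~\ref{prop:graphical} at each $s \in [1, T]$ (with $\tau = s$) then shows that $M^i_s$ is the graph of a vector-valued function $u_i(s)$ over $V$ on the annulus $B_{R_{d_i}}\setminus B_{Ad_i^{1/2}}$, with the quantitative bounds stated there. Since $R_{d_i}\to\infty$ and $Ad_i^{1/2}\to 0$, taking $K_i$ to be slightly shrunken subannuli gives an exhaustion of $\mathbb{C}^2\setminus\{\mathbf{0}\}$. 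Because $V$ is stationary under the rescaled flow \eqref{eq:rescaled.flow}, $u_i$ satisfies \eqref{eq:dh20} up to a nonlinear error of size $|u_i||\nabla^2 u_i|+|\nabla u_i|^2 = O(d_i^2)$ on any fixed compact subset of $V\setminus\{\mathbf{0}\}$. The bounds of Proposition~\ref{prop:graphical} show that $d_i^{-1}u_i$ is locally uniformly bounded in $C^0$, and parabolic Schauder theory bootstraps this to uniform $C^k_{\mathrm{loc}}$ estimates. Passing to a subsequence yields the locally smooth limit $u$ solving \eqref{eq:dh20}.

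\textbf{Exactness and decomposition.} By Lemma~\ref{lem:condast}, Condition~\eqref{cond:ast} persists along the flow, so each $M^i_s$ is exact in $B_1$. Under the standard identification of a graphical Lagrangian perturbation of $V$ with a closed $1$-form on $V$ via the symplectic form, exactness of $M^i_s$ translates into exactness of $u_i$ on $V \cap B_1 \setminus \{\mathbf{0}\}$, i.e.~the vanishing of the period of $u_i$ around a small loop encircling $\mathbf{0}$ in each plane. Passing to the limit, $u$ is closed on $V\setminus\{\mathbf{0}\}$ with vanishing period around $\mathbf{0}$ on each $P_j$; since $H^1(P_j\setminus\{\mathbf{0}\})$ is generated by such loops, this gives exactness on all of $V\setminus\{\mathbf{0}\}$. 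Write $u_j = \rd\beta_j$ with $\beta_j$ single-valued on $P_j\setminus\{\mathbf{0}\}$. The pointwise bound $|\mathbf{x}||u_j| \leq C$ near $\mathbf{0}$ (from Proposition~\ref{prop:graphical}'s estimate $|u_i|\leq Ad_i|\mathbf{x}|^{-1}$ divided by $d_i$) forces $\beta_j = b_j(s)\ln|\mathbf{x}| + \tilde\beta_j$ with $\tilde\beta_j$ bounded. Plugging back into \eqref{eq:dh20} and separating the logarithmic eigenmode of the drift operator on $P_j$, of eigenvalue $1$ (since $e^s \rd\ln|\mathbf{x}|$ is a solution), forces $b_j(s) = e^s a_j$ with $a_j$ constant. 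Parabolic regularity applied to $\tilde{u}_j = u_j - e^s a_j\,\rd\ln|\mathbf{x}|$, which solves \eqref{eq:dh20} and whose potential is bounded near $\mathbf{0}$, then yields smooth extension across $\mathbf{0}$.

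\textbf{Estimates and main difficulty.} The weighted $L^2$ bound at $s=1$ follows by dividing $I_V(M^i_1)^2 \leq C d_i^2$ (from Lemma~\ref{lem:IVest}(1) and Definition~\ref{dfn:IVDV}) by $d_i^2$ and passing to the limit, using that $|\mathbf{x}|d_V$ on the graph converges to $|\mathbf{x}||u|$ on $V$, and that the Lagrangian angle term converges to a codifferential contribution via the standard linearisation $\theta - \theta_V \approx \rd^* u$. The pointwise bound $|\mathbf{x}||u| + |\rd^*u| \leq C$ in $B_1\cap V\setminus\{\mathbf{0}\}$ is a direct consequence of the near-origin estimate from Proposition~\ref{prop:graphical} together with Lemma~\ref{lem:IVest}(2) for $|\theta-\theta_V|$; and $|a_j|\leq C$ is then immediate from $|\mathbf{x}||u_j|\leq C$. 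I expect the subtlest step to be the decomposition itself: one must identify exactly the singular mode $e^s a_j\,\rd\ln|\mathbf{x}|$, show that its time dependence is precisely $e^s$ rather than some slowly varying prefactor, and then upgrade the remainder $\tilde u_j$ from merely bounded to smoothly extending across $\mathbf{0}$. This rests on combining the exactness of $u$, the pointwise estimate forcing at most a logarithmic singularity in $\beta_j$, and the fact that $e^s\,\rd\ln|\mathbf{x}|$ is the unique admissible singular solution of \eqref{eq:dh20} on $P_j\setminus\{\mathbf{0}\}$ with the allowed blow-up rate.
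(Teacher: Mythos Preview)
Your treatment of parts (1) and (3), and of the exactness claim in (2), matches the paper and is fine. The gap is in the decomposition $u_j = e^s a_j\,\rd\ln|\mathbf{x}| + \tilde u_j$ with $\tilde u_j$ smooth across the origin.

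The assertion ``$|\mathbf{x}|\,|u_j|\leq C$ forces $\beta_j = b_j(s)\ln|\mathbf{x}| + \tilde\beta_j$ with $\tilde\beta_j$ bounded'' is not justified and is in fact false from the gradient bound alone: $|\nabla\beta_j|\leq C|\mathbf{x}|^{-1}$ only gives $|\beta_j|\leq C\,|\ln|\mathbf{x}||$, and a function such as $\ln|\ln|\mathbf{x}||$ satisfies the gradient bound but is not of the form $b\ln|\mathbf{x}|$ plus a bounded remainder. Once this step fails, so does the next one: ``separating the logarithmic eigenmode'' has no meaning without a prior decomposition, since $\rd\ln|\mathbf{x}|$ is not in the Gaussian $L^2$ space on $P_j$ and the image of an irregular $\tilde\beta_j$ under the drift operator could itself carry a $\rd\ln|\mathbf{x}|$ contribution. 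Likewise, the final appeal to parabolic regularity for $\tilde u_j$ presupposes exactly the boundedness you have not established.

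The paper supplies the missing input. It uses not only $|\mathbf{x}|\,|u_j|\leq C$ but crucially the pointwise bound $|\rd^*u_j|\leq C$ near $\mathbf{0}$, together with the fact that $\rd^*u_j$ (the linearisation of $\theta-\theta_V$) itself satisfies the drift heat equation. The paper first undoes the self-similar rescaling, passing to a potential $F$ on $P_j\setminus\{\mathbf{0}\}$ solving the ordinary heat equation $\partial_t F=\Delta F$. Then $\Delta F$ is a \emph{bounded} solution of the heat equation on the punctured disk, so it extends smoothly across $\mathbf{0}$ by removable singularity. Since $\partial_t F=\Delta F$ is smooth, $F(t)-F(T_0')$ is smooth across $\mathbf{0}$, i.e.\ the singularity of $F$ is time-independent. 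At the fixed time $T_0'$ one subtracts a smooth solution of $\Delta g=\Delta F(T_0')$ to obtain a harmonic function on the punctured disk with at most logarithmic growth, which must equal $a\ln|\mathbf{x}|$ plus a smooth harmonic function. Rescaling back to the drift heat equation produces the $e^s$ prefactor and the smooth $\tilde u_j$. In short, the decomposition comes from a removable-singularity argument for $\rd^*u$, not from the bound on $u$ alone.
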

  \begin{proof}
     It follows from Proposition~\ref{prop:graphical} that
     $M^i_s$ is the graph of $u_i$ over $V$ for $s\in [1,T]$ on the annuli $B_{R_{d_i}}\setminus B_{Ad_i^{1/2}}$, where $R_{d_i}\to\infty$ as $d_i\to 0$ by Definition \ref{dfn:Rd}, and hence on  larger
     and larger compact annuli $K_i$.  Moreover, on any fixed compact set $K\subset
      V\setminus \{\mathbf{0}\}$ we have uniform bounds for
     $d_i^{-1}u_i$ and $d_i^{-1}\nabla u_i$ as $i\to\infty$. 
    Standard parabolic estimates imply that, 
     up to choosing a subsequence, the $u_i$ converge locally smoothly
     to a solution of  \eqref{eq:dh20} on
     $V\setminus\{\mathbf{0}\}$. 
     
     Using that the $M^i_\tau$ are Lagrangian it
     follows that $u$ can be identified with a closed 
     1-form. Condition \eqref{cond:ast} implies that the
     integral of $u$ along the two circles $V\cap \partial B_1$
     vanishes, and so $u$ is actually exact. Writing $u=(u_1, u_2)$ as in (b), we then have $u_j =
     \rd f_j$ for functions $f_j$ on $P_j\setminus\{\mathbf{0}\}$.

     The estimates \eqref{eq:limituest10} follow directly from the
     definition of $I_V(M^i_1)$ together with the bounds given
     in Lemma~\ref{lem:IVest}. For the bound on $\rd^*u$ note that when
     we locally view $M^i_1$ as the graph of the 1-form $u_i$
     over a plane, then the difference $\theta - \theta_V$ in the
     Lagrangian angle is given by $\rd^*u_i$ up to lower order terms in
     $u_i$.  

     It remains to show the claimed decomposition \eqref{eq:u.decomp}. For this we focus on one of the planes $P = P_j$,
     and the corresponding solution $u=\rd f$ of the drift heat
     equation. By rescaling 
     $$U(x,t) = \sqrt{-t} u( x/ \sqrt{-t},
     -\ln(-t))$$
     we obtain a solution $U$ of the heat equation on a
     time interval $[T_0', T_1']$  on $P\setminus\{\mathbf{0}\}$. We have $U=\rd F$
     for $F : P\setminus\{\mathbf{0}\} \to \mathbb{R}$  and we can arrange that
     $F$ also satisfies the heat equation. The bound $|\rd^*u|\leq C$ on
   $B_1\setminus\{\mathbf{0}\}$ implies that we also have a uniform bound $|\Delta
     F|\leq C$ on $[T_0', T_1'] \times B_r(\bOh) \setminus\{\mathbf{0}\}$ for some
     $r > 0$. Since $\Delta F$ also satisfies the heat equation, it
     follows that $\Delta F$ extends smoothly across the origin in
     $P$. Using that $\Delta F = \partial_t F$ on $P\setminus \{\mathbf{0}\}$,
     for any $t\in [T_0', T_1']$ we have
     \begin{equation}\label{eq:Fveq}
       F(t) - F(T_0') = v(t) \text{ on } P\setminus\{\mathbf{0}\},
       \end{equation}
     where $v(t) = \int_{T_0'}^t \partial_s F\, ds$ is smooth across the
     origin. Since $\Delta F(T_0')$ is smooth across
     the origin, there is a smooth function $g$ such that $F(T_0') -
     g$ is harmonic on $P\setminus \{\mathbf{0}\}$. Note that the bound $|\rd F| \leq
     C|\mathbf{x}|^{-1}$ near the origin implies that 
     $$|F(T_0') - g| \leq C|\ln|\mathbf{x}||$$ 
     near the origin. This implies that $F(T_0') - g = a \ln |x|$
     for a constant $a$ satisfying $|a|\leq C$,  up to modifying $g$ by a smooth 
     function. Using this in \eqref{eq:Fveq} we have
     that 
     $$F(t) = a\ln |\mathbf{x}| + \tilde{F}(t),$$ 
     where $\tilde{F}(t)$ extends
     smoothly across the origin, and therefore solves the heat
     equation on all of $P$.  Scaling $F$ and hence $U$ back to give $u=\rd f$, this shows the required decomposition \eqref{eq:u.decomp}. 
   \end{proof}

\subsection{Estimates for solutions of the drift heat equation}   We will use the following estimates for the  smooth part of the limiting solution 
   of the drift heat equation obtained in
   Proposition~\ref{prop:limitu10}.
   \begin{prop}\label{prop:dhest11}
     Suppose that $u$ is an exact $1$-form valued solution of the drift heat
     equation \eqref{eq:dh20} on $\mathbb{R}^2$ on the time interval
     $[0,\infty)$. 
     \begin{enumerate}
       \item Suppose that at $s=0$ we have the bounds
      \begin{align*} \int_{\mathbb{R}^2} |\mathbf{x}|^2 |u|^2 e^{-|\mathbf{x}|^2/4}
         &\leq 1, \\
         |\rd^*u| &\leq 1, \text{ on } B_1. \end{align*} 
     Then there is a uniform constant $C>0$ so that at $s=0$ we also have
    $$ \int_{\mathbb{R}^2} |u|^2 e^{-|\mathbf{x}|^2/4} \leq C. $$
   \item If at $s=0$ we have $\int_{\mathbb{R}^2} |u|^2 e^{-|\mathbf{x}|^2/4}
     \leq 1$, then at time $s=1$  we have
     $$ |u|^2, |\nabla u|^2, |\nabla^2 u|^2 \leq C e^{|\mathbf{x}|^2 / 4p} $$
     for some constants $C>0$ and  $p > 1$.
     \end{enumerate}
   \end{prop}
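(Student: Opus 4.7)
The two parts are essentially independent: (1) is an elliptic estimate at a fixed time exploiting exactness to control $u$ near the origin, where the weight $|\mathbf{x}|^2$ in the hypothesis degenerates, and (2) is a standard parabolic smoothing estimate for the drift heat equation~\eqref{eq:dh20}.

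For (1), the contribution from $\{|\mathbf{x}| \geq 1/2\}$ is immediate from the hypothesis since $|\mathbf{x}|^2 \geq 1/4$ there. It remains to bound $\int_{B_{1/2}} |u|^2$. Writing $u = df$ using exactness, the hypothesis $|d^*u| \leq 1$ on $B_1$ becomes $|\Delta f| \leq 1$ on $B_1$. A standard Caccioppoli inequality, obtained by testing $\Delta f = -d^*u$ against $(f-c)\phi^2$ for a cutoff $\phi$ with $\phi \equiv 1$ on $B_{1/2}$ and $\mathrm{supp}\,\phi \subset B_1$, gives
\[
\int_{B_{1/2}} |df|^2 \leq C \int_{B_1 \setminus B_{1/2}} (f-c)^2 + C
\]
for any constant $c$. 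Choosing $c$ to be the mean of $f$ on the annulus $B_1 \setminus B_{1/2}$ and applying the Poincar\'e inequality there, the first term on the right is controlled by $C \int_{B_1 \setminus B_{1/2}} |df|^2$, which is in turn bounded by the weighted hypothesis since $|\mathbf{x}|^2 \geq 1/4$ on the annulus.

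For (2), I would use the rescaling from the proof of Proposition~\ref{prop:limitu10}: if $U$ solves the standard heat equation on $\mathbb{R}^2 \times [-1, -e^{-1}]$, then $u(\mathbf{y}, s) := e^{s/2} U(e^{-s/2}\mathbf{y}, -e^{-s})$ solves~\eqref{eq:dh20}. The value $u(1,\mathbf{x})$ is thus expressed as a Gaussian convolution of $u(0,\cdot)$ at a fixed positive scale $1 - e^{-1}$. Cauchy--Schwarz and a completing-the-square computation against the weight $e^{-|\mathbf{y}|^2/4}$ then yield
\[
|u(1,\mathbf{x})|^2 \leq C e^{|\mathbf{x}|^2/4p} \int_{\mathbb{R}^2} |u(0,\mathbf{y})|^2 e^{-|\mathbf{y}|^2/4}\, d\mathbf{y}
\]
with $p = (e+1)/2 > 1$ emerging from the Gaussian integral. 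The derivative bounds follow by first applying this argument at $s=1/2$ to obtain a pointwise bound on $u$, and then invoking interior parabolic Schauder estimates on unit-scale parabolic balls around each point $(\mathbf{x},1)$ to obtain the same Gaussian bound for $\nabla u$ and $\nabla^2 u$.

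The main technical subtlety is the strict inequality $p>1$ in (2), which is what makes the pointwise bound actually useful when integrated against $e^{-|\mathbf{x}|^2/4}$ elsewhere in the paper. This strict inequality is not automatic: it comes from the fact that the spread $1 - e^{-1}$ of the heat kernel at the fixed time $s=1$ is large enough to absorb the growth factor $e^{|\mathbf{y}|^2/4}$ with a strictly positive margin in the completing-the-square computation, yielding the explicit value $p = (e+1)/2$.
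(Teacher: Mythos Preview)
Your approach is largely correct and, especially in part (2), genuinely different from the paper's.

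\textbf{Part (1).} The paper also reduces to bounding $u$ near the origin, but instead of a Caccioppoli inequality it uses the maximum principle: writing $u=\rd f$ with $f(p)=0$ at a basepoint on $\partial B_1$, elliptic estimates on the annulus bound $|u|$ and hence $|f|$ on $\partial B_1$; then $|\Delta f|\leq 1$ on $B_1$ and the maximum principle bound $|f|$ on $B_1$, whence $|\rd f|\leq C$ on $B_{3/4}$ by interior gradient estimates. Your Caccioppoli route is fine in spirit, but note that testing $\Delta f$ against $(f-c)\phi^2$ produces a source contribution $\int_{B_1}|f-c|\phi^2$ supported on all of $B_1$, not just the annulus where $\nabla\phi\neq 0$, so the inequality you write down is not what falls out directly. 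This extra term can be absorbed via Young's inequality and the Poincar\'e inequality on $B_1$, but the cleanest fix is to first subtract a particular solution $f_0$ of $\Delta f_0=-\rd^*u$ with zero boundary data on $B_1$ (which has $|f_0|,|\rd f_0|\leq C$ on $B_{3/4}$ by the maximum principle and interior estimates), and then run your Caccioppoli argument for the harmonic remainder $f-f_0$, where there is no source term and your inequality holds verbatim.

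\textbf{Part (2).} The paper does not use the heat-kernel representation. Instead it proceeds as in the proof of Lemma~\ref{lem:IVest}: Ecker's log-Sobolev inequality upgrades the Gaussian $L^2$ bound on $u$ to a Gaussian $L^{2p}$ bound at an intermediate time, and then the monotonicity formula for $|u|^{2p}$ with backwards heat kernels centred at arbitrary points yields the pointwise estimate, with $p>1$ coming from the log-Sobolev step. For the derivatives the paper uses the Bernstein-type subsolution $|u|^2+s|\nabla u|^2$ and repeats. Your direct Cauchy--Schwarz against the Mehler kernel is more elementary and gives the explicit value $p=(e+1)/2$; this is a cleaner argument in the flat setting here. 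One small point to watch is your Schauder step: the drift term $\tfrac12\bx\cdot\nabla u$ has unbounded coefficient, so unit-scale parabolic balls give constants growing with $|\bx|$. This only costs a polynomial factor (rescale to balls of radius $\sim(1+|\bx|)^{-1}$ where the drift is $O(1)$), which is absorbed by shrinking $p$ slightly; alternatively, differentiate the kernel directly in the $U$-variable. The paper's approach has the advantage of recycling machinery already set up for the nonlinear flow, where no explicit kernel is available.
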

   \begin{proof}
     To prove (1), it is enough to show that under the assumptions we
     have a uniform bound $|u| \leq C$ on $B_1$ at $s=1$. We can write
     $u=\rd f$, with $|\Delta f| \leq 1$ and $f(p)=0$ for a basepoint
     $p\in \partial B_1$. Elliptic estimates for the system $\rd u=0,
     |\rd^*u|\leq 1$ together with the integral bound for $u$ imply that
     we have a uniform bound $|u|\leq C$  on the annulus $B_2\setminus
     B_{1/2}$, and thus $f$ satisfies a gradient bound there. It follows
     that we have $|f|\leq C$ on $\partial B_1$. Since $|\Delta f|\leq
     1$ on $B_1$, the maximum principle then implies a uniform bound $|f| \leq
     C$ on $B_1$ and so we also have a uniform gradient bound $|\rd f|
     \leq C$ on $B_{3/4}$. The required estimate for $|u|$ follows. 

     To prove (2), we first argue as in the proof of
     Lemma~\ref{lem:IVest} to obtain the pointwise estimate $|u|^2
     \leq C e^{|\mathbf{x}|^2/4p}$ for $s \in [1/4, 1]$ for some $C>0$ and $p > 1$. In order to
     estimate $\nabla u$, we can consider the evolution of $f = |u|^2 + s
     |\nabla u|^2$. In terms of the drift Laplacian \[
    \label{eq:L0}\mathcal{L}_0 =
     \Delta - \frac{1}{2} \mathbf{x}\cdot \nabla\] we have (recalling that $s\geq 0$)
$$ (\partial_s - \mathcal{L}_0) (|u|^2 + s |\nabla u|^2) = |u|^2 -
  2|\nabla u|^2  - 2s |\nabla^2 u|^2 + |\nabla u|^2 \leq |u|^2. $$
  It follows, using an estimate analogous to \eqref{eq:lS10} (see also
  \cite[Theorem 1.6.2]{Bogachev}), that at time $s=1/2$ we have
  a bound
$$ \int_{\mathbb{R}^2} |\nabla u|^{2p} e^{-|\mathbf{x}|^2/4} \leq C, $$
   for some $p > 1$.  Arguing again as in the proof of
   Lemma~\ref{lem:IVest} we obtain the required pointwise bounds for
   $|\nabla u|$ at $s=1$.  The bound for $|\nabla^2 u|$ is similar. 
   \end{proof}

\section{Three-annulus lemmas}\label{sec:threeann}

In this section we prove two versions of the 3-annulus lemma.  The
first is for solutions of the drift heat equation.  The second is for
our distance $D_V$ to the planes $V$.

\subsection{Drift heat equation}
We show the following 3-annulus lemma for solutions of the drift heat
equation given by Proposition~\ref{prop:limitu10}. Note that this is
slightly stronger than log-convexity of the norm proved by
Colding--Minicozzi~\cite{ColdingMinicozzi:frequency}.  The proof is
similar to Simon~\cite[Lemma 3.3]{Simon.lectures}. 
\begin{lemma} \label{lem:3ann1}
  There are small $0 < \lambda_1 <
  \lambda_2<1$ satisfying the following.
  Let $u = e^s a_0 \rd\ln |\mathbf{x}| + \tilde{u}$ be a solution of the drift heat
  equation \eqref{eq:dh20} on $\mathbb{R}^2\setminus\{\mathbf{0}\}$, where $\tilde{u}$ extends
  smoothly across the origin.  We define the norm
  \[\label{eq:u.norm} \Vert u(\tau)\Vert^2 = |a_0|^2e^{2\tau} + \int_{\mathbb{R}^2}|\tilde{u}(\tau)|^2
    e^{-|\mathbf{x}|^2/4} \]
  and observe that   we have a decomposition 
  \begin{equation}\label{eq:udec20}
    u(s) = a_0 e^{s} \rd\ln |\mathbf{x}| + \sum_{i>0} a_i e^{\mu_i s} \phi_i,
  \end{equation}
  where the $\phi_i$ are orthonormal eigenfunctions of the drift  Laplacian $\mathcal{L}_0$ in \eqref{eq:L0}.
  \begin{enumerate}
    \item If $\Vert u(1)\Vert \geq
  e^{\lambda_1} \Vert u(0)\Vert$ then 
  $\Vert u(2)\Vert \geq e^{\lambda_2} \Vert u(1)\Vert$.
    \item If $u\neq 0$ has no homogeneous degree zero component, i.e.~no
      term corresponding to $\mu_i=0$ in \eqref{eq:udec20}, then we
      must have 
      $$\text{either $\Vert u(2)\Vert \geq e^{\lambda_1} \Vert
      u(1)\Vert$}\quad \text{or $\Vert u(1)\Vert \leq e^{-\lambda_1} \Vert
      u(0)\Vert$.}$$
\end{enumerate}
\end{lemma}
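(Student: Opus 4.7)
The plan is first to express $h(\tau) := \|u(\tau)\|^2$ in closed form using the orthonormality of the $\phi_i$ in the Gaussian $L^2$-space (built into \eqref{eq:u.norm}):
\[ h(\tau) = \sum_{i\geq 0} b_i e^{2\mu_i\tau}, \qquad b_i := |a_i|^2 \geq 0, \qquad \mu_0 = 1. \]
A Hermite-type computation will identify the smooth eigenvalues: the commutation $\mathcal{L}_0 \rd\psi = \rd \mathcal{L}_0\psi + \tfrac{1}{2}\rd\psi$ reduces the eigenvalue equation on an exact $1$-form $\rd\psi$ to $\mathcal{L}_0\psi = (\mu - 1)\psi$, and since the Hermite eigenvalues of $\mathcal{L}_0$ on scalars over $\RR^2$ are $-n/2$ for $n \in \{0,1,2,\ldots\}$, combined with the singular eigenvalue $\mu_0 = 1$ this places every $\mu_i$ in the discrete set $\Sigma := \{1, 1/2, 0, -1/2, -1, \ldots\}$. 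It follows that $\log h$ is convex, and Cauchy--Schwarz gives $h(2) h(0) \geq h(1)^2$; this monotonicity is the engine of both parts.

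For part (1), I would choose $\lambda_1 \in (0, 1/2)$, so that $\lambda_1 \notin \Sigma$. Cauchy--Schwarz already gives $h(2)/h(1) \geq h(1)/h(0) \geq e^{2\lambda_1}$, which is the claim with $\lambda_2 = \lambda_1$. To upgrade to $\lambda_2 > \lambda_1$, the plan is to argue by compactness: a sequence of normalized counterexamples (with $\sum_i b_i^{(k)} = 1$) satisfying $h^{(k)}(1) \geq e^{2\lambda_1}$ and $h^{(k)}(2)/h^{(k)}(1) \to e^{2\lambda_1}$ would produce, after aggregating coefficients by eigenvalue and passing to a pointwise-convergent subsequence, a limit $h^\infty(\tau) = \sum_{\mu \in \Sigma} \beta_\mu^\infty e^{2\mu\tau}$ for which Cauchy--Schwarz is tight. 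Mass can escape as $\mu \to -\infty$, but does not affect $h^{(k)}(\tau)$ in the limit for $\tau > 0$. Equality in Cauchy--Schwarz then forces $h^\infty$ to concentrate on a single $\mu_\star \in \Sigma$; the constraint $\mu_\star \geq \lambda_1$ together with $\Sigma \cap (\lambda_1, \infty) = \{1/2, 1\}$ forces $\mu_\star \geq 1/2$, giving $h^\infty(2)/h^\infty(1) = e^{2\mu_\star} \geq e > e^{2\lambda_1}$, contradicting the limit.

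For part (2), the hypothesis restricts the active spectrum to $\Sigma \setminus \{0\}$, on which every $\mu$ satisfies $|\mu| \geq 1/2$ and hence $\cosh(2\mu) \geq \cosh(1) > 1$. The plan here is a direct inequality that avoids any limiting argument: normalize $h(1) = 1$ and set $\tilde b_i := b_i e^{2\mu_i}$, so $\sum_i \tilde b_i = 1$. The contrapositive of the claim reads
\[ h(0) = \sum_i \tilde b_i e^{-2\mu_i} \leq e^{2\lambda_1} \quad\text{and}\quad h(2) = \sum_i \tilde b_i e^{2\mu_i} \leq e^{2\lambda_1}. \]
Adding these two bounds and using the spectral gap,
\[ \cosh(1) = \cosh(1)\sum_i \tilde b_i \leq \sum_i \tilde b_i \cosh(2\mu_i) \leq e^{2\lambda_1}, \]
which is a contradiction as soon as $\lambda_1 < \tfrac{1}{2}\log\cosh(1)$.

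The main technical obstacle will be the compactness step in part (1): one must carefully account for coefficient mass escaping to $\mu \to -\infty$ along the subsequence, and verify that the Cauchy--Schwarz inequality and its equality case transfer cleanly to the pointwise limit even when the total limit mass $\sum_\mu \beta_\mu^\infty$ is strictly less than $1$.
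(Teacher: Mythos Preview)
Your proposal is correct. For part (2) your argument is essentially the paper's: the paper also exploits the $\cosh$ inequality coming from the spectral gap around $0$, just without computing the eigenvalues explicitly (it only needs that $0$ is isolated in the spectrum). For part (1), however, the paper avoids your compactness argument entirely by a direct algebraic trick: fixing a small $a>0$ in the spectral gap, one has
$$\tfrac{1}{2}\big(\|u(0)\|^2 + e^{-4a}\|u(2)\|^2\big) = \sum_i a_i^2\, e^{2\mu_i - 2a}\cosh(2\mu_i - 2a) \geq (1+c)\,e^{-2a}\|u(1)\|^2$$
for some $c>0$, since $|2\mu_i - 2a|\geq a$ for every $i$. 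Choosing $\lambda_1 < a < \lambda_2$ both close to $a$ and rearranging then yields the strict improvement $\lambda_2 > \lambda_1$ in one line. Your compactness route does go through (and the mass-escape issue you flag resolves itself: Cauchy--Schwarz applied to $h^\infty$ at times $0,1,2$ forces $h^\infty(0)\geq 1$, so in fact no mass is lost and the equality case applies cleanly), but the paper's asymmetric shift by $a$ is both shorter and more robust---it works for any discrete spectrum with $0$ isolated, without needing to identify the eigenvalues or pass to a limit.
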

\begin{proof}
By \eqref{eq:udec20} and the definition of the norm, if we set $\mu_0=1$ then we have
  $$ \Vert u(s)  \Vert^2 = \sum_{i=0}^\infty  a_i^2 e^{2\mu_i t}.$$
  Fix a small $a > 0$ so that if $\mu_i\in [-10a,10a]$ for some $i$,
  then $\mu_i=0$.  We have
  \begin{align*}
   \Vert u(0)  \Vert^2 &= \sum a_i^2, \\
     e^{-2a}\Vert u(1)  \Vert^2  &= \sum a_i^2 e^{2\mu_i - 2a}, \\
     e^{-4a} \Vert u(2)  \Vert^2  &= \sum a_i^2 e^{4\mu_i - 4a}. 
     \end{align*}
   It follows that
   \begin{align*} \frac{1}{2} (\Vert u(0)  \Vert^2 + e^{-4a}\Vert u(2)  \Vert^2 ) &= \sum a_i^2
     e^{2\mu_i -2a} \frac{1}{2} (e^{2a-2\mu_i} +
     e^{2\mu_i-2a}) \\
     &= \sum a_i^2 e^{2\mu_i - 2a} \cosh( 2\mu_i-2a). 
     \end{align*}
   By our choice of $a$ we have $|2\mu_i - 2a| \geq a$ for all $i$, so we get
   \begin{equation} \label{eq:u30}
     \frac{1}{2} (\Vert u(0)  \Vert^2  + e^{-4a} \Vert u(2)  \Vert^2 ) \geq (1 + c)
     e^{-2a}\Vert u(1)  \Vert^2 ,
   \end{equation}
   for some $c > 0$. We choose $\lambda_1 < a < \lambda_2$ with
   $\lambda_j$ very close 
   to $a$. If $\Vert u(1)  \Vert^2  \geq e^{2\lambda_1}\Vert u(0)  \Vert^2 $
   then from \eqref{eq:u30}  we have
   $$ (1 + c) e^{-2a} \Vert u(1)  \Vert^2  \leq \frac{1}{2}e^{-4a}
     \Vert u(2)  \Vert^2  + \frac{1}{2} e^{-2\lambda_1} \Vert u(1)  \Vert^2 . $$
   Rearranging this we have
   $$ \Vert u(2)\Vert^2 \geq e^{2a}\Big( 2(1+c ) -
     e^{2a-2\lambda_1}\Big) \Vert u(1)\Vert^2 \geq e^{2\lambda_2}
     \Vert u(1)\Vert^2, $$
   if the $\lambda_j$ are sufficiently close to $a$. This shows 
   (1). The proof of (2), choosing $\lambda_1,\lambda_2$ closer to
   zero if necessary, is similar. 
\end{proof}

\subsection{Distance}
Using Lemma~\ref{lem:3ann1} we can show a 3-annulus lemma for the distance
$D_V$, by a contradiction argument. 
\begin{prop}\label{prop:3ann2} Let $\lambda_1,\lambda_2$ be as in
  Lemma~\ref{lem:3ann1}.  Let $0<\lambda_1' < \lambda_2'$ be such that $\lambda_j' \in
  (\lambda_1, \lambda_2)$. There is a large $N_0 > 0$ satisfying the
  following. Given an integer $N > N_0$, suppose that the flow
  satisfies Condition \eqref{cond:ddag} for $\tau\in [-1, 2N+10]$, and $M_0$
  satisfies Condition \eqref{cond:ast}.  There is an $\epsilon > 0$ depending on
  $N$ such that if $D_V(M_0) < \epsilon$, then 
  $$ D_V(M_{N}) \geq e^{\lambda_1' N} D_V(M_0) \text{ implies
    } D_V(M_{2N}) \geq e^{\lambda_2' N} D_V(M_{N}). $$
\end{prop}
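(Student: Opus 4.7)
The plan is to argue by contradiction and reduce to the three-annulus lemma for the drift heat equation, Lemma~\ref{lem:3ann1}. Fix a large integer $N$ (to be chosen) and suppose the conclusion fails: for every $\epsilon>0$ there is a rescaled flow satisfying the hypotheses with $D_V(M_0)<\epsilon$, $D_V(M_N)\ge e^{\lambda_1' N}D_V(M_0)$, and $D_V(M_{2N})<e^{\lambda_2' N}D_V(M_N)$. Letting $\epsilon \to 0$ produces a sequence $M^i_\tau$ with $d_i:=D_V(M^i_0)\to 0$. Proposition~\ref{prop:Dgrowth1} and Lemma~\ref{lem:condast} ensure that $D_V(M^i_\tau)$ stays small and Condition~\eqref{cond:ast} persists on $[-1,2N+10]$, so Proposition~\ref{prop:limitu10} applies with $T=2N+1$: after passing to a subsequence, $d_i^{-1}u_i$ converges smoothly on compact subsets of $[1,2N+1]\times V\setminus\{\bOh\}$ to a solution $u=e^s(a_1\rd\ln|\mathbf{x}|,a_2\rd\ln|\mathbf{x}|)+\tilde{u}$ of the drift heat equation~\eqref{eq:dh20}.

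The key step is to match $D_V$ with a spectrally-compatible Gaussian norm of $u$. I would define
\[
\|u(\tau)\|_V^2 := \sum_{j=1,2}\int_{P_j}\bigl(|\mathbf{x}|^2|u_j|^2+|\rd^*u_j|^2\bigr)e^{-|\mathbf{x}|^2/4}.
\]
Smooth convergence of $d_i^{-1}u_i$ on compact sets $K\subset V\setminus\{\bOh\}$ handles the local contribution to $D_V$, while the non-concentration estimate Lemma~\ref{lem:IVest}(3), combined with the pointwise bounds from Proposition~\ref{prop:graphical}, controls the contribution from a neighbourhood of $\bOh$ and from infinity; hence $d_i^{-2}D_V(M^i_\tau)^2\to\|u(\tau)\|_V^2$. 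Using the eigenfunction decomposition $u=\sum_i a_i e^{\mu_i s}\phi_i$ from Lemma~\ref{lem:3ann1} (with $\mu_0=1$ for the singular mode), a direct computation gives $\|u(\tau)\|_V^2=\sum_i c_i a_i^2 e^{2\mu_i\tau}$ with strictly positive weights $c_i$: the singular mode contributes because $|\mathbf{x}|^2|\rd\ln|\mathbf{x}||^2\equiv 1$ integrates to a positive number, while $\rd^*\rd\ln|\mathbf{x}|=0$ away from $\bOh$. Since the proof of Lemma~\ref{lem:3ann1}(1) relies only on the spectral form and the eigenvalue gap, it applies verbatim to $\|\cdot\|_V$ with the same constants $\lambda_1,\lambda_2$.

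The first hypothesis together with Proposition~\ref{prop:Dgrowth1} gives $\|u(N)\|_V/\|u(1)\|_V\ge C^{-1}e^{\lambda_1' N}$, which exceeds $e^{\lambda_1 N}$ for $N$ large since $\lambda_1'>\lambda_1$. Writing this ratio as a product of consecutive ratios and applying the pigeonhole principle, there is $k_0\in\{1,\dots,N-1\}$ with $\|u(k_0+1)\|_V/\|u(k_0)\|_V\ge e^{\lambda_1}$. Applying Lemma~\ref{lem:3ann1}(1) successively (the conclusion $\ge e^{\lambda_2}>e^{\lambda_1}$ re-feeds the hypothesis), we obtain $\|u(j+1)\|_V/\|u(j)\|_V\ge e^{\lambda_2}$ for every $j\ge k_0+1$, so $\|u(2N)\|_V/\|u(N)\|_V\ge e^{\lambda_2 N}$. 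This contradicts the bound $\|u(2N)\|_V/\|u(N)\|_V\le e^{\lambda_2' N}$ derived from the second hypothesis and the norm correspondence, since $\lambda_2>\lambda_2'$ and multiplicative constants are absorbed once $N$ is sufficiently large.

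The main obstacle is the norm correspondence in the middle step. The Gaussian norm used in Lemma~\ref{lem:3ann1} involves $|\tilde u|^2$ with the standard Gaussian weight, while $D_V$ uses the weighted integrand $|\mathbf{x}|^2 d_V^2+|\theta-\theta_V|^2$; verifying that the singular mode $\rd\ln|\mathbf{x}|$ enters $\|\cdot\|_V$ with strictly positive weight (through the $|\mathbf{x}|^2|u|^2$ term), that all smooth eigenmodes do likewise, and that the non-concentration error from Lemma~\ref{lem:IVest}(3) does not pollute the comparison of growth ratios, is the delicate point. A secondary point is that $N_0$ must be chosen large enough to absorb the multiplicative constants from Proposition~\ref{prop:Dgrowth1} and the norm comparison within the strictly positive gaps $\lambda_1'-\lambda_1$ and $\lambda_2-\lambda_2'$.
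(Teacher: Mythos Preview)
Your contradiction setup and use of Proposition~\ref{prop:limitu10} are correct, but there is a genuine gap at the step you yourself flag as ``the delicate point''. The claim that $\|u(\tau)\|_V^2=\sum_i c_i a_i^2 e^{2\mu_i\tau}$ with positive diagonal weights is false: the eigenforms $\phi_i$ are orthonormal for the weight $e^{-|\mathbf{x}|^2/4}$, but the extra factor $|\mathbf{x}|^2$ mixes them. Already in one variable, with $\phi\propto\rd x$ and $\psi\propto\rd(x^3-6x)=(3x^2-6)\,\rd x$ (both exact eigen-$1$-forms for $\mathcal{L}_0$), one has $\int x^2\cdot 1\cdot(3x^2-6)\,e^{-x^2/4}\,dx\neq 0$. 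Hence $\|u(\tau)\|_V^2=\sum_{i,j}a_ia_j c_{ij}e^{(\mu_i+\mu_j)\tau}$ with off-diagonal entries $c_{ij}$ of indefinite sign, and the $\cosh$-inequality underlying Lemma~\ref{lem:3ann1} no longer yields the three-annulus conclusion: positive semi-definiteness of $\langle\cdot,\cdot\rangle_V$ is not enough once each entry is multiplied by a mode-dependent weight. Nor can you simply switch to the norm of Lemma~\ref{lem:3ann1}, since that norm and $\|\cdot\|_V$ are not uniformly comparable (the factor $|\mathbf{x}|^2$ contributes an extra $\sim k$ on degree-$k$ Hermite modes), so growth ratios cannot be transported between them.

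The paper's proof is organised differently. It normalises at the \emph{middle} time, $d_i:=D_V(M^i_N)$, and works only with the diagonal norm $\|\cdot\|$ of Lemma~\ref{lem:3ann1}. The two hypotheses then give upper bounds $\|\tilde u(2)\|\le Ce^{-\lambda_1'N}$ and $\|\tilde u(2N+2)\|\le Ce^{\lambda_2'N}$; a two-ended application of Lemma~\ref{lem:3ann1}(1) squeezes $\|\tilde u(N-1)\|\le\kappa$ for any prescribed $\kappa>0$ once $N$ is large. Pointwise estimates from Proposition~\ref{prop:dhest11} together with the non-concentration Lemma~\ref{lem:IVest}(3) convert this smallness into $D_V(M^i_N)\le C(\kappa+\gamma C_N)d_i<\tfrac12 d_i$ for suitable $\kappa,\gamma$, contradicting $D_V(M^i_N)=d_i$. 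The key structural difference is that this route needs only the one-sided implication (pointwise smallness of the limit $\Rightarrow$ upper bound on $D_V$), so no exact correspondence between $D_V$ and any spectral norm is ever required.
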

\begin{proof}
  The proof is by contradiction, similar to the proof of \cite[Lemma
  2, p.~549]{Simon.asympt}, using property (3) in
  Lemma~\ref{lem:IVest} to deal with the singularity of $V$ at the origin
  and its noncompactness. See \cite[Proposition 5.12]{Sz20} for a
  related argument.

Suppose that the result fails for a given large integer $N$, so that we
  have a sequence of flows $M^i_\tau$ with $D_V(M^i_0)\to 0$ such
  that the conclusion fails. By Proposition~\ref{prop:Dgrowth1} we
  have $d_i = D_V(M^i_N)\to 0$ and our
  hypothesis can be written:
  \begin{equation}\label{eq:DVa1}
    \begin{aligned} D_V(M^i_0) &\leq
    e^{-\lambda_1'N}d_i, \\
    D_V(M^i_{2N}) &<  e^{\lambda_2'N} d_i.
  \end{aligned} \end{equation}
  In particular $d_i > 0$. 
  Using  Proposition~\ref{prop:limitu10} we can write $M^i_{ s}$
  as the graphs of $u_i(s)$ for $s\in [1, 2N + 8]$ over $V$ on larger
  and larger compact sets $K_i\subset V\setminus\{\mathbf{0}\}$. The
  inequalities \eqref{eq:DVa1} and Proposition~\ref{prop:Dgrowth1}
  imply that $D_V(M^i_{s})\leq C_N d_i$ for $s\in [1,2N+8]$. 
  
  As
  in Proposition~\ref{prop:limitu10}, up to choosing a subsequence, we
  can assume that the $d_i^{-1}u_i$ converge locally smoothly to a
  limit solution $u$ of \eqref{eq:dh20} on $V\setminus\{\mathbf{0}\}$. We can
  write $u = e^sa_0\rd \ln |\mathbf{x}| + \tilde{u}$ by Proposition \ref{prop:limitu10}, where $\tilde{u}$
  is smooth across the origin and $a_0$ is constant on each plane in $V$. Using \eqref{eq:limituest10} and
  \eqref{eq:DVa1}  
   the limit satisfies the estimates
  \[\label{eq:1.2N+1.est} \int_V |\mathbf{x}|^2 |\tilde{u}(1)|^2 e^{-|\mathbf{x}|^2/4} \leq C e^{-2\lambda_1'N}, \quad
    \int_V |\mathbf{x}|^2 |\tilde{u}(2N+1)|^2 e^{-|\mathbf{x}|^2/4} \leq C e^{2\lambda_2'N}, \]
  for $C > 0$ (independent of $N$). In addition the $\rd\ln
  |\mathbf{x}|$ component of $u$ satisfies $|a_0|^2 e^{2(2N+1)}
  \leq C e^{2\lambda_2'N}$, and so
  $$ |a_0| \leq C e^{(\lambda_2'-2)N}. $$
  Recall the norm in \eqref{eq:u.norm}.
   Proposition~\ref{prop:limitu10}, Proposition~\ref{prop:dhest11} and \eqref{eq:1.2N+1.est} imply
  that
  \begin{equation}\label{eq:u210}
    \Vert \tilde{u}(2)\Vert \leq Ce^{-\lambda_1'N}, \quad \Vert
   \tilde{u}(2N+2)\Vert \leq C e^{\lambda_2'N}.
  \end{equation}
 
  Let $\kappa > 0$. We now have the following using \eqref{eq:u210} together with
  Lemma~\ref{lem:3ann1}. 
  \begin{claim}\label{claim:kappa}
 For $N$ sufficiently large (depending on $\kappa$) we have  $\Vert \tilde{u}(N-1)\Vert \leq
  \kappa$. 
 \end{claim}

\begin{proof} 
 To see this note that we have two possibilities.
  \begin{itemize}
  \item If $\Vert \tilde{u}(k+1)\Vert \leq e^{\lambda_1}\Vert \tilde{u}(k)\Vert$ for
    $k=2,\ldots, N-2$, then we have
     $$ \Vert \tilde{u}(N-1)\Vert \leq e^{(N-3)\lambda_1} \Vert \tilde{u}(2)\Vert \leq
         C e^{(N-3)\lambda_1 - \lambda_1' N}. $$
       Since $\lambda_1' > \lambda_1>0$ 
       this implies that $\Vert
       \tilde{u}(N-1)\Vert \leq \kappa$ if $N$ is chosen sufficiently large. 
  \item If $\Vert \tilde{u}(k+1)\Vert \geq e^{\lambda_1}\Vert \tilde{u}(k)\Vert$ for
    some $k\leq N-2$, then by Lemma~\ref{lem:3ann1} we have $\Vert
    \tilde{u}(k+1)\Vert \geq e^{\lambda_2}\Vert \tilde{u}(k)\Vert$ for $k=N-1, \ldots,
    2N+1$. This implies
    $$ \Vert \tilde{u}(N-1)\Vert \leq e^{-(N+3)\lambda_2} \Vert \tilde{u}(2N+2)\Vert
      \leq Ce^{-(N+3)\lambda_2 + \lambda_2'N}. $$
    Since $\lambda_2 > \lambda_2'>0$, we have $\Vert \tilde{u}(N-1)\Vert\leq
    \kappa$ if $N$ is sufficiently large.\qedhere
  \end{itemize}
  \end{proof} 
\noindent Given Claim~\ref{claim:kappa}, let  us assume therefore that $N$ is large enough that $\Vert
  \tilde{u}(N-1)\Vert \leq \kappa$. The estimates in
  Proposition~\ref{prop:dhest11} then imply that we have pointwise bounds
  \[\label{eq:est.uN} |\tilde{u}(N)|^2, |\nabla \tilde{u}(N)|^2 \leq C \kappa^2 e^{|\mathbf{x}|^2 / 4p}, \]
  for some $C>0$ and  $p > 1$. The logarithmic component of $u$
  also satisfies
  $$ \Big| e^Na_0 \rd\ln |\mathbf{x}|\Big| \leq C e^{(\lambda_2'-1)N}
    |\mathbf{x}|^{-1} \leq \kappa |\mathbf{x}|^{-1}, $$
  for large $N$ (we can assume that $\lambda_2' < 1$). 
 
  We now use the local smooth convergence of the
  $d_i^{-1}u_i$ to $u$. For any fixed compact set $K\subset
  \mathbb{C}^2\setminus\{\mathbf{0}\}$ this implies that, as $i\to\infty$, the functions
  $d_i^{-1}d_V$ and $d_i^{-1}(\theta - \theta_V)$ on $M^i_{N}$
  converge to $|u |$ and $\rd^*\tilde{u}$ on $V\cap K$. Using the
  estimates \eqref{eq:est.uN} and the fact that $p>1$ it follows that
  for a given $K$, if we choose $i$ sufficiently large (depending on
  $K, \kappa$), we have (note that we can assume that $d_i^{-2} d_V^2$ differs
      from $|u|^2$ by at most $\kappa$ on $K$ for large $i$)  
      \begin{equation}\label{eq:intK20}
    \begin{aligned}
      \int_{M^i_{N}\cap K} (|\mathbf{x}|^2 d_V^2 + |\theta - \theta_V|^2)
    e^{-|\mathbf{x}|^2/4} &\leq d_i^2\kappa^2\\
    &\quad  +  d_i^2 \int_{V\cap K}
    ( |\mathbf{x}|^2 |u|^2 + 
    |\rd^*\tilde{u}|^2) e^{-|\mathbf{x}|^2/4} \\
    &\leq d_i^2\kappa^2 + C d_i^2 \kappa^2 \int_V (|\mathbf{x}|^2 + 1) e^{\frac{|\mathbf{x}|^2}{4p}
      - \frac{|\mathbf{x}|^2}{4}} \\
      &\leq C d_i^2 \kappa^2. 
  \end{aligned}
\end{equation}

  We will now apply part (3) of Lemma~\ref{lem:IVest} to estimate
  $I_V(M^i_{N})$ in terms of $I_V(M^i_{N-1})$ together with
  the integral bound \eqref{eq:intK20} for suitable $K$. Note first
  that by \eqref{eq:DVa1} and part (1) of Lemma~\ref{lem:IVest} we have
    $$I_V(M^i_{N-1}) = D_V(M^i_{N-1}) \leq C_Nd_i\, ,$$ 
    for an $N$-dependent constant, while $I_V(M^i_{N}) =
  D_V(M^i_{N})=d_i$.
  Let $\gamma > 0$. Assuming $d_i >0$ is sufficiently small (depending on $\gamma$), from
  Lemma~\ref{lem:IVest} we have a compact set $K_\gamma\subset\mathbb{C}^2\setminus\{\mathbf{0}\}$ such that the
  integral estimate \eqref{eq:intK20} on $K = K_\gamma$ implies
  \begin{equation}\label{eq:diIV}
    d_i = I_V(M^i_{N}) \leq C( d_i\kappa + \gamma C_Nd_i).
  \end{equation}
  We first choose $\kappa$ such that $C\kappa <
  1/4$. The choice of $\kappa$ determines an $N_0$, such that for $N >
  N_0$ we have the estimate $\Vert \tilde{u}(N-1)\Vert \leq \kappa$ from Claim~\ref{claim:kappa}. Choosing $N > N_0$ then determines the constant $C_N$, and we
  choose $\gamma$ such that $CC_N\gamma < 1/4$. This choice determines
  the set $K_\gamma$, and then for sufficiently large $i$ we have the
  estimate \eqref{eq:intK20} on $K=K_\gamma$. For such large $i$ the
  inequality \eqref{eq:diIV} holds, and it implies $d_i \leq d_i/2$,
  which is a contradiction as $d_i>0$.
\end{proof}

\section{Decay estimates}\label{sec:decay}

We first define a variant of the excess from Definition~\ref{dfn:excess} and show that it satisfies a monotonicity formula. 

\begin{definition} Recall the excess $\mathcal{A}(M)$ from Definition \ref{dfn:excess}. For any $\alpha > 0$ we let 
$$ \mathcal{A}_\alpha(M) = |\mathcal{A}(M)|^{\alpha-1}
  \mathcal{A}(M), $$
i.e.~$|\mathcal{A}_\alpha| = |\mathcal{A}|^\alpha$, but
$\mathcal{A}_\alpha$ has the same sign as $\mathcal{A}$. 
\end{definition}

\begin{lemma}\label{lem:Aepschange}
  For any $\tau_1 < \tau_2$ and for $\alpha \in (0,1)$ we have
  $$ \mathcal{A}_\alpha(M_{\tau_1}) -
    \mathcal{A}_\alpha(M_{\tau_2}) \geq
    \alpha\int_{\tau_1}^{\tau_2}| \mathcal{A}(M_s)|^{\alpha-1}
    \int_{M_s} \left( 2|\mathbf{H}|^2 + \left| \mathbf{H} +
        \frac{\mathbf{x}^\perp}{2}\right|^2\right)
    e^{-|\mathbf{x}|^2/4}\, ds. $$
\end{lemma}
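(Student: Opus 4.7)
The plan is to derive this directly from the fact, already used to obtain~\eqref{eq:Achange}, that $\mathcal{A}(M_s)$ is absolutely continuous and non-increasing in $s$ with
$$ -\tfrac{d}{ds}\mathcal{A}(M_s) \geq \int_{M_s}\left(2|\mathbf{H}|^2 + \left|\mathbf{H} + \tfrac{\mathbf{x}^\perp}{2}\right|^2\right) e^{-|\mathbf{x}|^2/4} $$
for almost every $s$. Formally, since $\mathcal{A}_\alpha = f\circ\mathcal{A}$ with $f(y)=|y|^{\alpha-1}y$ monotone increasing and $f'(y)=\alpha|y|^{\alpha-1}$ for $y\neq 0$, multiplying the displayed inequality by $\alpha|\mathcal{A}(M_s)|^{\alpha-1}$ and integrating should give the claim. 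The only subtlety is that $f$ fails to be differentiable at the origin, which requires care if $\mathcal{A}(M_s)$ vanishes for some $s\in[\tau_1,\tau_2]$.

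To resolve this I would employ a smoothing argument. For $\epsilon>0$, define
$$ f_\epsilon(y) = (\epsilon^2+y^2)^{(\alpha-1)/2}y, $$
so that $f_\epsilon\to f$ uniformly on bounded sets as $\epsilon\to 0^+$. A direct calculation yields
$$ f_\epsilon'(y) = (\epsilon^2+y^2)^{(\alpha-3)/2}\bigl(\epsilon^2+\alpha y^2\bigr), $$
which is nonnegative since $\alpha\in(0,1)$. Applying the chain rule to the absolutely continuous composition $s\mapsto f_\epsilon(\mathcal{A}(M_s))$, and using the differential inequality together with $f_\epsilon'\geq 0$, gives
$$ f_\epsilon(\mathcal{A}(M_{\tau_1})) - f_\epsilon(\mathcal{A}(M_{\tau_2})) \geq \int_{\tau_1}^{\tau_2} f_\epsilon'(\mathcal{A}(M_s)) \int_{M_s}\!\left(2|\mathbf{H}|^2 + \left|\mathbf{H} + \tfrac{\mathbf{x}^\perp}{2}\right|^2\right)\! e^{-|\mathbf{x}|^2/4}\,ds. $$

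Sending $\epsilon\to 0^+$, the left-hand side converges to $\mathcal{A}_\alpha(M_{\tau_1})-\mathcal{A}_\alpha(M_{\tau_2})$ by continuity of $f$. For the right-hand side, one checks that $f_\epsilon'(y)\to\alpha|y|^{\alpha-1}$ for every $y\neq 0$ and $f_\epsilon'(0)=\epsilon^{\alpha-1}\to\infty$, so in either case $\liminf_{\epsilon\to 0^+} f_\epsilon'(y)\geq \alpha|y|^{\alpha-1}$. Since the integrand in $s$ is nonnegative, Fatou's lemma yields the desired lower bound. The only obstacle in this argument is the non-smoothness of $f$ at the origin, which is cleanly absorbed by the regularization; everything else is a direct consequence of the monotonicity of $\mathcal{A}$ already extracted from Huisken's formula.
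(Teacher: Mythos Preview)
Your argument is correct and follows essentially the same approach as the paper: both exploit the chain rule for $\mathcal{A}_\alpha = f\circ\mathcal{A}$ together with the differential inequality for $\mathcal{A}$ coming from Huisken's monotonicity, and then integrate. The only difference is that the paper works directly with $\mathcal{A}_\alpha$, using that a monotone function satisfies $g(\tau_1)-g(\tau_2)\geq -\int_{\tau_1}^{\tau_2} g'$, whereas your regularization $f_\epsilon$ with Fatou's lemma handles the non-differentiability of $f$ at the origin more explicitly.
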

\begin{proof}
  From Huisken's monotonicity formula we know that
  $\mathcal{A}_\alpha(M_s)$ is monotonically decreasing with $s$,
  being the infimum of a family of decreasing functions as we
  vary $\theta_0$ in the definition of $\mathcal{A}$ in \eqref{eq:AMdefn}. In particular
  $\mathcal{A}_\alpha(M_s)$ is differentiable almost everywhere, and
  at these points
  the derivative satisfies
  $$ \frac{d}{ds} \mathcal{A}_\alpha(M_s) \leq - \alpha
    |\mathcal{A}(M_s)|^{\alpha-1} \int_{M_s} \left( 2|\mathbf{H}|^2 + \left| \mathbf{H} +
        \frac{\mathbf{x}^\perp}{2}\right|^2\right) e^{-|\mathbf{x}|^2/4}. $$
  The required inequality follows by integrating with respect to
  $s\in [\tau_1,\tau_2]$. 
\end{proof}

The main technical result of this section is the
following. Recall that we defined $\mathcal{V}'\subset\mathcal{V}$ in the
same way as $\mathcal{V}$ in Definition \ref{dfn:V}, just with the constant $c_1/2$ instead of $c_1$ measuring closeness to the fixed pair of planes $V_0$. 
\begin{prop}\label{prop:decay10}
  There are $\epsilon_0,   C, N_1 > 0$ and $\alpha\in(0,1)$  such that if  $N > N_1$ is an
  integer,  $D_V(M_0) < \epsilon_0$, $M_1$ satisfies Condition \eqref{cond:ast}, and the flow
  exists for $\tau\in [-1, 3N^2+2]$ satisfying Condition \eqref{cond:ddag},
  then we have the following. If
  $V\in \mathcal{V}'$, then there is a $V' \in
  \mathcal{V}$ satisfying $d(V, V') \leq CD_V(M_0)$ together with one
  of the following conditions: 
  \begin{itemize}
\item[(i)] $D_{V'}(M_{N}) \leq \frac{1}{2} D_V(M_0)$,

  \item[(ii)] $D_{V'}(M_{N}) \leq  \mathcal{A}_\alpha(M_{N-3}) -
    \mathcal{A}_\alpha(M_{N}) $,
    \item[(iii)] $D_{V}(M_{N^2}) \geq e^{\lambda_1'N^2} D_{V}(M_0)$,
  \end{itemize}
  where $\lambda_1'$ is given by Proposition~\ref{prop:3ann2}.
\end{prop}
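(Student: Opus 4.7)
The plan is a proof by contradiction in the spirit of Proposition~\ref{prop:3ann2}. Suppose the conclusion fails along a sequence of flows $M^i_\tau$ with $d_i := D_V(M^i_0)\to 0$ such that none of (i), (ii), (iii) holds for any admissible $V'$. Since (iii) fails we have $D_V(M^i_{N^2}) < e^{\lambda_1'N^2}d_i$, and together with Proposition~\ref{prop:Dgrowth1} this lets me extract via Proposition~\ref{prop:limitu10} a normalized limit $u$ solving the drift heat equation on $V\setminus\{\mathbf{0}\}$ for $s\in[1, N^2]$, with $\|u(s)\| \leq Ce^{\lambda_1' s}$ by interpolation between the endpoints.

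Next I perform a mode analysis of the decomposition $u = e^s a_0\,\rd\ln|\mathbf{x}| + \tilde{u}$ from Proposition~\ref{prop:limitu10}. On each plane $P_j$, expand $\tilde{u}_j$ in eigen-$1$-forms $\rd f_{j,k}$ where $f_{j,k}$ is a Hermite polynomial of degree $k$: using $[\mathcal{L}_0, \partial_i] = \tfrac12\partial_i$, the $k$-th mode has growth rate $(2-k)/2$, while the log mode has rate $+1$. Since $\lambda_1'$ is necessarily less than $1/2$ (the eigenvalues of $\mathcal{L}_0$ adjacent to $0$ are $\pm 1/2$, so the constants $\lambda_1, \lambda_2$ in Lemma~\ref{lem:3ann1} lie in $(0,1/2)$), the coefficients of all modes with rate exceeding $\lambda_1'$---namely the log mode and the degree-one ``translation'' modes---are bounded by $Ce^{(\lambda_1'-\text{rate})N^2}$, which becomes negligible at time $s = N$ for $N$ large. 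Thus $u(N)$ is dominated by the stationary degree-two mode, where $\tilde{u}_j = \rd f_j$ with $f_j(x) = \tfrac12 x^T A_j x$ for symmetric $2\times 2$ matrices $A_j$.

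The traceless part of each $A_j$ is a Lagrangian-angle-preserving rotation of $P_j$, while $\mathrm{tr}\,A_j$ is the infinitesimal change in the Lagrangian angle of $P_j$. My choice of $V'\in\mathcal{V}$ applies to $V$ both traceless rotations simultaneously together with the common trace $\tfrac12(\mathrm{tr}\,A_1 + \mathrm{tr}\,A_2)$; this deformation preserves the special-Lagrangian condition so $V'\in\mathcal{V}$, with $d(V, V') = O(d_i)$. If the traces match, $\mathrm{tr}\,A_1 = \mathrm{tr}\,A_2$, then the residual $D_{V'}(M^i_N)$ comes only from the decaying higher-order modes, and we get $D_{V'}(M^i_N) \leq \tfrac12 d_i$ for $N$ large, which is alternative (i).

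The heart of the argument, and the main obstacle, is the trace mismatch case $\mathrm{tr}\,A_1 \neq \mathrm{tr}\,A_2$, where no $V'\in\mathcal{V}$ can absorb the full degree-two mode since the Lagrangian angles of the two deformed planes would disagree. The plan is to invoke a quantitative version of Neves's observation~\cite[Corollary 4.3]{Neves:survey} that a tangent flow cannot consist of two planes with distinct Lagrangian angles: persistent mismatch forces $\int_{M^i_s}|\theta - \theta_0|^2 e^{-|\mathbf{x}|^2/4} \gtrsim d_i^2|\mathrm{tr}\,A_1 - \mathrm{tr}\,A_2|^2$ for every $s\in[N-3, N]$, and Huisken's monotonicity (applied via Lemma~\ref{lem:Aepschange}) converts this into a matching lower bound on $\mathcal{A}_\alpha(M^i_{N-3}) - \mathcal{A}_\alpha(M^i_N)$. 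The delicate bookkeeping is to choose $\alpha\in(0,1)$ so that the factor $|\mathcal{A}|^{\alpha-1}$ multiplying the spacetime integral of $(|\mathbf{H}|^2+|\mathbf{H}+\mathbf{x}^\perp/2|^2)e^{-|\mathbf{x}|^2/4}$ delivers an excess-change bound of order $d_i|\mathrm{tr}\,A_1-\mathrm{tr}\,A_2|$ matching $D_{V'}(M^i_N)$; this yields alternative (ii) and completes the contradiction.
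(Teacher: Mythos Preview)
Your contradiction setup, the extraction of the limit $u$, the mode analysis, and the choice of $V'$ (absorbing the traceless rotations together with the average trace $\tfrac12(\mathrm{tr}\,A_1+\mathrm{tr}\,A_2)$) are all correct and match the paper's argument. The treatment of case (i) when the traces agree is also right.

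The gap is in the trace-mismatch case. Your proposed mechanism---``mismatch forces $\int|\theta-\theta_0|^2 \gtrsim d_i^2$, hence via Lemma~\ref{lem:Aepschange} a lower bound on $\mathcal{A}_\alpha(M_{N-3})-\mathcal{A}_\alpha(M_N)$''---does not work. Lemma~\ref{lem:Aepschange} bounds the $\mathcal{A}_\alpha$-drop from below by a spacetime integral of $|\mathbf{H}|^2+|\mathbf{H}+\tfrac{\mathbf{x}^\perp}{2}|^2$, not of $|\theta-\theta_0|^2$. And the mismatch mode $u_{01}$ is invisible to both quantities at the linear level: since $\rd^*u_{01}$ is constant on each plane we have $\nabla\theta\approx\nabla\rd^*u_{01}=0$, and since $u_{01}$ is homogeneous of degree one we have $u_{01}-\mathbf{x}\cdot\nabla u_{01}=0$, hence $\mathbf{x}^\perp\approx 0$. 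So a pure trace mismatch produces no $\mathcal{A}_\alpha$-drop whatsoever, and your ``delicate bookkeeping'' over $\alpha$ cannot close the argument.

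The paper's route is the reverse implication and is substantially deeper. One assumes (ii) fails, so $\mathcal{A}_\alpha(M_{N-3})-\mathcal{A}_\alpha(M_N) < C_Nd_i$. The crucial input is Proposition~\ref{prop:Aest}, giving $|\mathcal{A}(M_s)|\leq D_{V'}(M_s)^{1+\alpha_1}$; feeding this into Lemma~\ref{lem:Aepschange} with $\alpha$ small yields
\[
\int_{N-3}^N\!\int_{M^i_s\cap B_2}\big(|\mathbf{H}|^2+|\mathbf{x}^\perp|^2\big)\,ds \leq d_i^{2+2\alpha_2}
\]
for some $\alpha_2>0$, i.e.\ the integral is \emph{super}-quadratic in $d_i$. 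One then picks two nearby times $s^i_1<s^i_2$ where the spatial integral is $\leq d_i^{2+\alpha_2}$ and brings in the Liouville primitive $\beta$ (with $|\nabla\beta|=|\mathbf{x}^\perp|$). On the inner ball $B_{Cd_i^{1/2}}$, the connectedness in Condition~\eqref{cond:ast} together with the super-quadratic bound on $\int|\nabla\beta|^2$ forces $\mathrm{osc}\,\beta\leq\kappa d_i$; this is precisely where the two sheets are coupled. One then propagates this outward to an $L^2$ bound on $B_1$ and, using that $e^{-s}(\beta+2\theta)$ solves the drift heat equation, evolves from $s^i_1$ to $s^i_2$ to conclude $\mathrm{osc}\,\theta\leq C\kappa d_i$ on $B_2\setminus B_{1/2}$. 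This contradicts the mismatch, which forces $\mathrm{osc}\,\theta\gtrsim d_i$. The essential missing ingredients in your sketch are Proposition~\ref{prop:Aest} (to get the extra power of $d_i$), the primitive $\beta$, and---above all---the use of connectedness from Condition~\eqref{cond:ast} to transfer information between the two planes through the small ball.
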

\begin{proof}
  We prove the result by contradiction. Suppose that for some large
  integer $N$ we have a sequence of flows $M^i_\tau$ for $\tau\in [-1,3N^2+2]$, and 
  $D_{V_i}(M^i_0) = d_i \to 0$ for some $V_i\in \mathcal{V}'$. We will show that
  if none of the conditions (i)--(iii) hold, then we reach a
  contradiction if $N$ is sufficiently large. First note that up to choosing a
  subsequence we can replace the sequence $V_i$ by a single $V$. 

   Using Proposition~\ref{prop:Dgrowth1} and Lemma~\ref{lem:condast} we
  know that for sufficiently large $i$ the surfaces $M^i_\tau$ satisfy
  Condition \eqref{cond:ast} for $\tau\in [1, 3N^2+2]$. 
  Using
  Proposition~\ref{prop:limitu10} we can find compact sets $K_i$
  exhausting $\mathbb{C}^2\setminus\{\mathbf{0}\}$ so
  that $M^i_s$ is the graph of $u_i(s)$ over $V\cap K_i$ for $s\in [1,
  3N^2]$.
  In addition, up to choosing a subsequence, the rescaled
  functions $d_i^{-1}u_i$ converge locally smoothly to a solution $u$
  of the drift heat equation \eqref{eq:dh20} on $[1,3N^2]\times V\setminus\{\mathbf{0}\}$.  
  
 Let $u_0$ be the static component of $\tilde u$ in the decomposition 
 \eqref{eq:udec20}, corresponding to the
  kernel of the drift Laplacian 
  $$\mathcal{L}_0+\frac{1}{2}=\Delta + \frac{1}{2}( 1 - \mathbf{x}\cdot
  \nabla).$$
   We can thus write $u_0 = (\rd f_1, \rd f_2)$ for
  homogeneous degree 2 functions $f_1, f_2$ on $\mathbb{R}^2$ with respect to the splitting $V=P_1\cup P_2$ into a pair of planes. We 
then let
  $a_j = \rd^*\rd f_j$ for $j=1,2$ (which are constants), set $a =
  \frac{1}{8}(a_1-a_2)$ and define
  \begin{align*}
      u_{00} &= \Big( \rd (f_1+a|\mathbf{x}|^2), \rd (f_2 - a|\mathbf{x}|^2) \Big), \\
      u_{01} &=  \Big( \rd(-a|\mathbf{x}|^2), \rd(a|\mathbf{x}|^2) \Big),
    \end{align*}
    again using the splitting $V=P_1\cup P_2$.  Note that 
 \begin{align*}\rd^*u_{00}&=(a_1-4a,a_2+4a)=(\textstyle\frac{1}{2}(a_1+a_2),\textstyle\frac{1}{2}(a_1+a_2)),\\
 \rd^*u_{01}&=(4a,-4a)=(\textstyle\frac{1}{2}(a_1-a_2),\textstyle\frac{1}{2}(a_2-a_1)).
  \end{align*}
We have therefore decomposed $u$ as 
  \begin{equation}\label{eq:udec30}
    u =u_0+u^{\perp}= u_{00} + u_{01} + u^\perp. 
  \end{equation}

  The purpose of \eqref{eq:udec30} is that we can deform the cone $V$
  in the direction of $u_{00}$ while keeping it special Lagrangian,
  since $\rd^*u_{00} = \frac{1}{2}(a_1+a_2)$ on both planes. 
  The directions $u_{01}$  however correspond to deformations of $V$ into 
  non-special Lagrangian directions, whenever $a_1\neq a_2$. 
  More precisely, let $V'_i$ denote the graph of $d_i u_{00}$ over
  $V$. Using Lemma~\ref{lem:DVV'comp} we have
  $D_{V'_i}(M^i_0)\leq Cd_i$, and we can argue as above to write
  $M^i_s$ as the graph of $u'_i(s)$ over larger and larger subsets of
  $V_i'$. The rescaled functions $d_i^{-1} u_i'$ then converge to $u'
  = u_{01} + u^\perp$ in the decomposition \eqref{eq:udec30}. 

  Fix a $\kappa > 0$. We first use (2) in Lemma~\ref{lem:3ann1} to
  show the following.
  
  \begin{claim}\label{claim:uperp} If $N$ is sufficiently large (depending on $\kappa$), then either $\Vert u^\perp(N-4)\Vert \leq \kappa$  or (iii)
  holds.
  \end{claim} 

\begin{proof}  Recall that we consider $u^\perp$ for $\tau \in [1,3N^2]$ and that $u^\perp$ has no homogeneous degree zero component. Suppose that $\Vert u^\perp(N-4)\Vert > \kappa$. If $N$
  is sufficiently large, there must be some $k < N-4$ for which $\Vert
  u^\perp(k+1)\Vert > e^{-\lambda_1}\Vert u^\perp(k)\Vert$, since
  otherwise we would have $\Vert u^\perp(N-4)\Vert \leq
  Ce^{-(N-5)\lambda_1}$, which for large $N$ is less than $\kappa$.
  Lemma~\ref{lem:3ann1} (2) now implies that $\Vert
  u^\perp(k+2)\Vert \geq e^{\lambda_1}\Vert u^\perp(k+1)\Vert$. However, Lemma~\ref{lem:3ann1} (1) then implies that 
    $$
   \Vert
  u^\perp(l+1)\Vert \geq e^{\lambda_2}\Vert u^\perp(l)\Vert \quad \text{for } l \geq k+2\, .$$
It follows that $\Vert u^\perp(N-3)\Vert \geq
  e^{\lambda_1}\kappa$ and $\Vert u^\perp(l+1)\Vert \geq
  e^{\lambda_2} \Vert u^\perp(l)\Vert$ for all $k\geq N-3$. Iterating
  this, we find that
  $$ \Vert u^\perp(N^2)\Vert \geq e^{\lambda_1 +
      (N^2+3-N)\lambda_2}\kappa. $$
  We can split $u^\perp$ further, writing
  \[\label{eq:split}
   u^\perp =  e^s a_0 \rd\ln |\bx| + \tilde u^\perp\]
  such that  $\tilde u^\perp$ is a smooth solution to the drift heat equation on $V$. Ecker's log-Sobolev inequality~\cite{Ecker.logSobolev} then  implies that there is $p>2$ such that
  $$\left(\int_V(\tilde u^\perp(N^2))^p e^{-|\bx|^2/4}\right)^{1/p} \leq C \|\tilde u^\perp (N^2-1)\| \leq C \|\tilde u^\perp (N^2)\|\, .$$
Combined with the estimates from Proposition \ref{prop:dhest11}  (2) for $\tilde u^\perp$ and \eqref{eq:split}, we deduce  that there is $r_0>0$ such that
$$ \int_{V\cap (B_{1/r_0}\setminus B_{r_0})}(u^\perp(N^2))^2 e^{-|\bx|^2/4} \geq \frac{1}{2} \|u^\perp(N^2)\|^2\, .$$
The definition of $D_V$ then implies that for sufficiently large
  $i$ we have
  $$ D_V(M^i_{N^2}) \geq C^{-1} d_i e^{\lambda_1 +
      (N^2+3-N)\lambda_2}\kappa \geq e^{\lambda_1' N^2} d_i, $$
  for sufficiently large $N$, since $\lambda_2 > \lambda_1'$.  Hence (iii) holds.  \end{proof}

  Let us suppose from now on that (iii) does not hold.  Suppose that $\Vert u_{01}\Vert = \kappa_1$ for some
  small $\kappa_1 \geq 0$ (note that $u_{01}$ is $s$-independent). We
  have $\kappa_1 < C$ for a uniform constant $C$. We
  also assume that for a given small $\kappa > 0$,
  $N$ is chosen large enough  so that $\Vert
  u^\perp(N-4)\Vert \leq \kappa$ by Claim~\ref{claim:uperp}. We now show the following, from which we will deduce that (i) holds if $\kappa_1$ is sufficiently small.
\begin{claim}\label{claim:kappa1}
If $i$ is
  sufficiently large,
  \begin{equation}\label{eq:DV'bound10}
    D_{V'_i}(M^i_{N-3}) \leq C(\kappa_1+\kappa)d_i.
  \end{equation}
\end{claim}  
\begin{proof}
By Proposition~\ref{prop:Dgrowth1}
  together with Lemma~\ref{lem:DVV'comp} we
  have $D_{V'_i}(M^i_{N-4}) \leq C_Nd_i$ for $C_N$ depending on $N$. As
  in the proof of Proposition~\ref{prop:3ann2} we can write $u = e^sa_0\rd \ln |\mathbf{x}| + \tilde{u}$ such that we have pointwise
  bounds of the form
  \begin{equation}\label{eq:uL31}
    |\tilde u(s)|^2, |\nabla^k \tilde u(s)|^2 \leq C (\kappa_1 + \kappa)^2
    e^{|\mathbf{x}|^2/4p}, \text{ for } k=1,2,3 \text{ and }s\in [N-3,N],
    \end{equation} for some $C>0$ and $p>1$.
  We can then use the local
  smooth convergence of $d_i^{-1} u_i\to u$ together with property (3) of
  Lemma~\ref{lem:IVest} to ensure that \eqref{eq:DV'bound10}
  holds.
\end{proof}
  
   It follows from Claim~\ref{claim:kappa1}, using Proposition~\ref{prop:Dgrowth1}, that for a
  larger $C$ we have
  $D_{V'_i}(M^i_s) \leq C(\kappa_1 + \kappa)d_i$ for $s\in [N-3, N]$.
  If now $C\kappa_1 < 1/4$, then by choosing $\kappa = C^{-1}/4$ we
  will have $D_{V'_i}(M^i_N)\leq \frac{1}{2} d_i$, i.e.~(i) holds for
  large enough $i$. 
  
  We therefore assume further that (i) does not hold, $\kappa_1=\|u_{01}\|\geq C^{-1}/4$,
  and we choose $\kappa < \kappa_1$. In particular, the value of
  $\rd^*u_{01}$ on the two planes differs by at least $C^{-1}$ for some
  $C > 0$. 

  In the rest of the proof our goal
  is to show that if $\kappa$ is sufficiently small (i.e. $N$ is
  large), and (ii) also does not hold, then we get a contradiction. The
  basic idea is that in this case the flow $M^i_s$ for $s\in [N-3,N]$ would have distance of
  order $\kappa d_i$ from a pair of planes whose Lagrangian angles
  differ by $\kappa_1 d_i$. If $M^i_N$ is connected and $\kappa \ll
  \kappa_1$, then as in \cite[Theorem B]{Neves:zero-maslov}, one might
  expect that this leads to a contradiction. The difficulty is that we
  need a quantitative version of this idea, which works uniformly as
  $d_i\to 0$. 

  We are assuming that (ii) fails, therefore
  $$ \mathcal{A}_\alpha(M^i_{N-3}) -
    \mathcal{A}_\alpha(M^i_N) < D_{V'_i}(M^i_N) \leq
    C_Nd_i. $$
  Using Lemma~\ref{lem:Aepschange} this implies
  \begin{equation}\label{eq:Aeps10}
    \int_{N-3}^N |\mathcal{A}(M^i_s)|^{\alpha-1} \int_{M^i_s}
    (|\mathbf{H}|^2 + |\mathbf{x}^\perp|^2) e^{-|\mathbf{x}|^2/4}\, ds \leq \alpha^{-1} Cd_i.
    \end{equation}
  From Proposition~\ref{prop:Aest} there is a small $\alpha_1>0$ so that 
  $$ |\mathcal{A}(M^i_s)| \leq D_{V'_i}(M^i_s)^{1+\alpha_1}\text{ for } s\in [N-3, N]. $$
Using this in \eqref{eq:Aeps10} for $\alpha\in(0,1)$ we get 
  $$  \int_{N-3}^N \int_{M^i_s} (|\mathbf{H}|^2 + |\mathbf{x}^\perp|^2) e^{-|\mathbf{x}|^2/4}\, ds
    \leq C_\alpha d_i^{1+ (1-\alpha)(1+\alpha_1)}, $$
  for an $\alpha$-dependent constant $C_\alpha>0$. For $\alpha$
  sufficiently small and $0<2\alpha_2<\alpha_1/2$, we see that 
  $$1+(1-\alpha)(1+\alpha_1) > 2 +
  \frac{\alpha_1}{2}>2+2\alpha_2.$$
Therefore, for sufficiently large $i$ we have 
$$  \int_{N-3}^N \int_{M^i_s\cap B_2} |\mathbf{H}|^2 + |\mathbf{x}^\perp|^2 \, ds \leq
    d_i^{2+2\alpha_2}, $$
where we have removed the Gaussian weight by restricting to $M^i_s\cap B_2$.

  Let $\sigma > 0$ be small, to be chosen later, independent of $i$.
  We can find  $s^i_1, s^i_2\in [N-2, N-1]$ such that 
\begin{equation} \label{eq:sidiff} \frac{\sigma}{2} < s^i_2 -
  s^i_1 < \sigma, 
\end{equation} 
 and, in addition, at $s^i_j$ for $j=1,2$ we have
  \begin{equation}\label{eq:Hint30}
    \int_{M_{s^i_j}\cap B_2} |\mathbf{H}|^2 + |\mathbf{x}^\perp|^2 \leq d_i^{2+\alpha_2}
  \end{equation}
  for $i$ sufficiently large (depending on $\sigma$ as well). 

  Note that because of the bound $D_{V_i'}(M^i_{N-3}) \leq C_Nd_i$ and Proposition \ref{prop:graphical} we have that $M^i_s$
  has good graphicality
  over $V_i'$
  on the annulus 
  $B_{R_{C_Nd_i}} \setminus B_{C_0d_i^{1/2}}$ for $s\in [N-2,
  0]$.
  This graphicality and Condition \eqref{cond:ast} implies that the
  integral of the Liouville form $\lambda$ vanishes on any loop in
  $M^i_s\cap B_{R_{C_Nd_i}}$, so we can define  primitives
  $\beta$ satisfying $\rd\beta = \lambda$ on $M^i_s\cap B_{R_{C_Nd_i}}$. Restating
 \cite[Proposition 6.1]{Neves:zero-maslov} for the rescaled flow, we can
  choose the primitives $\beta$ along the flows $M^i_s$ such that $e^{-s}(\beta +
  2\theta)$ satisfies the drift heat equation \eqref{eq:dh20}.

  Our next goal is to estimate $\beta$ at the times $s^i_1,
  s^i_2$.   First we consider what happens on the ball $B_{C_0d_i^{1/2}}$. For simplicity we write $M$ for $M^i_{s^i_1}$ or
  $M^i_{s^i_2}$. 

\begin{claim}\label{claim:beta.1}
There is a constant $\beta_0$ (depending on $s,i$) such that $|\beta-\beta_0|<\kappa d_i$ on $M\cap B_{C_0d_i^{1/2}}$.
\end{claim}  
  
 \begin{proof} 
  Let  $\tilde{M} = d_i^{-1/2} M$  and let $\tilde{\beta}$ on $\tilde{M}$ be given by
  $\tilde{\beta}(p) = d_i^{-1}\beta(d_i^{1/2}p)$. Then $\tilde{M}$ is
  a connected, almost calibrated Lagrangian, with uniform area bounds,
  satisfying
  $$ \int_{\tilde{M}\cap B_{C_0}} |\nabla \tilde{\beta}|^2 = d_i^{-2} \int_{M\cap
      B_{C_0d_i^{1/2}}} |\nabla \beta|^2 \leq d_i^{\alpha_2}, $$
  using \eqref{eq:Hint30} and $|\nabla\beta| = |\mathbf{x}^\perp|$. (Recall that $\alpha_2>0$.) We also have a uniform bound 
  $$|\nabla\tilde{\beta}(p)|_{\tilde{M}} =
  d_i^{-1/2} |\nabla\beta(d_i^{1/2}p)|_M \leq C$$ 
  for $p\in B_{C_0}$,  
  so  \cite[Lemma 3.7]{Neves:survey}, together with the connectivity
  assumption in Condition \eqref{cond:ast}, implies that $\mathrm{osc}\,
  \tilde{\beta} \to 0$ as $i\to\infty$. It follows that on
  $M\cap B_{C_0d_i^{1/2}}$ we have $\mathrm{osc}\, (\beta) < \kappa d_i$
  for sufficiently large $i$. Setting $\beta_0 = \beta(q)$ for some
  $q\in B_{C_0d_i^{1/2}}$ yields the claim. 
\end{proof}  

  Next we extend this pointwise bound on $B_{C_0d_i^{1/2}}$ to an integral bound on the (larger) ball $B_1$, by using the
  integral estimate for $|\nabla\beta|^2$ from \eqref{eq:Hint30} again.
  
  \begin{claim}\label{claim:beta.2}
  For sufficiently large $i$,
  \begin{equation}
  \label{eq:betaL2}
   \int_{M\cap B_1} |\beta-\beta_0|^2 \leq 9
    \kappa^2
    d_i^2\, .
    \end{equation}
  \end{claim} 
  
  \begin{proof} Since on the annulus
  $B_1\setminus B_{C_0d_i^{1/2}}$ the surface $M$ has good graphicality 
  over $V'_i$, we can view $\beta$ as a function $b$ on two copies of
  the annulus $B_1 \setminus B_{C_0d_i^{1/2}}\subset \mathbb{R}^2$, where we use polar coordinates $r,\phi$.   Using
  \eqref{eq:Hint30} we then have
  $$ \int_{B_1 \setminus B_{C_0d_i^{1/2}}} |\nabla b|^2 \leq
    2d_i^{2+\alpha_2} $$
  for sufficiently large $i$. For each $r\in [C_0d_i^{1/2}, 1]$ and
  $\phi\in [0,2\pi]$ we have
  $$ |b(r, \phi)-\beta_0| \leq \kappa d_i + \int_{C_0d_i^{1/2}}^r |\nabla
    b|(s,\phi)\, ds, $$
    where $\beta_0$ is the constant given by Claim~\ref{claim:beta.1}.  Therefore,
  \begin{equation}\label{eq:intb1}
    \begin{aligned}  \int_0^{2\pi} \int_{C_0d_i^{1/2}}^1 &|b(r,
      \phi) - \beta_0|^2\, r\,dr\,d\phi\\
      & \leq 
      2\pi\kappa^2 d_i^2+ 2 \int_0^{2\pi}
      \int_{C_0d_i^{1/2}}^1 \left( \int_{C_0d_i^{1/2}}^r |\nabla
        b|(s,\phi)\,ds\right)^2\, r\, dr\, d\phi.
    \end{aligned} \end{equation}
  Note that using H\"older's inequality we have
 \begin{align*} \int_0^{2\pi}
      \int_{C_0d_i^{1/2}}^1 \Bigg( \int_{C_0d_i^{1/2}}^r |\nabla
         b| &(s,\phi)\,ds\Bigg)^2\, r\, dr\, d\phi\\
         &\leq C
      \int_0^{2\pi}\int_{C_0d_i^{1/2}}^1 \int_{C_0d_i^{1/2}}^r |\nabla
      b|^2(s,\phi)\,s\, ds\, r^2dr\, d\phi \\
      &\leq C \int_0^{2\pi} \int_{C_0d_i^{1/2}}^1 |\nabla b|^2
      (s, \phi)\, ds\, d\phi \\
      &\leq C d_i^{2+\alpha_2} \leq \kappa^2 d_i^2, 
    \end{align*}
  once $i$ is large enough (so that $d_i$ is small). 
The result follows for $i$ sufficiently large, combined with Claim \ref{claim:beta.1} and the uniform area ratio bounds.
\end{proof}
 
We now show that we can get a similar pointwise estimates to Claim~\ref{claim:beta.1} on $K_i\cap M$, where we recall the compact sets $K_i$ given at the start of the proof.   

\begin{claim}\label{claim:beta.3}
Up to replacing the compact sets $K_i$ by smaller sets (still exhausting
    $\mathbb{C}^2\setminus \{\mathbf{0}\}$ in the limit as $i\to\infty$), there is a constant $C>0$ and $p>1$ such that 
        \begin{equation}\label{eq:betaKbound}
      |\beta - \beta_0| \leq C\kappa d_i e^{|\mathbf{x}|^2 / 8p} \text{ on } K_i\cap M.
    \end{equation}
\end{claim}

\begin{proof}  Recall the decomposition of $u$ in \eqref{eq:udec30}.   
  We now  show  that  $\nabla\beta$ is of order $\kappa d_i$ on
    compact sets away from $0$,
    because the term $u_{01}$ in \eqref{eq:udec30}  does not contribute to
    $\mathbf{x}^\perp$, being homogeneous of degree 1. More precisely, recall
    that $M^i_s$ is the graph of $u_i(s)$ over $K_i\cap V$   and note that  $d_i^{-1} \mathbf{x}^\perp$ on $M^i_s$ converges locally smoothly as $i\to\infty$ 
    to $u - \mathbf{x}\cdot \nabla u$ on $V\setminus\{\mathbf{0}\}$. Since $u_{00},
    u_{01}$ have degree 1, it follows that they have no contribution to $u-\mathbf{x}\cdot\nabla u$.  Therefore,  as $i\to\infty$ we have
    $d_i^{-1} |\nabla \beta| \to |u^\perp - \mathbf{x}\cdot \nabla
      u^\perp|$ locally smoothly. 
    At the same time, by Claim~\ref{claim:uperp} we have $\Vert u^\perp(N-4)\Vert
    \leq C\kappa$ and so, by Proposition~\ref{prop:dhest11}, for
    $s\in [N-3,N]$ we also have pointwise bounds
    \begin{equation}\label{eq:up1} |u^\perp|, |\nabla u^\perp| \leq C
      \kappa e^{|\mathbf{x}|^2/8p} 
      \end{equation}
      for some $C>0$ and $p>1$.
    Returning to the setting where $M = M^i_{s^i_j}$ for $j=1,2$ and 
    using \eqref{eq:betaL2} we
    can integrate the estimate we have for $|\nabla \beta|$ to find
    that up to replacing $K_i$ by smaller sets and decreasing $p$ we get \eqref{eq:betaKbound}.
    \end{proof}

    We also need a more global estimate for $\beta$ and $\theta$, up to the good
    graphicality radius $R_{Cd_i}$ (from Definition~\ref{dfn:Rd}) on $M^i_s$ for $s\in [N-3,N]$.
  
\begin{claim}\label{claim:beta.4}  Recall Definition~\ref{dfn:Rd}.
There is $C>0$ and $p>p_0>1$
such that for $s\in [N-3,N]$
we have 
    \[\label{eq:betabound2}
      |\beta - \beta_0| &\leq C d_i e^{|\mathbf{x}|^2 / 8p} \text{ on }
      M^i_s\cap B_{R_{C_Nd_i}}\setminus B_{1/2},\\
      |\theta - \theta_{V_i'}| &\leq Cd_i e^{|\mathbf{x}|^2/8p} \text{ on }
      M^i_s\cap B_{R_{C_Nd_i}}\setminus B_{1/2},
    \]
    once $i$ is sufficiently large.
\end{claim}    

\begin{proof}    
 By the smooth convergence of $d_i^{-1}u_i \to u$ on
  the annulus $B_2\setminus B_{1/2}$, together with the bounds
  \eqref{eq:uL31},  we have estimates
$$ |\nabla\beta|, |\Delta \beta|, |\theta-\theta_{V'}|, |\nabla\theta|, |\Delta\theta| \leq
  Cd_i $$
on $M^i_s\cap B_2\setminus B_{1/2}$ for $s\in [N-3,N]$.
Using the evolution equation for $\beta$ and
\eqref{eq:betaKbound}
it then follows that 
\[\label{eq:beta.B2.bound}
|\beta - \beta_0|
\leq Cd_i\quad\text{ on }M^i_s\cap B_2\setminus B_{1/2}
\] for $s\in [N-3, N]$.
To extend
this estimate out to distance $R_{C_Nd_i}$ we first observe that, by
    Proposition~\ref{prop:graphical}, on  
    $B_{R_{C_Nd_i}}\setminus B_1$ the surface $M^i_s$ is the graph of $u_i$
    over $V_i'$ satisfying $|u_i|, |\nabla u_i| \leq C d_i
    e^{|\mathbf{x}|^2/8p}$ for some $p > p_0>1$. Moreover, by the definition of
    $R_{C_Nd_i}$ in \eqref{eq:Rddefn} and the fact that $p>p_0$ we know that $d_i e^{
      R_{C_Nd_i}^2/8p} \to 0$ as $i\to \infty$. Since the leading order
    term in $\mathbf{x}^\perp$ is $u_i - \mathbf{x}\cdot \nabla u_i$ for $i$ large, we see that on
    $M^i_s\cap B_{R_{C_Nd_i}}\setminus B_1$ for $s\in [N-3,N]$ we have (decreasing $p>p_0$ if necessary)
    $$ |\nabla \beta| = |\mathbf{x}^\perp| \leq C d_i e^{|\mathbf{x}|^2/8p} $$
    for sufficiently large $i$. Integrating this and using
    our bound \eqref{eq:beta.B2.bound} on $B_2\setminus B_{1/2}$ implies that for an even smaller $p>p_0 > 1$ we have the estimate \eqref{eq:betabound2} for $i$ sufficiently large and $s\in [N-3,N]$.
    The bound for $\theta$ in \eqref{eq:betabound2} follows similarly,
    since to leading order $\theta -\theta_{V_i'}$ is given by 
    $\rd^*u_i$ for $i$ large. 
    \end{proof}

Recall the times $s^i_2>s^i_1$ which satisfy \eqref{eq:sidiff} depending on some   small $\sigma>0$, which we are free to choose.    We will now use that $e^{-s}(\beta + 2\theta)$ and $\theta$
    satisfy the drift heat equation \eqref{eq:dh20} to derive pointwise estimates in
    $B_2$ for $\theta$ at time $s^i_2$ in terms of an integral
    estimate for $\beta$ at time $s^i_1$. 

\begin{claim}    
    Let
    $$ h = e^{s^i_1 -s}(\beta - \beta_0 + 2(\theta -\theta_{V_i'})) - 2(\theta -\theta_{V_i'})$$ on $M^i_s$
    so that $h(s^i_1) = \beta - \beta_0$. There is some $C>0$ independent of $i$ such that
    \[\label{eq:h.B2.bound} \sup_{M^i_s\cap B_2} h^2 \leq C\kappa^2 d_i^2 \quad \text{ at } s=s^i_2  \]
    and hence
    \[\label{eq:theta.B2.bound}
    \mathrm{osc}\,\theta\leq C\kappa d_i\quad\text{ on }M^i_{s^i_2}\cap B_2\setminus B_{1/2}.
    \]
    \end{claim}
  
  \begin{proof}  
     Since we have only defined $\beta$ on
    the ball $B_{R_{Cd_i}}$ we need to incorporate a cutoff
    function. In fact even if $\beta$ were defined globally we would
    need such a cutoff if we do not assume that $\beta$ has uniform polynomial   growth bounds.
    
    To that end, let $\chi_0:[0,\infty) \to\mathbb{R}$ denote a cutoff function with $\chi_0(t)=1$
    for $t < (R_{C_Nd_i}-1)^2$ and $\chi_0(t)=0$ for $t > R_{C_Nd_i}^2$.
    We
    can arrange that $\chi, \chi', \chi''$ are uniformly bounded
    independently of $i$. Let $\chi(x) = \chi_0(|x|^2)$.      Note that $\chi^2 h^2$ is then defined
    globally.

 Recall $\mathcal{L}_0$ given in \eqref{eq:L0}.   Along the rescaled flow (for surfaces) we have
    \begin{equation}\label{eq:x21} (\partial_s - \mathcal{L}_0) |\mathbf{x}|^2
      = |\mathbf{x}^{\rm T}|^2 - 4
      \end{equation}
    and so 
    \begin{equation}\label{eq:x22}
      (\partial_s - \mathcal{L}_0) e^{s^i_1-s}|\mathbf{x}|^2 \leq -2\quad \text{
        for }s\in [s^i_1, s^i_2].
    \end{equation}
  We also have
    $$ (\partial_s - \mathcal{L}_0) \chi = \chi_0'(|\mathbf{x}|^2) (\partial_s -
      \mathcal{L}_0) |\mathbf{x}|^2 - \chi_0''(|\mathbf{x}|^2) |\nabla |\mathbf{x}|^2|^2. $$
    Hence,
    \begin{equation}\label{eq:chiest10}
      |(\partial_s - \mathcal{L}_0) \chi| \leq  C|\mathbf{x}|^2
      \end{equation}
    and $(\partial_s - \mathcal{L}_0)\chi$ is supported on the set where
    $R_{C_Nd_i} -1 < |\mathbf{x}| < R_{C_Nd_i}$.
    
Since $h$ satisfies the drift heat equation \eqref{eq:dh20}, we may compute
    $$(\partial_s - \mathcal{L}_0) (\chi^2 h^2) = 2\chi h^2
        (\partial_s - \mathcal{L}_0)\chi - 2|\nabla \chi|^2 h^2 -
        2\chi^2 |\nabla h|^2 - 8\chi h \nabla \chi\cdot\nabla h.$$
    We use the inequality $|8\chi h\nabla\chi\cdot\nabla h|\leq
    2\chi^2 |\nabla h|^2 + 8 h^2|\nabla \chi|^2$ and the estimate
    \eqref{eq:chiest10} to get
    \begin{equation}\label{eq:chi2h2} (\partial_s - \mathcal{L}_0)
      (\chi^2 h^2) \leq \begin{cases} C 
        |\mathbf{x}|^2 h^2  &\text{ for } R_{C_Nd_i} - 1 < |\mathbf{x}| < R_{C_Nd_i}, \\
        0 &\text{ otherwise.}\end{cases} \end{equation}
By Claim~\ref{claim:beta.4}, there is $p>p_0 >1$
so that we also have the bound
\[\label{eq:h.bound} h^2\leq C d_i^2 e^{R_{Cd_i}^2 / 4p}\quad\text{ for
}    1 <|\mathbf{x}| < R_{Cd_i}.\] 
We now define the function
    $$ \Theta =\Big( e^{s_1^i-s} |\mathbf{x}|^2 -
      e^{-\sigma} (R_{Cd_i}-1)^2\Big)_+, $$
     where $(\ldots)_+$ means the positive part of the function and
      $\sigma$ is the constant in \eqref{eq:sidiff}.
      Using \eqref{eq:x22} and $s\in[s_1^i,s_2^i]$ we have
    that
    \[\label{eq:Theta.evol} (\partial_s - \mathcal{L}_0) \Theta \leq \begin{cases} -2  &\text{
          when }e^{s_1^i-s} |\mathbf{x}|^2 > e^{-\sigma} (R_{Cd_i} -1)^2, \\
        0  &\text{ otherwise, } \end{cases}\]
    in the distributional sense.  Note that by \eqref{eq:sidiff} we have $e^{s-s^i_1-\sigma}<1$ for $s\in [s_1^i,s_2^i]$. 
  We deduce from \eqref{eq:chi2h2}, \eqref{eq:h.bound} and \eqref{eq:Theta.evol} that there is some $C_1>0$ so that 
    \begin{equation}\label{eq:ch10}
      (\partial_s - \mathcal{L}_0) \Big(\chi^2h^2 + C_1d_i^2 R_{Cd_i}^2
      e^{R_{Cd_i}^2/4p} \Theta\Big) \leq 0\quad  \text{ for } s\in [s^i_1,
      s^i_2].
    \end{equation}

    At $s=s^i_1$ we have $\chi^2h^2 = \chi^2(\beta-\beta_0)^2$, and so using
    \eqref{eq:betaL2}, \eqref{eq:betabound2},
    together with the uniform area ratio bounds for $M^i_s$, we can
    ensure that
    \begin{equation}\label{eq:ch11}
      \int_{M^i_{s^i_1}} \chi^2 h^2 e^{-|\mathbf{x}|^2/4}  \leq C\kappa^2
      d_i^2,
      \end{equation}
    once $i$ is large enough. To
    estimate the integral of $\Theta$ at $s=s^i_1$, note that
    $\Theta(\mathbf{x}) = 0$ if $s=s^i_1$ and $|\mathbf{x}| < e^{-\sigma/2}
    (R_{Cd_i}-1)$, which holds if $|\mathbf{x}| < e^{-\sigma}R_{Cd_i}$ once $i$
    is large. For all $\mathbf{x}$ we have $\Theta(\mathbf{x}) \leq
    |\mathbf{x}|^2$ and so by our previous  observation we have
    $$ \int_{M^i_{s^i_1}} \Theta e^{-|\mathbf{x}|^2/4}\leq \int_{M^i_{s^i_1}
          \setminus B_{e^{-\sigma}R_{Cd_i}}} |\mathbf{x}|^2 e^{-|\mathbf{x}|^2/4} \leq C
        e^{-2\sigma} R_{Cd_i}^2 e^{- e^{-2\sigma} R_{Cd_i}^2  / 4}. $$
    If $\sigma$ is chosen sufficiently small, so that $e^{2\sigma} <
    p$ for the $p>1$ in Claim~\ref{claim:beta.4}, see 
    \eqref{eq:h.bound}, then we will have
    \[ \label{eq:final_est}\int_{M^i_{s^i_1}} d_i^2 R_{Cd_i}^2 e^{R_{Cd_i}^2/4p} \Theta\, 
      e^{-|\mathbf{x}|^2/4} \leq C d_i^2 R_{Cd_i}^4  \exp\left(
        \frac{R_{Cd_i}^2}{4p} -
        \frac{R_{Cd_i}^2}{4e^{2\sigma}}\right) \leq \kappa^2 d_i^2   \]
    for sufficiently large $i$. Combining this with \eqref{eq:ch11}
    and using \eqref{eq:ch10} we can apply the monotonicity formula to
    obtain pointwise estimates for $\chi^2h^2$ at $s=s^i_2$. Since
    $s^i_2 > \sigma/2$, and $\sigma$ only depends on $p$ in
    \eqref{eq:final_est},
    we obtain the estimate \eqref{eq:h.B2.bound} for $h^2$.
    
    At $s=s^i_2$ we have
   \begin{equation*}
 \begin{split}
 h &=  e^{s^i_1 -s_2^i}(\beta - \beta_0 + 2(\theta -\theta_{V_i'})) - 2(\theta -\theta_{V_i'})\\
 &= e^{s^i_1 -s_2^i} \beta + 2(e^{s^i_1 -s_2^i} -1) \theta - e^{s^i_1 -s_2^i}(\beta_0 + 2\theta_{V_i'}) + 2 \theta_{V_i'}\, .
\end{split}
\end{equation*}
    and at this time the oscillation of $\beta$ on $B_2\setminus
    B_{1/2}$ is  bounded by $C\kappa d_i$ from
    \eqref{eq:betaKbound} for $i $ large. Noting \eqref{eq:sidiff}, it follows that the oscillation of $\theta$
    is also bounded by $C\kappa d_i$ on this annulus for $i$ sufficiently large as claimed.
    \end{proof}

To complete the proof, recall the decomposition \eqref{eq:udec30}   and that on $B_2\setminus B_1$ we know that
    $M^i_{s^i_2}$ is the graph of $u_i$ over $V'_i$, where $d_i^{-1} u_i
    \to u_{01} + u^\perp$, and so $d_i^{-1}(\theta - \theta_{V'_i})
    \to \rd^*(u_{01} + u^\perp)$. From \eqref{eq:up1} we have
    $|\rd^*u^\perp|\leq C\kappa$ on $B_2\setminus B_1$, while the value of $\rd^*u_{01}$ on the
    two planes differs by at least $C^{-1}$. Therefore the oscillation
    of $\rd^*(u_{01} + u^\perp)$ on $B_2\setminus B_1$ is at least
    $C^{-1} - C\kappa > C^{-1}/2$ if we choose $\kappa$ sufficiently
    small. The oscillation of $\theta$ on $B_2\setminus B_{1/2}$ is
    therefore at least $C^{-1}d_i/4$ for large $i$. This contradicts the bound \eqref{eq:theta.B2.bound} if $\kappa$ is sufficiently small. 
  \end{proof}
  
  \subsection{Closeness to planes}

We next show that if condition (iii) in Proposition~\ref{prop:decay10} 
holds, then we can still arrange that the flow remains close to the original pair of planes
$V_0$ as long as the change in the excess $\mathcal{A}$ is controlled. From now on 
we fix $ N_1>0$ and $\alpha\in(0,1)$ such that
Proposition~\ref{prop:decay10} applies, and we assume that $N>N_1$ is a fixed integer
large enough so that
Proposition~\ref{prop:3ann2} applies to $N^2$ and such that
\[\label{eq:bound-N} e^{-\lambda_1'N^2}<1/2\, .\]  Again recall that $\lambda_1'>0$ is given by Proposition~\ref{prop:3ann2}.
\begin{prop}\label{prop:d20} 
  Let $\delta_1 > 0$. There is an $\epsilon_1 > 0$ depending on
  $\delta_1$ such that, if we have a flow $M_\tau$ satisfying
  Condition \eqref{cond:ddag} for $\tau\in [-1,T+10]$, $M_1$ satisfies Condition
  \eqref{cond:ast}, and
  \begin{enumerate}
  \item $\mathcal{A}_\alpha(M_0) - \mathcal{A}_\alpha(M_T) < \epsilon_1$,
  \item $D_V(M_0) < \epsilon_1$, for some $V\in \mathcal{V}'$, 
  \item  $D_V(M_{N^2}) \geq e^{\lambda_1'N^2} D_V(M_0)$, 
  \end{enumerate}
  then $D_V(M_T) < \delta_1$. 
\end{prop}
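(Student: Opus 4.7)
The plan is to argue by contradiction, using hypothesis (3) as the input to the three-annulus lemma (Proposition~\ref{prop:3ann2}) and leveraging the bound on the total change in $\mathcal{A}_\alpha$ from hypothesis (1) to close the argument.

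First, I would iterate the three-annulus lemma at the scale $N^2$. Hypothesis (3) provides the required base case. Since $\lambda_2' > \lambda_1'$, each application produces a conclusion stronger than is needed to feed the next application (applied to the time-shifted flow), so by induction I obtain
\[
  D_V(M_{(k+1)N^2}) \geq e^{\lambda_2' N^2} D_V(M_{kN^2})
\]
for each $k \geq 1$ for which $D_V(M_{kN^2})$ remains below the applicability threshold of Proposition~\ref{prop:3ann2}. Combined with hypothesis (3) as the base case and the assumption $e^{-\lambda_1' N^2} < 1/2$, this gives compound exponential growth $D_V(M_{kN^2}) \geq 2^k D_V(M_0)$ at scale $N^2$.

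Second, I would suppose for contradiction that $D_V(M_T) \geq \delta_1$ and, shrinking $\delta_1$ if necessary so that it lies below the applicability threshold of Proposition~\ref{prop:3ann2}, use the iterated growth to conclude that there is a sequence of time windows of length $N^2$ on each of which $D_V$ grows by a definite factor. For each such window, I would produce a matching definite loss in $\mathcal{A}_\alpha$, drawing on the quantitative analysis underlying Proposition~\ref{prop:decay10}: a flow cannot persistently grow along a non-static direction of the drift heat equation (as formalised in Lemma~\ref{lem:3ann1}) without incurring a corresponding decrease in the monotone quantity $\mathcal{A}_\alpha$, as captured by Lemma~\ref{lem:Aepschange} and the excess bound in Proposition~\ref{prop:Aest}. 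Summing the losses across windows up to the first time $D_V$ reaches $\delta_1$ yields a total loss bounded below by some $c(\delta_1) > 0$, and taking $\epsilon_1 < c(\delta_1)$ contradicts hypothesis (1).

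The main obstacle is making the conversion from growth in alternative (iii) into a quantitative $\mathcal{A}_\alpha$ loss both precise and uniform across the scales $k N^2$. I would attack this through a blow-up and compactness argument in the spirit of Proposition~\ref{prop:limitu10}: assume the conclusion fails along a sequence of flows with $D_V(M_0) \to 0$, extract a limiting solution $u$ of the drift heat equation \eqref{eq:dh20} that has a non-trivial growing (or non-static) component, and use this to pinpoint a definite portion of the $\mathcal{A}_\alpha$ decrease, analogous to how the identification of a non-vanishing $u_{01}$-component in the proof of Proposition~\ref{prop:decay10} was converted, via the evolution of $\beta + 2\theta$ and the oscillation estimate for $\theta$, into a contradiction. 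A secondary technical issue is handling the transition when $D_V(M_{kN^2})$ approaches the three-annulus threshold, which I expect to deal with using Lemma~\ref{lem:DVV'comp} to switch to a nearby $V'$ for which the iteration can be restarted.
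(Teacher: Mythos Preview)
Your iteration of the three-annulus lemma in the first paragraph matches the paper's opening move exactly. After that, however, you diverge from the paper in a way that introduces an unnecessary obstacle.

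The paper does \emph{not} attempt to convert each growth window into a quantum of $\mathcal{A}_\alpha$-loss and sum these up. Instead it observes that the exponential growth $D_V(M_{kN^2})\geq e^{\lambda_1'N^2}D_V(M_{(k-1)N^2})$ can only persist while $D_V$ stays below the threshold $\epsilon_0$ needed to apply Proposition~\ref{prop:3ann2}. There are then two cases: either the iteration runs all the way to time $T$, in which case $D_V(M_T)\leq C\epsilon_0<\delta_1$ by Proposition~\ref{prop:Dgrowth1}; or the iteration halts at some index $k$ with $D_V(M_{(k-2)N^2})\geq\epsilon_0$, in which case there is a \emph{single} window (at the fixed scale $\epsilon_0$) on which $D_V$ jumps by at least $\epsilon_0/2$. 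The contradiction is then obtained by a compactness argument at that one scale: along a sequence with $\epsilon_{1,i}\to 0$, the flows converge (as in Neves's structure theorem) to a static union of planes $V'\in\mathcal{V}$, and then the non-concentration estimate (part (3) of Lemma~\ref{lem:IVest}) forces $D_V(M^i_{N^2})-D_V(M^i_0)\to 0$, contradicting the jump.

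Your proposed mechanism---growth forces $\mathcal{A}_\alpha$-loss---is precisely what is \emph{not} available here. In the trichotomy of Proposition~\ref{prop:decay10}, alternative (iii) is the case where neither decay (i) nor $\mathcal{A}_\alpha$-loss (ii) can be extracted; growth is the residual case, not the one that pays in $\mathcal{A}_\alpha$. Your suggested blow-up would produce a limit $u$ with a nontrivial growing component, but converting this into a uniform lower bound on $\int(|\mathbf{H}|^2+|\mathbf{x}^\perp|^2)e^{-|\mathbf{x}|^2/4}$ at each scale $d_k$, and then into a summable lower bound on $\mathcal{A}_\alpha$-loss via Lemma~\ref{lem:Aepschange}, requires both an upper bound on $|\mathcal{A}|$ (which you have from Proposition~\ref{prop:Aest}) and a matching \emph{lower} bound on the dynamical integral; you have not supplied the latter, and it does not follow from any result in the paper. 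The paper's route sidesteps this entirely by reducing to a single compactness argument at a fixed scale. Your ``secondary technical issue'' (switching cones via Lemma~\ref{lem:DVV'comp}) is likewise unnecessary: the paper never changes $V$ in this proof.
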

\begin{proof}
  Using assumptions (2), (3), if $\epsilon_1$ is sufficiently small
  then we can apply Proposition~\ref{prop:3ann2} to deduce
  $D_V(M_{2N^2}) \geq e^{\lambda_1' N^2} D_V(M_{N^2})$. We can keep
  iterating this estimate, to get
  $$ D_V(M_{iN^2}) \geq e^{\lambda_1' N^2} D_V(M_{(i-1)N^2}) $$
  for $i=3, 4, \ldots, k$, where $k$ is the largest integer
  which still satisfies
  $$ D_V(M_{(k-3)N^2})  < \epsilon_0, \text{ and } kN^2 < T, $$
  for a constant $\epsilon_0$ that is smaller than the $\epsilon$ in
  Proposition~\ref{prop:3ann2}. 

  If $(k+1)N^2 \geq T$, then 
  Proposition~\ref{prop:Dgrowth1} implies that $D_V(M_T) \leq C\epsilon_0$. If
  $\epsilon_0$ is chosen sufficiently small (and $\epsilon_1 <
  \epsilon_0$), then this implies $D_V(M_T) < \delta_1$ as
  required. 
  
  We therefore assume that $(k+1)N^2 < T$ and 
  $D_V(M_{(k-2)N^2}) \geq \epsilon_0$. We have $D_V(M_{(k-3)N^2})
  \leq e^{-\lambda_1'N^2} D_V(M_{(k-2)N^2})$ so using \eqref{eq:bound-N} we have
  \begin{equation}\label{eq:DVchange}
    D_V(M_{(k-2)N^2}) - D_V(M_{(k-3)N^2}) \geq (1-
    e^{-\lambda_1'N^2}) D_V(M_{(k-2)N^2}) \geq
    \frac{\epsilon_0}{2}.
  \end{equation}
 We claim that \eqref{eq:DVchange} together with condition
  (1) and   $D_V(M_{(k-3)N^2}) < \epsilon_0$ leads to a
  contradiction if $\epsilon_0$ is chosen small, and $\epsilon_1$ is
  sufficiently small depending on $\epsilon_0$. 

  Shifting $\tau = (k-3)N^2$ to $\tau = 0$, we can thus suppose that we have a sequence of flows $M_\tau^i$ satisfying Condition \eqref{cond:ddag} for
  $\tau\in [-N^2, N^2]$, satisfying
  \begin{align*}
      \mathcal{A}_\alpha(M_{-N^2}^i) - \mathcal{A}_{\alpha}(M_{N^2}^i) <
      \epsilon_{1,i},\\
     D_V(M^i_{N^2}) - D_V(M^i_{0}) \geq
    \frac{\epsilon_0}{2}, \\
    D_V(M^i_{-N^2}) < \epsilon_0, 
    \end{align*}
for a sequence
  $\epsilon_{1,i}\to 0$. From this, together with the same argument as in
 \cite[Theorem A]{Neves:zero-maslov}, we know that as $i\to \infty$,
  along a subsequence, these flows converge  to a static flow given by a union of
  planes. By the bound on $D_V(M^i_{-N^2})$ these planes must be given by
  some $V' \in \mathcal{V}$ if $\epsilon_0$ is small enough. In
  particular for any $\tau\in [0, N^2]$
  the $M^i_\tau$ converge
  locally smoothly to $V'$ on compact sets in $\mathbb{C}^2\setminus
  \{\mathbf{0}\}$. 

  Since $M^i_{N^2}$ and $M^i_0$ both converge to $V'$, for any compact
  set $K\subset\mathbb{C}^2\setminus \{\mathbf{0}\}$ we have as $i\to\infty$:
  $$ \int_{M^i_{N^2}\cap K} (|\mathbf{x}|^2 d_V^2 + |\theta -\theta_V|^2)
    e^{-|\mathbf{x}|^2/4} - \int_{M^i_{0}\cap K} (|\mathbf{x}|^2 d_V^2 + |\theta -\theta_V|^2)
    e^{-|x|^2/4} \to 0. $$
  We can use the uniform bound $D_V(M^i_{-N^2}) <\epsilon_0$ and  argue
  as in the proof of Proposition~\ref{prop:3ann2}, in
  particular in \eqref{eq:diIV}, to show $D_V(M^i_{N^2}) - D_V(M^i_0)
  \to 0$ which gives our required contradiction.
\end{proof}

We can now prove our main result controlling the distance of flows
close to the union of two transverse planes by combining  Propositions~\ref{prop:decay10} and \ref{prop:d20}.  
\begin{prop}\label{prop:d30}
  Let $\delta_2 > 0$. There is an $\epsilon_2 > 0$ depending on
  $\delta_2$, such that if we have a flow $M_\tau$ satisfying
  Condition \eqref{cond:ddag} for $\tau\in [-1,T+10]$, $M_1$ satisfies
  Condition \eqref{cond:ast}, and
  \begin{enumerate}
  \item $\mathcal{A}_\alpha(M_0) - \mathcal{A}_\alpha(M_T) < \epsilon_2$,
  \item $D_{V_0}(M_0) < \epsilon_2$,
  \end{enumerate}
  then $D_{V_0}(M_T) < \delta_2$. 
\end{prop}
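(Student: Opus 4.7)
The plan is to iterate Proposition~\ref{prop:decay10} in blocks of length $N$ in time, producing a sequence $V^{(0)} = V_0, V^{(1)}, V^{(2)}, \ldots$ of approximating pairs of planes in $\mathcal{V}'$ whose distances $d_k := D_{V^{(k)}}(M_{kN})$ decay, while the $V^{(k)}$ remain uniformly close to $V_0$. Starting from $V^{(0)}=V_0$, at each step I will apply Proposition~\ref{prop:decay10} to the flow shifted so that time $kN$ becomes~$0$, with input plane $V^{(k)}$; if alternative~(iii) fires at step $k$ I stop and invoke Proposition~\ref{prop:d20}, and otherwise alternative~(i) or~(ii) produces $V^{(k+1)}\in\mathcal{V}$ with $d(V^{(k)},V^{(k+1)})\leq Cd_k$ and
\[
d_{k+1} \leq \tfrac{1}{2} d_k + \Delta_k, \qquad \Delta_k := \mathcal{A}_\alpha(M_{(k+1)N-3}) - \mathcal{A}_\alpha(M_{(k+1)N}) \geq 0.
\]

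The key observation is that the time intervals $[(k+1)N-3,(k+1)N]$ are disjoint, so by the monotonicity in Lemma~\ref{lem:Aepschange} I obtain the global budget $\sum_k \Delta_k \leq \mathcal{A}_\alpha(M_0) - \mathcal{A}_\alpha(M_T) < \epsilon_2$. Iterating the recursion and switching the order of summation then yields $d_k \leq 2^{-k} d_0 + 2\sum_j \Delta_j \leq 3\epsilon_2$ uniformly in $k$, together with $\sum_k d_k \leq 2 d_0 + 2\sum_k \Delta_k \leq 4\epsilon_2$. By Lemma~\ref{lem:DVV'comp} this gives $d(V_0, V^{(k)}) \leq C\sum_{j<k} d_j \leq 4C\epsilon_2$, which for $\epsilon_2$ small enough keeps every $V^{(k)}$ inside $\mathcal{V}'$ so that Proposition~\ref{prop:decay10} is indeed applicable at each step; meanwhile Condition~\eqref{cond:ast} along the flow is propagated by Lemma~\ref{lem:condast}.

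Two scenarios then remain. If alternative~(iii) never fires, I iterate until $(k+1)N > T$; since $N$ is a fixed integer, finitely many further applications of Proposition~\ref{prop:Dgrowth1} give $D_{V^{(k)}}(M_T) \leq C_N d_k \leq 3 C_N \epsilon_2$, and combining with $d(V_0, V^{(k)}) \leq 4C\epsilon_2$ through Lemma~\ref{lem:DVV'comp} bounds $D_{V_0}(M_T) \leq C'\epsilon_2 < \delta_2$ once $\epsilon_2$ is small. If alternative~(iii) does fire at some step $k$, I apply Proposition~\ref{prop:d20} to the flow shifted by $kN$ with $V = V^{(k)} \in \mathcal{V}'$: its hypotheses $d_k < \epsilon_1$ and $\mathcal{A}_\alpha(M_{kN}) - \mathcal{A}_\alpha(M_T) < \epsilon_1$ are guaranteed by taking $\epsilon_2$ small compared to $\epsilon_1$, and its conclusion $D_{V^{(k)}}(M_T) < \delta_1$, combined once more with the drift bound via Lemma~\ref{lem:DVV'comp}, yields $D_{V_0}(M_T) < \delta_2$ after $\delta_1$ and $\epsilon_2$ are chosen small enough.

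The main obstacle will be to verify that the drifting reference planes $V^{(k)}$ stay uniformly close to the original $V_0$ throughout the entire iteration, independently of the number of steps, so that Proposition~\ref{prop:decay10} remains applicable throughout. This is exactly what the recursion $d_{k+1}\leq\tfrac{1}{2}d_k + \Delta_k$, controlled by the global excess budget $\sum_k \Delta_k<\epsilon_2$, is engineered to deliver.
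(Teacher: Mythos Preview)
Your proposal is correct and follows essentially the same approach as the paper's proof: iterate Proposition~\ref{prop:decay10} in steps of length $N$, keep the drifting planes $V^{(k)}$ inside $\mathcal{V}'$ via the summable bound $\sum d_k\leq C\epsilon_2$, and handle the two exit cases (time runs out, or alternative~(iii) fires) exactly as you describe, invoking Proposition~\ref{prop:d20} in the latter case. The paper's proof is terser in that it outsources the recursion estimate $\sum e_i\leq 2e_0 + 2C(\mathcal{A}_\alpha(M_0)-\mathcal{A}_\alpha(M_T))$ to \cite[Theorem~6.7]{Sz20}, whereas you spell out the inequality $d_{k+1}\leq\tfrac12 d_k+\Delta_k$ and its consequences directly; this is the same argument.
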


\noindent Note that $T$ is independent of the constants $\delta_2, \epsilon_2$ and, in particular, can be large.

\begin{proof} 
  We iterate the cases (i), (ii) in Proposition~\ref{prop:decay10} as
  long as possible, starting with $V_0$. We obtain a sequence $V_1,
  \ldots, V_k \in \mathcal{V}$ together with numbers $e_i = D_{V_i}(M_{iN})$ such that
  $d(V_i, V_{i+1}) \leq Ce_i$. We can continue this iteration and
  define $V_{k+1}$ unless one of the following occurs: 
  \begin{itemize}
  \item[(a)] $kN + 3N^2 +2 > T$,
  \item[(b)] $V_k\not\in \mathcal{V}'$ or $D_{V_k}(M_{kN}) \geq
    \epsilon_0$, 
  \item[(c)] $D_{V_k}(M_{kN+N^2}) \geq e^{\lambda_1'N^2}
    D_{V_k}(M_{kN})$. 
  \end{itemize}
  We show that if $\epsilon_2$ is sufficiently small, then (b)
  cannot occur before (a) or (c) does. To see this we can argue as in
  \cite[Theorem 6.7]{Sz20} to control the sum of the $e_i$, to find
  \[\sum_{i=0}^ke_i \leq 2e_0 + 2C(\mathcal{A}_\alpha(M_0) -
    \mathcal{A}_\alpha(M_T)) \leq C\epsilon_2. \]
  In particular this implies that
  both $e_k$ and  $d(V_0, V_k)$ are bounded above by $C\epsilon_2$, so we can
  ensure that (b) does not occur for $\epsilon_2$ sufficiently
  small. In addition, using Lemma~\ref{lem:condast}, Condition
 \eqref{cond:ast} is preserved. 

  If (a) occurs first, then we have \[D_{V_k}(M_T)\leq CD_{V_k}(M_{kN})
  \leq C\epsilon_2\] by Proposition~\ref{prop:Dgrowth1}, and by Lemma~\ref{lem:DVV'comp} we get
  $D_{V_0}(M_T) \leq C\epsilon_2$.
  If (c) occurs first, then from Proposition~\ref{prop:d20} we
  conclude that $D_{V_k}(M_T) < \delta_1$ if we choose $\epsilon_2 <
  \epsilon_1$, and Lemma~\ref{lem:DVV'comp} implies $D_{V_0}(M_T) \leq
  C(\delta_1 +\epsilon_2)$.

  If we choose $\delta_1>0$ sufficiently small (determining a value for
  $\epsilon_1>0$), and then choose 
  $\epsilon_2>0$ small so that also $\epsilon_2 < \epsilon_1$, then in
  either case we will have $D_{V_0}(M_T) < \delta_2$ as required. 
\end{proof}

\section{Neck pinches}\label{sec:neckpinch}
In this section we give the main geometric applications of the
estimates we have obtained. We suppose that
$\mathcal{M}(t)$ is a rational, zero Maslov Lagrangian mean curvature
flow in $\mathbb{C}^2$, with uniformly bounded  area ratios and
uniformly bounded Lagrangian angle.

\subsection{Uniqueness of tangent flows} Our first result is the  uniqueness of tangent flows given by
 a union of transverse planes. Note that by  \cite[Corollary
 4.3]{Neves:survey} the two planes must have the same Lagrangian angle
 if a singularity forms. We first have the following, ensuring that
 Conditions \eqref{cond:ddag} and \eqref{cond:ast} hold along the corresponding rescaled
 flow.

 \begin{lemma}\label{lem:Conditions}
   Let $\mathcal{M}(t)$ be a mean
   curvature flow in $\mathbb{C}^2$, with initial condition given by
   a rational, zero Maslov Lagrangian with uniformly bounded area
   ratios and uniformly bounded Lagrangian angle. Suppose that the
   flow develops a singularity at $(\mathbf{0},0)$, 
   with a tangent flow given by the static flow $V_0$, where
   $V_0$ is a special Lagrangian union of two transverse
   planes. Let $M_\tau = e^{\tau/2}\mathcal{M}(-e^{-\tau})$ denote the
   corresponding rescaled flow. Then there is a sequence $\tau_i\to
   \infty$ satisfying
   \begin{enumerate}
   \item $D_{V_0}(M_{\tau_i})\to 0$,
   \item $M_\tau$ satisfies Condition \eqref{cond:ddag} for $\tau\in [\tau_0,
     \infty)$,
   \item $M_{\tau_i+1}$ satisfies Condition \eqref{cond:ast}.
   \end{enumerate}
 \end{lemma}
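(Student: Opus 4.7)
By the tangent flow hypothesis there is a sequence $\sigma_j\to\infty$ along which $M_{\sigma_j}$ converges to the static flow $V_0$ as Brakke flows on $\mathbb{C}^2$. Since $V_0$ is the union of two multiplicity one transverse planes, Brakke--White regularity upgrades this to smooth convergence of $M_{\sigma_j+s}$ to $V_0$ on compact subsets of $(\mathbb{C}^2\setminus\{\bOh\})\times\mathbb{R}$, uniform in $s$ on bounded intervals. The plan is to take $\tau_i$ to be a subsequence of $(\sigma_j)$, shifted so that $\tau_0$ is large enough for (2) to hold.

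For (1), the smooth convergence on $B_2\setminus B_1$ provides graphicality of $M_{\sigma_j}$ over $V_0$ with arbitrarily small graph norm for large $j$, so the graphicality requirement in the definition of $D_{V_0}$ is met. To show $I_{V_0}(M_{\sigma_j})\to 0$, I split the integrand $(|\bx|^2 d_{V_0}^2+|\theta-\theta_{V_0}|^2)e^{-|\bx|^2/4}$ into three regions: on a large annulus $B_R\setminus B_r$ it is controlled by the smooth convergence, using Neves's zero-Maslov structure theorem for the angle part $\theta\to\theta_{V_0}$; on the small ball $B_r$ by $|\bx|^2 d_{V_0}^2\leq|\bx|^4$ together with the uniform bound on $|\theta|$ and the area-ratio bound; on $\mathbb{C}^2\setminus B_R$ by the Gaussian weight against the polynomial area growth. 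Letting $r\to 0$, $R\to\infty$, and $j\to\infty$ in that order yields (1).

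For (2), the two planes in $V_0$ share a common Lagrangian angle $\theta_{V_0}$, so choosing $\theta_0=\theta_{V_0}$ in \eqref{eq:AMdefn} gives $\mathcal{A}(V_0)=0$. By Huisken's monotonicity the Gaussian area $\int_{M_\tau}e^{-|\bx|^2/4}$ decreases monotonically to the Gaussian area of $V_0$, namely $2\int_{\mathbb{R}^2}e^{-|\bx|^2/4}$, so its first two contributions to $\mathcal{A}$ are non-negative, and the infimum term is manifestly non-negative; hence $\mathcal{A}(M_\tau)\geq 0$. Combined with the monotonicity of $\mathcal{A}$ recorded in Definition~\ref{dfn:excess} and the fact that (1) implies $\mathcal{A}(M_{\sigma_j})\to 0$, we conclude $\mathcal{A}(M_\tau)\searrow 0$ as $\tau\to\infty$. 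Taking $\tau_0$ in the tail of $(\sigma_j)$ with $\mathcal{A}(M_{\tau_0})<c_0$ then gives Condition~\eqref{cond:ddag} on $[\tau_0,\infty)$.

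For (3), connectedness of $M_{\tau_i+1}\cap B_1$ for $i$ large follows from Hausdorff convergence of supports to the connected set $V_0\cap B_1$ together with the graphical structure on $B_2\setminus B_{1/2}$: the two graphical sheets there must join through $B_{1/2}$, either via a small Lawlor-type neck or via an immersed double point. For exactness, the rationality of $L_0$ and the constancy of $\lambda$-periods under LMCF confine these periods on every $L_t$ to a fixed discrete subgroup $\Lambda\subset\mathbb{R}$; since $\lambda$ is $2$-homogeneous, the periods on $M_{\tau_i+1}$ lie in $e^{\tau_i+1}\Lambda$. For any loop $\gamma\subset M_{\tau_i+1}\cap B_1$ of bounded length one has $|\int_\gamma\lambda|\leq C$, and for $i$ large the only element of $e^{\tau_i+1}\Lambda\cap[-C,C]$ is $0$, forcing $\int_\gamma\lambda=0$. \emph{The main obstacle} is therefore producing a generating set of $H_1(M_{\tau_i+1}\cap B_1)$ of uniformly bounded length, since a priori nothing controls the topology of $M_{\tau_i+1}$ inside $B_{1/2}$. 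I expect to handle this by showing that for $i$ large the region near the origin is modelled either on the two transverse planes (in which case $H_1(M_{\tau_i+1}\cap B_1)$ is trivial) or on a single Lawlor-type neck of radius comparable to the graphicality scale, producing a single new neck-circle generator of explicitly bounded length; the key additional inputs are the decay of $\mathcal{A}(M_\tau)$ together with the $L^2$-bound on $\mathbf{H}$ from Huisken's formula and the area-ratio bound.
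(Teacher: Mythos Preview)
Your arguments for (1) and (2) are essentially fine and match the paper's approach, though the paper is terser: for (2) it simply uses that $\mathcal{A}(M_\tau)$ is monotone and bounded below, hence convergent, so \eqref{cond:ddag} holds for $\tau_0$ large.

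There are two genuine gaps in (3). For connectedness, the assertion that ``the two graphical sheets must join through $B_{1/2}$'' is not justified: Hausdorff convergence to the connected set $V_0\cap B_1$ does not force connectedness of the approximants, and nothing you have said rules out two separate graphical disks. The paper argues differently. First, the pointwise angle bound from Lemma~\ref{lem:IVest} makes $M_{\tau_i+1}\cap B_2$ almost calibrated, so there are no compact components. Graphicality on $B_2\setminus B_{1/2}$ then forces exactly one or two components. If there were two, one invokes Neves~\cite[Corollary~4.3]{Neves:survey} to conclude that no singularity forms at $(\bOh,0)$, a contradiction. This last step is the substantive input you are missing.

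For exactness, you have correctly located the difficulty---producing generators of $H_1(M_{\tau_i+1}\cap B_1)$ of uniformly bounded length---but your proposed resolution (showing the region near $\bOh$ is either two planes or a single Lawlor neck) amounts to proving Theorem~\ref{thm:Lawlor} first, which depends on this lemma. The paper sidesteps the topology entirely. Since the periods of $\lambda$ lie in $2\pi a e^{\tau_i+1}\mathbb{Z}$, the function $f=\sin(e^{-\tau_i-1}a^{-1}\beta)$ is single-valued even though $\beta$ may not be, and $|\nabla f|\le e^{-\tau_i-1}a^{-1}$ on $B_1$. Almost calibration gives a uniform lower bound on intrinsic unit balls (as in \cite[Lemma~7.2]{Neves:zero-maslov}), and together with the area-ratio upper bound and the connectedness just established, this yields a uniform bound $C_1$ on the intrinsic distance between any two points of $M_{\tau_i+1}\cap B_1$. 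Hence $|f|\le C_1 e^{-\tau_i-1}a^{-1}$, and for large $i$ this is small enough to define a single-valued branch of $\beta$ on $B_1\cap M_{\tau_i+1}$, giving $\int_\gamma\lambda=0$ for every loop.
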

 \begin{proof}
     Note that
     the uniform bounds on area ratios and the bound for the
     Lagrangian angle is preserved along the flow. Condition \eqref{cond:ddag}
     holds on $[\tau_0, \infty)$ for sufficiently large $\tau_0$
     by the monotonicity of $\mathcal{A}(M_\tau)$. The fact that
     $D_{V_0}(M_{\tau_i})\to 0$ follows from the assumption that one
     tangent flow at $(\mathbf{0},0)$ is given by $V_0$. 

     It remains to show Condition \eqref{cond:ast} for $M_{\tau_i+1}$ for
     large enough $i$. Let us first consider the connectedness of
     $B_1\cap M_{\tau_i+1}$. Note that by the assumption
     $D_{V_0}(M_{\tau_i})\to 0$ we have that $M_{\tau_i+1}$ has good
     graphicality over $V_0$ on $B_2\setminus B_{1/2}$ for large
     $i$. For large $i$ the
      pointwise bounds in Lemma~\ref{lem:IVest} also imply that $B_2
      \cap M_{\tau_i+1}$ is almost calibrated. Since there are no
      compact almost calibrated Lagrangians in $\mathbb{C}^2$, 
     this implies that $B_1\cap M_{\tau_i+1}$ has either 1 or 2
     connected components. If it has 2 components, then for
     sufficiently large $i$ we can argue as in \cite[Corollary
     4.3]{Neves:survey} to show that in fact the original flow
     $\mathcal{M}(t)$ does not have a singularity at
     $(\mathbf{0},0)$. Therefore for sufficiently large $i$, $B_1\cap
     M_{\tau_i+1}$ is connected. 

     Finally consider the  exactness
     part of Condition \eqref{cond:ast}. The rationality assumption is preserved along the
     flow, see \cite[Section 6]{Neves:zero-maslov}. It follows that
     there is a constant $a > 0$ such that, after rescaling,
     for any loop $\gamma\subset M_{\tau_i+1}$ we have
     \begin{equation}\label{eq:intl10}
       \int_\gamma \lambda \in 2\pi a e^{\tau_i+1} \mathbb{Z}.
     \end{equation}
     Although we might only be able to define a multivalued function
     $\beta$ satisfying $\rd\beta = \lambda$ on $M_\tau$,
     \eqref{eq:intl10} implies that $f = \sin(e^{-\tau_i-1}a^{-1}\beta)$ is
     single valued. Without loss of generality we can assume that
     $f(\mathbf{x}_0)=0$ for a basepoint $\mathbf{x}_0\in B_1\cap M_{\tau_i+1}$. We have
      $$ \nabla f = e^{-\tau_i-1}a^{-1}\cos(e^{-\tau_i-1}a^{-1}\beta) \nabla
        \beta, $$
      so using $|\nabla \beta| = |\mathbf{x}^\perp|$ we have $|\nabla
      f| \leq e^{-\tau_i-1}a^{-1}$ on $B_1$. For large $i$, $B_2\cap
      M_{\tau_i+1}$ is almost calibrated, and so as
      in the proof of \cite[Lemma 7.2]{Neves:zero-maslov}, we have a 
      uniform lower bound $\mathcal{H}^2(\hat{B}_i(\mathbf{x}_1,1)) > K$
      for the intrinsic unit balls in $M_{\tau_i+1}$ centred at any
      $\mathbf{x}_1\in B_1\cap M_{\tau_i+1}$. We also have an upper bound for the
      area of $B_1\cap M_{\tau_i+1}$, using the bound for the area
      ratios. Together with the connectedness this implies that there
      is a uniform constant $C_1$ such that for any $\mathbf{x}_1\in B_1\cap
      M_{\tau_i+1}$, the points $\mathbf{x}_0, \mathbf{x}_1$ can be connected by a curve in
      $B_1\cap M_{\tau_i+1}$ of length at most $C_1$, for large
      $i$. The gradient bound for $f$, together with $f(\mathbf{x}_0)=0$, then
      implies that $|f|\leq C_1 e^{-\tau_i-1}a^{-1}$ on $B_1\cap
      M_{\tau_i+1}$. For sufficiently large $i$ we can then define a
      single-valued function $\beta$ on $B_1\cap M_{\tau_i+1}$
      satisfying $\rd\beta = \lambda$, so $\int_\gamma \lambda = 0$
      follows for any loop $\gamma\subset B_1\cap M_{\tau_i+1}$. 
\end{proof}

 \begin{thm}\label{thm:u1}
   Suppose that $\mathcal{M}(t)$ satisfies the same assumptions as in
   Lemma~\ref{lem:Conditions}. Then all tangent flows at $(\mathbf{0},0)$ are given by $V_0$.
\end{thm}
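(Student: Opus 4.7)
The plan is to upgrade the subsequential statement $D_{V_0}(M_{\tau_i})\to 0$ from Lemma~\ref{lem:Conditions} to the full-sequence statement $D_{V_0}(M_\tau)\to 0$ as $\tau\to\infty$, via a single application of the decay estimate in Proposition~\ref{prop:d30}.

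The first step is to show that $\mathcal{A}_\alpha(M_\tau)\to 0$ as $\tau\to\infty$. The excess $\mathcal{A}(M_\tau)$ is non-increasing by Huisken's monotonicity formula. Along the subsequence $\tau_i$ from Lemma~\ref{lem:Conditions} we have $D_{V_0}(M_{\tau_i})\to 0$, so Proposition~\ref{prop:Aest} gives $|\mathcal{A}(M_{\tau_i+1})|\to 0$. A non-increasing sequence having a subsequence tending to $0$ must itself converge to $0$, and the limit is necessarily approached from above: otherwise monotonicity would trap $\mathcal{A}$ below a fixed negative value, contradicting the subsequential vanishing. Hence $\mathcal{A}(M_\tau)\searrow 0$ with $\mathcal{A}(M_\tau)\geq 0$ for $\tau$ large, and consequently $\mathcal{A}_\alpha(M_\tau)\searrow 0$ too.

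Given $\delta_2>0$, let $\epsilon_2=\epsilon_2(\delta_2)$ be the constant from Proposition~\ref{prop:d30}. Using the above and the growth bound of Proposition~\ref{prop:Dgrowth1}, I would choose $i$ sufficiently large so that $D_{V_0}(M_{\tau_i})<\epsilon_2$, $\mathcal{A}_\alpha(M_{\tau_i})<\epsilon_2$, and Condition~\eqref{cond:ddag} holds on $[\tau_i-1,\infty)$. Consider the time-shifted flow $\tilde M_\tau:=M_{\tau_i+\tau}$, for which $\tilde M_1=M_{\tau_i+1}$ satisfies Condition~\eqref{cond:ast} by Lemma~\ref{lem:Conditions}. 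For any $T>0$, since $\mathcal{A}_\alpha$ is non-increasing and non-negative we have
\[
\mathcal{A}_\alpha(\tilde M_0)-\mathcal{A}_\alpha(\tilde M_T)\leq \mathcal{A}_\alpha(M_{\tau_i})<\epsilon_2, \qquad D_{V_0}(\tilde M_0)=D_{V_0}(M_{\tau_i})<\epsilon_2.
\]
Proposition~\ref{prop:d30} (applied with $V_0\in\mathcal{V}'$, since $V_0$ is the centre of $\mathcal{V}$) then yields $D_{V_0}(M_{\tau_i+T})<\delta_2$ for every $T>0$. As $\delta_2>0$ was arbitrary, $D_{V_0}(M_\tau)\to 0$.

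It remains to deduce uniqueness. Any tangent flow at $(\mathbf{0},0)$ is a Brakke-flow limit of rescalings $\mathcal{D}_{\lambda_k}\mathcal{M}$, equivalently a limit of $s\mapsto M_{\tau_k+s}$ on compact time intervals with $\tau_k\to\infty$. The pointwise bounds in Lemma~\ref{lem:IVest} together with $D_{V_0}(M_\tau)\to 0$ force locally smooth convergence to $V_0$ on every compact subset of $\mathbb{C}^2\setminus\{\mathbf{0}\}$, and Brakke-flow compactness together with Neves's structure theorem~\cite{Neves:zero-maslov} then pin the tangent flow down to the static flow of $V_0$. The main obstacle in this approach is the monotone decay $\mathcal{A}_\alpha(M_\tau)\to 0$ from a single side: this is what makes the one-shot application of Proposition~\ref{prop:d30} possible and avoids having to iterate the decay estimate across infinitely many scales.
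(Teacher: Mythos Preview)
Your proposal is correct and follows essentially the same approach as the paper: set up via Lemma~\ref{lem:Conditions}, feed the monotonicity of $\mathcal{A}_\alpha$ into Proposition~\ref{prop:d30}, and read off $D_{V_0}(M_\tau)\to 0$. The only cosmetic difference is that you invoke Proposition~\ref{prop:Aest} to pin down $\lim_{\tau\to\infty}\mathcal{A}_\alpha(M_\tau)=0$, whereas the paper simply uses that $\mathcal{A}_\alpha$ is monotone with a finite limit, so that $\mathcal{A}_\alpha(M_{\tau_i})-\lim_{\tau\to\infty}\mathcal{A}_\alpha(M_\tau)\to 0$ already gives hypothesis~(1) of Proposition~\ref{prop:d30} without knowing the limit's value.
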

\begin{proof}
Let $M_\tau$ be the rescaled flow around $(\mathbf{0},0)$. Using
  Lemma~\ref{lem:Conditions} we have that Condition \eqref{cond:ddag} holds on a
  time interval of the form $[\tau_0, \infty)$, and in addition we
  have a sequence $\tau_i\to \infty$ such that $D_{V_0}(M_{\tau_i})\to
  0$ and $M_{\tau_i+1}$ satisfies Condition \eqref{cond:ast}.
  
  The uniqueness of the tangent flow then follows directly from
  Proposition~\ref{prop:d30}. Indeed,  from
  the monotonicity of $\mathcal{A}_\alpha$ from Lemma~\ref{lem:Aepschange} we also have
$$ \mathcal{A}_\alpha(M_{\tau_i}) - \lim_{\tau\to\infty}
  \mathcal{A}_\alpha(M_\tau) \to 0 \quad\text{ as } i\to \infty. $$
It follows from Proposition~\ref{prop:d30} that
$$ \lim_{i\to \infty} \sup_{\tau > \tau_i} D_{V_0}(M_\tau) = 0, $$
which implies that $M_\tau \to V_0$ locally smoothly on compact sets
in $\mathbb{C}^2\setminus \{\mathbf{0}\}$ as $\tau\to \infty$. 
\end{proof}

\subsection{Lawlor necks} We now show that tangent flows given by a
special Lagrangian union of two transverse planes can only form if for
sufficiently small times before the singularity the flow looks locally
like the two transverse planes, desingularised by a shrinking Lawlor
neck. Recall that, given a special Lagrangian pair of transverse planes $V_0$ in $\mathbb{C}^2$, up to scale there are two (exact) Lawlor necks  $N_\pm$ asymptotic to these planes (corresponding to $zw=\pm1$ in suitable complex coordinates $(z,w)$ under hyperk\"ahler rotation).  Using ideas of Seidel (cf.~\cite{Seidel99}), one can see that  $N_{\pm}$ are not Hamiltonian isotopic (using compactly supported isotopies), but we shall not use this fact in our result below.  

\begin{thm} \label{thm:Lawlor}
  Under the hypotheses of Theorem~\ref{thm:u1}, for every $\varepsilon>0$ there is $r_0>0$ and a smooth function $r:[-r_0^2,0) \to (0, r_0)$ with $r(t) \to 0$ as $t\to 0$  and points $\mathbf{x}_0(t) \to 0$ such that $\cM(t)\cap \big(B_{r_0}(\mathbf{0})\setminus B_{r(t)}(\mathbf{x}_0(t))\big)$ is a $C^1$-graph over $V_0$ with $C^1$-norm bounded by $\varepsilon$. Furthermore,
  \[ r(t)^{-1}(\mathcal{M}(t) - \mathbf{x}_0(t))\]
  converge locally smoothly on $\mathbb{C}^2$ to a unique choice of
  Lawlor neck (either $N_+$ or $N_-$) asymptotic to $V_0$ 
  at  infinity of maximal neck size such that, outside of $B_1(\mathbf{0})$, $N$ can be written as a $C^1$-graph over $V_0$ with $C^1$-norm bounded by $\varepsilon$.
\end{thm}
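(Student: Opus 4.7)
The plan is to combine the locally smooth convergence of the rescaled flow $M_\tau\to V_0$ established in Theorem~\ref{thm:u1} with a refined blowup analysis at the sub-scale where the neck pinches.

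The graphicality statement comes directly from the tangent flow being $V_0$. Translating Theorem~\ref{thm:u1} to the unrescaled flow $\cM(t)=(-t)^{1/2}M_{-\log(-t)}$ gives that for every $\varepsilon>0$ and every $\delta>0$, once $t<0$ is close enough to $0$, the surface $\cM(t)$ is a $C^1$-graph of norm at most $\varepsilon$ over $V_0$ on $B_{r_0}\setminus B_{\delta\sqrt{-t}}$, for some small fixed $r_0>0$. I would then define $r(t)>0$ as the smallest radius, and $\mathbf{x}_0(t)$ as an appropriate center, such that $\cM(t)\cap(B_{r_0}\setminus B_{r(t)}(\mathbf{x}_0(t)))$ is an $\varepsilon$-graph over $V_0$; equivalently, $r(t)$ is chosen so that $r(t)^{-1}(\cM(t)-\mathbf{x}_0(t))$ has maximal size subject to being $\varepsilon$-graphical over $V_0$ outside $B_1$. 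The above translation then gives $r(t)/\sqrt{-t}\to 0$ and $\mathbf{x}_0(t)\to\bOh$, while $r(t)>0$ follows since $\cM(t)$ is not globally $\varepsilon$-graphical over $V_0$ before the singular time.

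For the existence of a Lawlor neck limit, consider the family $\tilde\cM^t(s):=r(t)^{-1}(\cM(t+r(t)^2s)-\mathbf{x}_0(t))$, which are Lagrangian mean curvature flows on time intervals whose lengths tend to $\infty$ as $t\to 0^-$. At $s=0$ these are $\varepsilon$-graphical over $V_0$ outside $B_1$, maximally so, with uniformly bounded Lagrangian angle and area ratios. Huisken's monotonicity at $(\bOh,0)$ gives that $\int|\bH+\mathbf{x}^\perp/2|^2 e^{-|\mathbf{x}|^2/4}\to 0$ along the rescaled flow, and when localized to the sub-scale $r(t)\ll\sqrt{-t}$ this implies $\int_K|\bH|^2\to 0$ for any compact $K$ along $\tilde\cM^t$. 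Together with smooth compactness (using White's regularity combined with the density bound $\Theta(\bOh,0)=2$ to ensure unit multiplicity), a subsequential smooth limit $N$ of $\tilde\cM^t(0)$ exists and is a minimal, hence special, Lagrangian whose asymptotic cone is $V_0$. The classification of exact special Lagrangians in $\CC^2$ asymptotic to $V_0$ then forces $N$ to be either $V_0$ itself or a Lawlor neck $N_+$ or $N_-$, and the failure of scale-one graphicality rules out $V_0$.

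The main obstacle is uniqueness across all sequences $t_k\to 0^-$. After the scale and position have been normalized, the only remaining freedom is the discrete choice between $N_+$ and $N_-$. I would rule out jumping between them by two complementary ingredients. Firstly, an adaptation of Propositions~\ref{prop:decay10} and~\ref{prop:d30} in the spirit of Edelen~\cite[Theorem~13.1]{Edelen21} shows that across scales between $r(t)$ and $r_0$ the best-fit special Lagrangian pair of planes varies only by a summable amount, so the asymptotic pair of planes of the neck at sub-scale stays within a uniformly small neighborhood of $V_0$ for all small $t$. Secondly, for $\eta$ smaller than the $C^1$-separation between $N_+$ and $N_-$ as normalized Lawlor necks asymptotic to $V_0$, the existence argument applied locally in $t$ shows that $\tilde\cM^t(0)$ is $\eta$-close to one of $N_+$ or $N_-$ for every $t$ in some fixed left-neighborhood of $0$; by continuity of $\cM(t)$ in $t$ and of the optimization defining $r(t)$ and $\mathbf{x}_0(t)$, the resulting label $t\mapsto\pm$ is then locally constant in $t$, and hence constant, giving uniqueness of the Lawlor neck limit.
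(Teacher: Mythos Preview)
Your overall strategy---graphicality from the unique tangent flow, then a sub-scale blowup at the optimal radius $r(t)$, then continuity in $t$ for the $\pm$ label---matches the paper's, and your uniqueness argument at the end is essentially the paper's. The gap is in the middle step, where you claim the blowup limit is minimal.

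Your assertion that Huisken's monotonicity at $(\bOh,0)$, ``localized to the sub-scale $r(t)\ll\sqrt{-t}$'', forces $\int_K|\bH|^2\to 0$ along $\tilde\cM^t$ is not justified. The monotonicity formula controls $\int|\bH+\mathbf{x}^\perp/2(t_0-t)|^2\rho_{\bOh,0}$, which is weighted at scale $\sqrt{-t}$; it gives no pointwise or localized control at the much smaller scale $r(t)$. In particular it does not force the mean curvature of a single rescaled time slice $\tilde\cM^t(0)$ to vanish in the limit. Relatedly, your appeal to White regularity for smooth compactness needs the Gaussian density of the limit flow to be strictly below $2$ everywhere; the bound $\Theta(\bOh,0)=2$ for the original flow only gives entropy $\le 2$ for the blowup, which is not enough, and you offer no mechanism to exclude a density-$2$ point.

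The paper handles both issues simultaneously and differently. It passes to a subsequential limit $\widehat\cM$ as an \emph{ancient unit-regular Brakke flow} (not a single time slice), uses Proposition~\ref{prop:d30} to keep the nearby planes $V_\lambda$ in $\mathcal{V}'$ at every intermediate scale, and shows (via a Neves--Tian type argument) that the tangent flow at $-\infty$ of $\widehat\cM$ is some $V'\in\mathcal{V}'$. If $\widehat\cM$ had a density-$2$ point it would be a translated static copy of $V'$, contradicting the extremality in your choice of $r(t)$, $\mathbf{x}_0(t)$; hence $\widehat\cM$ is smooth. The identification of $\widehat\cM$ as a \emph{static} Lawlor neck then comes not from a direct $\int|\bH|^2\to 0$ argument, but from the classification of smooth ancient exact almost-calibrated Lagrangian mean curvature flows asymptotic to a pair of planes, namely \cite[Theorem~1.1]{LambertLotaySchulze}. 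This classification is the key missing ingredient in your proposal.
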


To prove this result, we suppose $\cM$ satisfies the hypotheses of
Theorem~\ref{thm:u1} and consider basepoints $X =
(\bx,t)\in\CC^2\times [-1,0]$ close to $(\mathbf{0}, 0)$ at which to
discuss closeness of $\cM$ to some $V\in \mathcal{V}$.  

\begin{definition} 
Let $\varepsilon_0\in(0,1)$, $\varepsilon\in (0,\varepsilon_0)$ and
$V\in \mathcal{V}$. We say that $\cM$ is \emph{$\varepsilon$-close} to
$V$ at  $X=(\bx,t)$ if the flow $\cM'=\cM - X$ is a Lagrangian $C^1$-graph with $C^1$-norm bounded by $\varepsilon$ over $V$ on $(B_{\varepsilon^{-1}}(\mathbf{0})\setminus \bar{B}_\varepsilon(\mathbf{0}))\times [-\varepsilon^{-2}, - \varepsilon^{2}]$.  We assume  $\varepsilon_0$ is chosen sufficiently small (depending on $\mathcal{V}$) such that $\cM'\cap ((B_{\varepsilon^{-1}}(\bOh)\setminus \bar{B}_\varepsilon(\bOh))\times [-\varepsilon^{-2}, - \varepsilon^{2}])$ is the union of two disjoint embedded annuli.
\end{definition}

 \begin{remark} \label{rem:pseudolocality} We will assume further that $\eps_0>0$ is sufficiently small such that pseudolocality \cite{IlmanenNevesSchulze} implies that for every $\delta>0$ there is $C_{\delta}\gg 1$ and $0<\eps\leq \eps_0$ such that if $\cM(t_0)\cap B_{C_\delta}(\bx)$ is a $C^1$-graph over a 2-plane $P$ with $C^1$-norm  bounded by $\varepsilon$, then $\cM(t) \cap B_{1}(\bx)$ is a $C^1$-graph over $P$ with $C^1$-norm  bounded by $\delta$ for $t \in [t_0, t_0+1]\cap [-1,0)$.\end{remark}

Next we identify the range of scales at which the flow is close to
some $V\in \mathcal{V}$.  We fix a small $\varepsilon\in(0,\epsilon_0)$.

\begin{definition} 
Suppose that $\cM$ is $\varepsilon$-close at $X$ to some $V\in \mathcal{V}$. 
We define $\lambda_\text{min}(X), \lambda_\text{max}(X)$ to be the endpoints of the maximal interval $$1 \in (\lambda_\text{min}(X), \lambda_\text{max}(X)) \subseteq (0, \infty)$$ such that for all $\lambda \in (\lambda_\text{min}(X), \lambda_\text{max}(X))$ we have that $\cD_{\lambda^{-1}}(\cM-X)$ is $\varepsilon$-close at $(\mathbf{0},0)$ to $V_\lambda$ for some $V_\lambda \in \mathcal{V}$.

Note that $\lambda_\text{min}(X), \lambda_\text{max}(X)$ are continuous in the base-point $X$. 
\end{definition}

\begin{remark} Since we can assume that all tangent flows of $\cM$ at $(\bOh,0)$ are $\cM_{V_0}$, for any sequence $\lambda_i \searrow 0$ we have 
\begin{equation}\label{eq:Mi.transverse}
\cM_i := \cD_{\lambda_i^{-1}} \cM \rightharpoonup \cM_{V_0}\, .
\end{equation}
Note that along the sequence $\cM_i$ this implies that for all points $X=(\bx,t)$ sufficiently close to $\{\mathbf{0}\} \times (-\infty, 0)$ we have that $\lambda_\text{min}(X) \to 0$ and $\lambda_\text{max}(X) \to \infty$. 
\end{remark}

We now wish to rule out the possibility that $\lambda_{\text{min}}(X)=0$.

\begin{lemma} 
\label{lem:zero-transversality}
Assume that for some $X = (\bx,t)$ with $t<0$ we have along $\cM_i$ in \eqref{eq:Mi.transverse} that $\lambda_\text{min}(X) = 0$. Then, locally around $X$, the flow $\cM$ is the smooth flow of two immersed planes.
\end{lemma}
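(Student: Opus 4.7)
The plan is to apply the uniqueness of tangent flows, Theorem~\ref{thm:u1}, at the point $X$ itself, and then to appeal to White's regularity theorem~\cite{White.regularity} separately on each of the two transverse sheets.

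First I would observe that the hypothesis $\lambda_{\min}(X)=0$ means that for every $\lambda\in(0,\lambda_{\max}(X))$ the rescaled flow $\cD_{\lambda^{-1}}(\cM_i-X)$ is $\varepsilon$-close at $(\bOh,0)$ to some $V_\lambda\in\mathcal{V}$. Taking a sequence $\lambda_k\to 0$ and using compactness of $\overline{\mathcal{V}}$, the $V_{\lambda_k}$ subconverge to some $V_X\in\overline{\mathcal{V}}$, and the rescaled flows subconverge to the static flow of $V_X$ on compact subsets of $\CC^2\setminus\{\bOh\}\times\RR$. Hence every tangent flow of $\cM_i-X$ at the origin is the static flow of a transverse pair of multiplicity-one planes. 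Since the translated and rescaled flow $\cM_i-X$ still satisfies the assumptions of Lemma~\ref{lem:Conditions} (rationality, zero Maslov, and the bounds on area ratios and Lagrangian angle are all preserved under translation and parabolic rescaling), Theorem~\ref{thm:u1} applies at $X$ and yields a unique tangent flow $V_X$. In particular $V_\lambda\to V_X$ as $\lambda\to 0$, so the flow is $2\varepsilon$-close to the same fixed $V_X$ at every sufficiently small scale.

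Next I would assemble the annular closeness regions at all sufficiently small scales $\lambda$ to cover a deleted parabolic neighborhood of $X$ in the spacetime of $\cM_i$. By the transversality built into Definition~\ref{dfn:V} (via the constant $c_2$), on this deleted neighborhood the flow decomposes into two disjoint smoothly embedded connected components, each a small Lagrangian graph over one of the two planes of $V_X$. Each such component is itself a smooth Lagrangian mean curvature flow whose unique tangent flow at $X$ is a single multiplicity-one plane. White's regularity theorem then implies that each component extends smoothly through $X$. Consequently, on a full (undeleted) parabolic neighborhood of $X$, the flow $\cM_i$ is the smooth mean curvature flow of a Lagrangian consisting of two smoothly embedded sheets meeting transversely at $X$; undoing the rescaling $\cD_{\lambda_i^{-1}}$ transfers the conclusion to $\cM$.

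The main obstacle will be the isolation of the two sheets into separate density-one Brakke flows so that White's regularity theorem applies individually. This step uses two ingredients: the $c_2$-transversality of pairs of planes in $\mathcal{V}$, which places the two sheets in disjoint tubular neighborhoods of the two planes of $V_X$, and the $2\varepsilon$-closeness of the flow to the \emph{fixed} $V_X$ at every small scale, which propagates this separation all the way down to $X$ and rules out any possible exchange between the two sheets as $\lambda\to 0$.
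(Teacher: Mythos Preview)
There is a genuine gap. Theorem~\ref{thm:u1} cannot be invoked at $X$: its hypotheses (via Lemma~\ref{lem:Conditions}) explicitly require that the flow develop a singularity at the base point, and this is what produces the connectedness part of Condition~\eqref{cond:ast}. But $X=(\bx,t)$ has $t<0$, strictly before the first singular time of $\cM$, so $\cM_i$ is a smooth immersed flow at $X$. In fact, in the situation you are trying to establish (two disjoint graphical sheets near $X$), the connectedness in Condition~\eqref{cond:ast} \emph{fails}, so the machinery behind Theorem~\ref{thm:u1} does not run. This part is not fatal to your strategy, since every $V_\lambda$ already lies in $\mathcal{V}$ and hence is $c_1$-close to the fixed $V_0$; uniqueness of the tangent flow is not actually needed to keep the two sheets separated across scales.

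The more serious gap is the claim that the annular closeness regions ``cover a deleted parabolic neighborhood of $X$''. At scale $\lambda$ the $\varepsilon$-closeness only controls the flow on the time interval $[t-\lambda^2\varepsilon^{-2},\,t-\lambda^2\varepsilon^2]$, which is strictly earlier than $t$. The union over $\lambda$ is a backwards parabolic cone and never touches the time-$t$ slice, so you obtain no direct information about $\cM_i(t)$ near $\bx$; in particular you cannot isolate two sheets at time $t$ on which to run White's theorem. The paper bridges this with pseudolocality (Remark~\ref{rem:pseudolocality}): the graphicality at time $t-\lambda^2\varepsilon^2$ on $B_{\lambda\varepsilon^{-1}}(\bx)\setminus B_{\lambda\varepsilon}(\bx)$ is pushed forward to time $t$ on the slightly smaller annulus $B_{\lambda(\varepsilon^{-1}-C_\delta)}(\bx)\setminus B_{\lambda(C_\delta+\varepsilon)}(\bx)$. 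Taking the union over $\lambda\in(0,1]$ then gives two disjoint $C^1$-graphs over $P_1,P_2$ on a full punctured ball in $\cM_i(t)$; smoothness of $\cM_i$ for $t<0$ does the rest, and no separate appeal to White is needed.
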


\begin{proof}
We first note that Remark \ref{rem:pseudolocality} implies that we have smooth control on the flow forward in time in $B_{\lambda (\varepsilon^{-1}-C_\delta)}(\bx)\setminus B_{\lambda (C_\delta+\varepsilon)}(\bx)$ up to time $t$ as one goes down with the scale $\lambda$ from $1$ down to $\lambda_\text{min}$, i.e.~in  $B_{(\varepsilon^{-1}-C_\delta)}(\bx)\setminus B_{\lambda_\text{min} (C_\delta+\varepsilon)}(\bx)$. So if $\lambda_\text{min}(X) = 0$  we have that $\cM_i(t)$ is locally around $\bx$ the union of two smooth embedded flows, where each is a small $C^1$-graph over the Lagrangian planes $P_1$ and $P_2$ respectively (for $V_0 = P_1\cup P_2$). Since the flow $\cM_i$ is smooth, this has to be true forwards in time until $t=0$, and thus there is no singularity at $(\mathbf{0},0)$ for $\cM_i$  and thus for $\cM$.
\end{proof}

Given Lemma~\ref{lem:zero-transversality}, we can thus always assume that $\lambda_\text{min}(X) > 0$. We now argue that there is more or less a `unique' point  where $\lambda_\text{min}$ is minimised.

\begin{lemma} 
\label{lem:uniq.lambda.min}
For $\cM_i$ as in \eqref{eq:Mi.transverse}  consider points $X_i(t)= (\bx_i(t),t)$ which minimise $\lambda_\text{min}$ relative to other points $X=(\bx,t)$.  Then 
$$\lambda_\text{min}(\bx,t) > \lambda_\text{min}(X_i(t))>0$$ 
for $(\bx,t)  \in \big(B_{\lambda_\text{max}(X_i)(\varepsilon^{-1} - C)}(\bx_i(t)) \setminus B_{\lambda_\text{min}(X_i)(C+\varepsilon)}(\bx_i(t))\big) \times \{t\}$, where $C=C_\delta$ for a suitable $\delta>0$ in Remark \ref{rem:pseudolocality}.

\end{lemma}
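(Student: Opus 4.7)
The overall strategy is to show that if $\mathbf{x}$ is sufficiently far from the ``neck'' point $\mathbf{x}_i(t)$, then at any scale smaller than $\lambda_\text{min}(X_i(t))$ the neighbourhood of $\mathbf{x}$ only sees one of the two planes of $V_{X_i}=P_1\cup P_2$, and hence cannot be $\varepsilon$-close to any union of two transverse planes in $\mathcal{V}$. The positivity $\lambda_\text{min}(X_i(t))>0$ is immediate from Lemma~\ref{lem:zero-transversality}, given our standing assumption that $\cM$ develops a genuine singularity at $(\bOh,0)$.

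Combining the $\varepsilon$-closeness of the rescaled flow at every scale $\lambda \in (\lambda_\text{min}(X_i),\lambda_\text{max}(X_i))$ centered at $X_i(t)$ with pseudolocality (Remark~\ref{rem:pseudolocality}), one obtains the stronger statement that $\cM$ is a $\delta$-small $C^1$-graph over $V_{X_i}$ (as two disjoint embedded sheets) on the spatial annulus $B_{\lambda_\text{max}(X_i)\varepsilon^{-1}}(\mathbf{x}_i(t))\setminus B_{\lambda_\text{min}(X_i)\varepsilon}(\mathbf{x}_i(t))$, over the appropriate parabolic time slices. Here $\delta$ is the parameter and $C=C_\delta$ the associated pseudolocality constant from Remark~\ref{rem:pseudolocality}.

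Now fix $\mathbf{x}$ in the specified annulus and set $r:=|\mathbf{x}-\mathbf{x}_i(t)|$, so that $\lambda_\text{min}(X_i)(C+\varepsilon)<r<\lambda_\text{max}(X_i)(\varepsilon^{-1}-C)$. For $\lambda_\text{min}(\mathbf{x},t)$ to be finite at all, $\mathbf{x}$ must lie within $O(\delta)$ of $V_{X_i}$, and thus close to one of the two planes, say $P_1$. Since the angle between $P_1$ and $P_2$ is bounded below by $c_2>0$ (Definition~\ref{dfn:V}), we get $\operatorname{dist}(\mathbf{x},P_2)\geq c_2 r - O(\delta r)\geq c_2 r/2$ for $\delta$ sufficiently small. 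Consequently, whenever $\lambda$ satisfies both $\lambda<c_2 r\varepsilon/2$ and $\lambda< C\varepsilon\lambda_\text{min}(X_i)$, the ball $B_{\lambda\varepsilon^{-1}}(\mathbf{x})$ is disjoint from $P_2$ and, by the upper and lower bounds on $r$, is contained in the good region around $\mathbf{x}_i(t)$. On this ball $\cM$ is therefore a single $\delta$-small $C^1$-graph over $P_1$, and a single sheet cannot be $\varepsilon$-close at scale $\lambda$ centered at $\mathbf{x}$ to any $V'\in\mathcal{V}$, since each element of $\mathcal{V}$ splits on $B_{\varepsilon^{-1}}\setminus B_\varepsilon$ into two disjoint embedded annuli meeting at angle bounded below by $c_2$.

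This yields the lower bound $\lambda_\text{min}(\mathbf{x},t)\geq \min\bigl(c_2 r\varepsilon/2,\; C\varepsilon\lambda_\text{min}(X_i)\bigr)$, and combined with $r>\lambda_\text{min}(X_i)(C+\varepsilon)$ both quantities exceed $\lambda_\text{min}(X_i)$ once $C=C_\delta$ is chosen large enough (that is, $\delta$ small enough) that $c_2\varepsilon(C+\varepsilon)/2>1$ and $C\varepsilon>1$, which determines the meaning of ``suitable $\delta$'' in the statement. The main subtlety lies in the joint choice of $\varepsilon$ and $\delta$: $\varepsilon$ is fixed small so that every $V\in\mathcal{V}$ has genuinely disjoint sheets on its $\varepsilon$-annulus, after which $\delta\ll\varepsilon$ is taken so that the pseudolocality constant $C_\delta$ is large enough for both inequalities above to hold simultaneously.
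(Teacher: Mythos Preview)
Your argument is correct and follows the same route as the paper's proof, which is a terse one-sentence appeal to the pseudolocality argument already spelled out in Lemma~\ref{lem:zero-transversality}; you have essentially supplied the details the paper omits. The core mechanism is identical in both: the $\varepsilon$-closeness at $X_i(t)$ over the full range of scales $\lambda\in(\lambda_{\min}(X_i),\lambda_{\max}(X_i))$ together with Remark~\ref{rem:pseudolocality} forces $\cM$ to be two disjoint graphical sheets over $V_{X_i}$ on the stated annulus, and then any point $\mathbf{x}$ in that annulus sees only one sheet at scales $\lambda\lesssim\lambda_{\min}(X_i)$, which is incompatible with $\varepsilon$-closeness to any member of $\mathcal{V}$.

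One small point worth tightening: your asserted graphicality region $B_{\lambda_{\max}\varepsilon^{-1}}(\mathbf{x}_i)\setminus B_{\lambda_{\min}\varepsilon}(\mathbf{x}_i)$ is the \emph{un}-shrunk annulus coming directly from the definition of $\varepsilon$-closeness (at the backward parabolic times), whereas the paper works with the shrunk annulus $B_{\lambda(\varepsilon^{-1}-C)}\setminus B_{\lambda(C+\varepsilon)}$ at time $t$ after applying pseudolocality. Your containment check $\lambda<C\varepsilon\lambda_{\min}(X_i)$ for $B_{\lambda\varepsilon^{-1}}(\mathbf{x})$ to sit inside the good region is consistent with the un-shrunk annulus and the constraint $r>\lambda_{\min}(X_i)(C+\varepsilon)$, so the bookkeeping closes, but it would be cleaner to state explicitly which time slices you are working on so that the two annuli are not conflated.
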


\begin{proof}
 This follows by a similar argument to the proof of Lemma~\ref{lem:zero-transversality} since Remark \ref{rem:pseudolocality} implies that we have smooth control on the flow forward in time in $B_{\lambda (\varepsilon^{-1}-C)}(\bx)\setminus B_{\lambda (C+\varepsilon)}(\bx)$ up to time $t$ as one goes down with the scale $\lambda$ from $\lambda_\text{max}$ down to $\lambda_\text{min}$.
\end{proof}

\begin{lemma}\label{lem:down-the-scales}
 There is $0<\varepsilon_1\leq \varepsilon_0$ such that for $0<\varepsilon\leq \varepsilon_1$ the following holds. If there is $\lambda_0 \in (\lambda_\text{min}(X),\lambda_\text{max}(X))$ such that we can choose $V_{\lambda_0} = V_0$, then $V_{\lambda} \in \mathcal{V}'$ for all $\lambda \in  (\lambda_\text{min}(X), \lambda_0]$.
\end{lemma}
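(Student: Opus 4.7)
The plan is to interpret going to smaller scales as going forward in rescaled time and then run a continuity/bootstrap argument based on Proposition~\ref{prop:d30}. Let $M_\sigma$ denote the rescaled flow of $\cM$ about the basepoint $X$, so that $\cD_{\lambda^{-1}}(\cM-X)$ at spacetime origin corresponds to $M_\sigma$ at $\sigma=-2\log\lambda$; set $\sigma_0=-2\log\lambda_0$ and $\sigma_{\max}=-2\log\lambda_{\min}(X)$, making the target interval $[\sigma_0,\sigma_{\max})$. First I would verify, by direct integration using the $C^1$-bound on the graphical annulus $B_{\varepsilon^{-1}}\setminus B_\varepsilon$ and crude control on the small ball and exterior via area-ratio and Lagrangian-angle bounds, that the hypothesis $V_{\lambda_0}=V_0$ combined with $\varepsilon$-closeness gives $D_{V_0}(M_{\sigma_0})\le C_*\varepsilon$ and $|\mathcal{A}_\alpha(M_{\sigma_0})|\le (C_*\varepsilon)^\alpha$.

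Now choose $\delta_2>0$ small enough that whenever $D_{V_0}(M_\sigma)<\delta_2$ and $M_\sigma$ is $\varepsilon$-close to some $V_\lambda\in\mathcal{V}$, the graphicality of $M_\sigma$ over both $V_0$ and $V_\lambda$ on the overlap annulus $B_2\setminus B_1$ forces $d(V_0,V_\lambda)\le C(\delta_2+\varepsilon)<c_1/2$, so $V_\lambda\in\mathcal{V}'$. Define
\[
\sigma^*=\sup\{\sigma\in[\sigma_0,\sigma_{\max}) : D_{V_0}(M_{\sigma'})<\delta_2 \text{ for all }\sigma'\in[\sigma_0,\sigma]\},
\]
and suppose toward contradiction $\sigma^*<\sigma_{\max}$. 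On $[\sigma_0+1,\sigma^*]$ Proposition~\ref{prop:Aest} gives $|\mathcal{A}_\alpha(M_\sigma)|\le\delta_2^{\alpha(1+\alpha_1)}$, and combined with $|\mathcal{A}_\alpha(M_{\sigma_0})|\le (C_*\varepsilon)^\alpha$ and monotonicity of $\mathcal{A}_\alpha$, the excess drop from $\sigma_0$ to $\sigma^*$ is at most $(C_*\varepsilon)^\alpha + \delta_2^{\alpha(1+\alpha_1)}$, which is below the threshold $\varepsilon_2(\delta_2/2)$ of Proposition~\ref{prop:d30} for $\varepsilon,\delta_2$ small. Applying Proposition~\ref{prop:d30} with target $\delta_2/2$ then yields $D_{V_0}(M_{\sigma^*})<\delta_2/2$, and by continuity the strict inequality $D_{V_0}<\delta_2$ extends past $\sigma^*$, contradicting maximality. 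Hence $\sigma^*=\sigma_{\max}$ and the choice of $\delta_2$ gives $V_\lambda\in\mathcal{V}'$ for all $\lambda\in(\lambda_{\min}(X),\lambda_0]$.

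The hard part will be verifying Conditions~\eqref{cond:ddag} and~\eqref{cond:ast} of Proposition~\ref{prop:d30} on the full shifted interval. Condition~\eqref{cond:ast} at the initial rescaled time follows, as in the proof of Lemma~\ref{lem:Conditions}, from rationality of $\cM$ together with the almost-calibrated graphicality of $M_{\sigma_0+1}$ over $V_0$ on $B_1$ (itself a consequence of the smallness of $D_{V_0}(M_{\sigma_0})$ via Proposition~\ref{prop:Dgrowth1}), and is then propagated forward by Lemma~\ref{lem:condast}. Condition~\eqref{cond:ddag} is the more delicate requirement: its $c_0$-bound must hold on the extended interval $[\sigma_0-1,\sigma^*+10]$, while the pre-interval $[\sigma_0-1,\sigma_0]$ is not directly controlled by our hypothesis at $\sigma_0$. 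The remedy I expect to employ is to iterate Proposition~\ref{prop:d30} over successive subwindows of bounded length, using the bootstrap through Proposition~\ref{prop:Aest} to keep each subwindow's $\mathcal{A}$-drop small and refreshing the initial $D_{V_0}$-smallness at the start of each subwindow, so that the telescoping aggregate drop stays below $c_0$ throughout.
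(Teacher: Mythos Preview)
Your overall strategy---translate scales to rescaled time and invoke Proposition~\ref{prop:d30}---matches the paper's, but you are missing one observation that makes the argument short and eliminates the bootstrap entirely. By the very \emph{definition} of $\lambda_{\min}(X),\lambda_{\max}(X)$, the flow $\cD_{\lambda^{-1}}(\cM-X)$ is $\varepsilon$-close to \emph{some} $V_\lambda\in\mathcal{V}$ for \emph{every} $\lambda\in(\lambda_{\min}(X),\lambda_{\max}(X))$, not only at $\lambda_0$. In rescaled-time language this means that at \emph{every} $\sigma\in[\sigma_0,\sigma_{\max})$ the surface $M_\sigma$ is $\varepsilon$-graphical over a special Lagrangian pair of planes on $B_{\varepsilon^{-1}}\setminus B_\varepsilon$ (and in fact on a time window of width $\sim|\log\varepsilon|$ around $\sigma$). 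Your ``direct integration'' estimate $|\mathcal{A}(M_{\sigma})|\le C_*\varepsilon$ therefore holds at \emph{every} such $\sigma$, not just at $\sigma_0$. Consequently Condition~\eqref{cond:ddag} and hypothesis~(1) of Proposition~\ref{prop:d30} are satisfied on the whole interval simply by taking $\varepsilon_1$ small enough, with no bootstrap and no iteration. This is exactly how the paper proceeds: it fixes $\delta_2$ so that $D_{V_0}<\delta_2$ together with $\varepsilon$-closeness to $V_\lambda$ forces $V_\lambda\in\mathcal{V}'$, then chooses $\varepsilon_1$ so small that the $\varepsilon$-closeness at both endpoints (hence the bound $|\mathcal{A}_\alpha|\le(C_*\varepsilon)^\alpha$ at each) yields condition~(1), and applies Proposition~\ref{prop:d30} for every intermediate $T$.

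Without that observation, your bootstrap has a genuine gap. You need
\[
(C_*\varepsilon)^\alpha+\delta_2^{\alpha(1+\alpha_1)}<\epsilon_2(\delta_2/2),
\]
but Proposition~\ref{prop:d30} gives no quantitative rate for $\epsilon_2(\cdot)$; a priori $\epsilon_2(\delta)$ could decay faster than $\delta^{\alpha(1+\alpha_1)}$, and then no choice of $\delta_2$ closes the loop. The fix is precisely the observation above: bound $|\mathcal{A}_\alpha(M_{\sigma^*})|$ directly by $(C_*\varepsilon)^\alpha$ using the assumed $\varepsilon$-closeness at the scale corresponding to $\sigma^*$, removing the $\delta_2$-dependence from the right-hand side. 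Once you do this, the worries in your last paragraph about Condition~\eqref{cond:ddag} on the pre-interval and the proposed telescoping iteration become unnecessary as well, since the $\varepsilon$-closeness at scale $\lambda_0$ already controls a rescaled-time window of width $\sim|\log\varepsilon|$ around $\sigma_0$.
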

\begin{proof} We consider the rescaled flow $(\hat M_\tau)_{\tau\geq
    0}$ for $\hat\cM:=\cD_{\lambda_0^{-1}}(\cM - X)$ and choose
  $\delta_2>0$ and $\varepsilon_0 >0$ small such that $\hat
  M_{-2\log(\lambda)+2\log(\lambda_0(X))}$ for $\lambda \in
  (\lambda_\text{min}(X), \lambda_0]$ being $\varepsilon$-close to $V_\lambda$ (for $0<\varepsilon \leq \varepsilon_0$) and $D_{V_0}(\hat M_{-2\log(\lambda)+2\log(\lambda_0(X))}) \leq \delta_2$ implies that $V_\lambda \in \mathcal{V}'$. This fixes $\epsilon_2>0$ in Proposition~\ref{prop:d30}. We can then choose $\varepsilon_1>0$ sufficiently small such that 
$$\hat M_0, \hat M_{-2\log(\lambda_\text{min}(X)) -2\log(\lambda_0)}$$ 
being $\varepsilon$-close to $V_0$ and $V_{\lambda_\text{min}(X)}$  respectively implies that condition (1) in Proposition~\ref{prop:d30} is met. Applying Proposition~\ref{prop:d30} yields the statement.
\end{proof}

\subsection{Finding Lawlor necks}  We consider $0<\varepsilon\leq \varepsilon_1$, where $\varepsilon_1$ as in Lemma \ref{lem:down-the-scales}. Consider a sequence  $\lambda_i \searrow 0$ and let $\cM_i$ be as in \eqref{eq:Mi.transverse}.  We fix $t<0$ and let $X_i(t)$ be as in Lemma \ref{lem:uniq.lambda.min}.   
We consider the flows
\begin{equation}\label{eq:Mitprime.transverse} \cM'_{i,t} : = \cD_{\lambda^{-1}_\text{min}(X_i(t))}(\cM_i - X_i(t))\ . 
\end{equation}
We can assume that $ \cM'_{i,t} \rightharpoonup \widehat{\cM}$, where
$\widehat{\cM}$ is an ancient unit-regular Brakke flow such that
$\mathcal{D}_{\lambda^-1} \widehat{\cM}$ is $\varepsilon$-close to
some $V_\lambda\in \mathcal{V}$ for $\lambda \in  [1,\infty)$, but not for a sequence  $\lambda_i \nearrow 1$. Furthermore, $\widehat{\cM}$ is (locally) the limit of smooth, exact, almost calibrated Lagrangian mean curvature flows with uniformly bounded Lagrangian angle and uniformly bounded area ratios.

\begin{lemma}\label{lem:finding-Lawlor-necks}
The flow $\widehat{\cM}$ is a static Lawlor neck $N$ asymptotic to  $V' \in \mathcal{V}'$, where $V'$ is $\varepsilon$-close to $V_0$, where the centre and the scale of $N$ are such that there is no point $X=(\bx,t)$ with $\lambda_\text{min}(X) < 1$.  
\end{lemma}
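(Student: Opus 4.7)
The approach is to extract $\widehat{\cM}$ as an ancient limit via Brakke compactness, identify its tangent flow at infinity as $V'\in\mathcal{V}'$, prove $\widehat{\cM}$ is static, and conclude by classification that $\widehat{\cM}$ is a Lawlor neck. First, the sequence $\cM'_{i,t}$ consists of smooth, exact, almost calibrated, zero Maslov Lagrangian mean curvature flows with uniformly bounded area ratios and Lagrangian angle, so standard Brakke compactness (together with White's regularity at scales where $\varepsilon$-closeness to a pair of planes holds) produces $\widehat{\cM}$ as a unit regular ancient Brakke flow, smooth wherever it is $\varepsilon$-close to a pair of planes. The property that $\cD_{\lambda^{-1}}\widehat{\cM}$ is $\varepsilon$-close to some $V_\lambda\in\mathcal{V}$ for $\lambda\in[1,\infty)$ is given; applying the decay estimate underlying Proposition~\ref{prop:d30} at arbitrarily large scales (which is allowed since $\widehat{\cM}$ is eternal at scales $\geq 1$), the $V_\lambda$ converge to a single pair of planes $V'$ as $\lambda\to\infty$. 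By Lemma~\ref{lem:down-the-scales} and the choice $\varepsilon\leq\varepsilon_1$, this $V'$ lies in $\mathcal{V}'$ and is $\varepsilon$-close to $V_0$.

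The crux is showing $\widehat{\cM}$ is static, i.e.\ $\mathbf{H}\equiv 0$. Along the rescaled flow $M_\tau^i$ of $\cM_i$ centred at $(\bOh,0)$, $\theta$ satisfies the heat equation, so after normalising $\theta_{V_0}=0$, the monotonicity formula gives $\int_{\tau_1}^{\tau_2}\int |\mathbf{H}|^2 e^{-|\bx|^2/4}\,d\tau=\tfrac{1}{2}\bigl(\int \theta^2 e^{-|\bx|^2/4}\bigr)\big|_{\tau_1}^{\tau_2}$, and the right-hand side tends to $0$ as $i\to\infty$ since $\cM_i\rightharpoonup\cM_{V_0}$ and $\int\theta^2 e^{-|\bx|^2/4}$ on $M_\tau^i$ approaches its value on $V_0$, namely $0$. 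Transferring this to the further rescaling $\cM'_{i,t}$ using the parabolic scaling of $|\mathbf{H}|^2\,dA\,dt$, the space-time $|\mathbf{H}|^2$-integral on any fixed parabolic region in the new frame corresponds to the integral on a small region of $\cM$ near $(\bOh,0)$; Lemma~\ref{lem:intHbound} then bounds this by $CR^2 D_V^\alpha$, which vanishes as $i\to\infty$ because $D_V\to 0$ along the sequence. Passing to the limit gives $\mathbf{H}\equiv 0$ on the smooth part of $\widehat{\cM}$, and combined with smoothness everywhere this yields staticity.

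Finally, $\widehat{\cM}$ is a smooth, exact, almost calibrated, special Lagrangian surface in $\mathbb{C}^2$ asymptotic to the pair of transverse special Lagrangian planes $V'$; the classification of such surfaces (of Lotay--Neves type) forces $\widehat{\cM}=N$ to be one of the two Lawlor necks $N_\pm$ asymptotic to $V'$. The minimality of scale is inherited from the construction: $X_i(t)$ was chosen to minimise $\lambda_\text{min}$ at time $t$, so in the new frame $\lambda_\text{min}((\bOh,0))=1$ is the minimum at time $0$; by staticity of $\widehat{\cM}$ the quantity $\lambda_\text{min}$ is time-independent, and hence no point on $\widehat{\cM}$ has $\lambda_\text{min}<1$. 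The main obstacle is the staticity step: one must verify that the $|\mathbf{H}|^2$-integral estimate transfers cleanly under the combined rescaling $\cM\to\cM_i\to\cM'_{i,t}$, that the centres of rescaling stay away from the singular point of $\cM$ by the appropriate amount relative to the scale, and that the hypotheses of Lemma~\ref{lem:intHbound} apply uniformly along the sequence.
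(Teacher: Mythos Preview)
Your route differs from the paper's in the central step. The paper never argues that $\mathbf{H}\equiv 0$ directly. Instead, after identifying the tangent flow at $-\infty$ as some $V'\in\mathcal{V}'$ (via an extension of Neves--Tian to Brakke limits of smooth Lagrangian flows), it runs a Gaussian density dichotomy: since $\widehat{\cM}$ has entropy at most $2$, either some point has density exactly $2$, in which case unit regularity forces $\widehat{\cM}$ to equal a translate of the static flow $\cM_{V'}$ --- but then for large $i$ there would be points $\tilde{X}_i$ near $X_i(t)$ with $\lambda_{\text{min}}(\tilde{X}_i)<\lambda_{\text{min}}(X_i(t))$, contradicting the minimality of $X_i(t)$ --- or else all densities are strictly below $2$, whence White's regularity theorem upgrades $\cM'_{i,t}\rightharpoonup\widehat{\cM}$ to smooth convergence. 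Thus $\widehat{\cM}$ is a smooth, exact, almost calibrated ancient Lagrangian mean curvature flow, and the classification in \cite{LambertLotaySchulze} forces it to be either a union of planes or a static Lawlor neck; the former is excluded because $\widehat{\cM}$ fails to be $\varepsilon$-close to planes at scale $1$.

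Your argument has a genuine gap at the point where you write ``combined with smoothness everywhere this yields staticity.'' You have not established smoothness of $\widehat{\cM}$. Your use of Lemma~\ref{lem:intHbound} and scale-invariance of $\int|\mathbf{H}|^2\,dA\,dt$ correctly shows that the space-time $|\mathbf{H}|^2$-integral on any fixed parabolic region of $\cM'_{i,t}$ tends to zero, so the limit Brakke flow has generalized mean curvature zero. But a stationary integral varifold asymptotic to $V'$ need not be smooth without further input: you still need to rule out points of density $\geq 2$, and for that you are forced back to exactly the density dichotomy the paper uses (density $2$ forces a translated cone $V'$, contradicting $\lambda_{\text{min}}=1$; density $<2$ plus Allard gives smoothness). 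Once you invoke that dichotomy, your $\mathbf{H}=0$ computation becomes unnecessary, since smoothness of $\widehat{\cM}$ as an ancient flow feeds directly into \cite{LambertLotaySchulze}, which already yields staticity as part of the classification.
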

\begin{proof}
We first note that $\widehat{\cM}$ has entropy bounded by
two. Furthermore, from the argument in the proof of Lemma
\ref{lem:zero-transversality} we see that outside of
$(B_{C_{\delta}+1}(\bOh) \times (-\infty, 0)) \cup \bigcup_{t<0}
B_{\sqrt{-t}(C_\delta+1)}(\bOh) \times \{t\}$ the flow is a smooth
Lagrangian which is a controlled $C^1$-graph over $V_0$. Let
$\widehat{\cM}'$ be a tangent flow of $\widehat{\cM}$ at
$-\infty$. The discussion before implies that
$\mathcal{D}_\lambda\widehat{\cM}'$ is $\varepsilon$-close to some
$V_\lambda \in \mathcal{V}'$ for all $\lambda>0$ as well as that
$\widehat{\cM}'$ is a smooth Lagrangian and controlled $C^1$-graph
over $V_0$ outside of $\bigcup_{t<0} B_{\varepsilon\sqrt{-t}}(\bOh)
\times \{t\}$. 
Furthermore, the proof of  \cite[Theorem 3.1]{NevesTian:translating},
see also \cite[Theorem 3.1]{LambertLotaySchulze}, directly extends to
Brakke flows which are limits of smooth Lagrangian mean curvature
flows with uniformly bounded Lagrangian angle and uniformly bounded
area ratios. Thus we can assume that $\widehat{\cM}'$ is a static pair
of planes $ V' \in \mathcal{V}'$.  
Note that if there is a point where $\widehat{\cM}$ has Gaussian density two, then the flow is static (since it is unit regular) and up to a translation equal to $\cM_{V'}$. However, this would imply that there are points $\tilde{X}_i=(\tilde{\bx}_i,t)$ close to $X_i(t)$ with $\lambda_\text{min}(\tilde{X}_i) < \lambda_\text{min}(X_i(t))$ where  $X_i(t)$ is as in  Lemma~\ref{lem:uniq.lambda.min}. This yields a contradiction.

We may therefore assume that all Gaussian densities are less than two, which implies the convergence $\cM'_{i,t} \rightharpoonup \widehat{\cM}$ is smooth and thus $\widehat{\cM}$ is a smooth, ancient, exact, almost calibrated Lagrangian mean curvature flow with uniformly bounded Lagrangian angle and uniformly bounded area ratios. Since $\widehat{\cM}$ cannot be a union of Lagrangian planes, by \cite[Theorem 1.1]{LambertLotaySchulze} it is up to rigid motions a static Lawlor neck. Arguing similarly as before we see that there cannot be a point $X=(\bx,t)$ with $\lambda_\text{min}(X) < 1$.
\end{proof}

\begin{proof}[Proof of Theorem \ref{thm:Lawlor}]
Consider $0<\varepsilon\leq \varepsilon_1$, where $\varepsilon_1$ as
in Lemma \ref{lem:down-the-scales}. Replacing $\cM$ by
$D_{\lambda^{-1}}\cM$ for some sufficiently small $\lambda>0$ we can
assume that for all $t\in [-1,0)$ there are points $X(t)$ as in Lemma
\ref{lem:uniq.lambda.min} such that $\lambda_\text{min}(X(t))<1\leq
\lambda_\text{max}(X(t))$, minimising $\lambda_{\text{min}}$ in their
time slice, and with $V_\lambda = V_0$ for $\lambda =1$.   Lemma \ref{lem:down-the-scales} then implies that for each $t\in (0,1]$ and $\lambda \in [\lambda_\text{min}(X(t)), 1]$ we have that $\cD_{\lambda^{-1}}(\cM - X(t))$ is $\varepsilon$-close to $V_{\lambda, t} \in \mathcal{V}'$. Applying Remark \ref{rem:pseudolocality} we see that $\cM(t) - \bx(t)$ is a small $C^1$-graph over $V_0$ on $B_1(\bOh) \setminus B_{C \lambda_\text{min}(X(t))}(\bOh)$ for a suitable fixed $C>0$ (depending only on $\varepsilon>0$). 

Restricting to $-\delta\leq t<0$ we can apply Lemma \ref{lem:finding-Lawlor-necks} to find that the rescaled flow $\cD_{(\lambda_\text{min}(X(t))^{-1}}(\cM - X(t))$ has to be a small $C^1$-graph over a Lawlor neck $N$ over a large compact set. Note that by continuity of the flow (and assuming sufficient $C^1$-closeness to $N$) this has to be either $N_+$ or $N_-$ for all $-\delta\leq t<0$.
The full convergence to $N$ as $t\nearrow 0$ then follows by considering a sequence $\varepsilon_i\to 0$.
\end{proof}
  
  \subsection{Continuing the flow past the singularity}
We can now argue that the uniqueness of the tangent flow implies that at the singular time $t=0$, in a neighbourhood of $\bOh \in \mathbb{C}^2$, the flow limits to the union of two Lagrangian graphs such that we can restart the flow as a Lagrangian mean curvature flow.

\begin{lemma} \label{lem:structure-sing-time} There is $r_0>0$ such that on $B_{r_0}(\bOh)\setminus \{\bOh\}$ the flow $\cM(t)$ converges as $t\to 0$ locally to two Lagrangian graphs $L_1, L_2$ over $P_1$ and $P_2$ respectively (where $V_0 = P_1\cup P_2$). Moreover, using Lagrangian neighbourhoods for $P_i$, we have that  $L_i = \text{graph}_{P_i}(\rd f_i)$ for $i=1,2$, where $f_i \in C^2\big(P_i\cap B_{r_0}(\bOh)\big)$ is smooth away from $\bOh$ with $f_i(\bOh)=\rd f_i(\bOh)= \nabla_{P_i}(\rd f_i)(\bOh) = 0$.
\end{lemma}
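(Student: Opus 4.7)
The plan is to combine uniqueness of the tangent flow (Theorem~\ref{thm:u1}) with the Lawlor neck description (Theorem~\ref{thm:Lawlor}) to get smooth convergence of $\cM(t)$ to a pair of Lagrangian graphs away from the origin, and then to use a rescaling argument to upgrade this to $C^2$-regularity across the origin.

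First I would argue that, away from the origin, $\cM(t)$ converges smoothly to a pair of Lagrangian graphs. Theorem~\ref{thm:Lawlor} provides $r_0>0$, scales $r(t)\to 0$ and centres $\bx_0(t)\to \bOh$ such that $\cM(t)\cap(B_{r_0}(\bOh)\setminus B_{r(t)}(\bx_0(t)))$ is a small $C^1$-graph over $V_0$ consisting of two disjoint components, one over each of $P_1$ and $P_2$. Given any compact $K\subset B_{r_0}(\bOh)\setminus\{\bOh\}$, for $t$ sufficiently close to $0$ the Lawlor neck is disjoint from $K$ and $\cM(t)\cap K$ is a smooth Lagrangian flow; standard parabolic regularity then gives $C^\infty_{\mathrm{loc}}$ convergence on $B_{r_0}(\bOh)\setminus\{\bOh\}$ to a smooth Lagrangian limit $L_1\cup L_2$, with $L_i$ a graph over $P_i$. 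Condition $(\ast)$, which holds near the singular time by Lemma~\ref{lem:Conditions}, passes to the limit, so $L_i$ is an \emph{exact} Lagrangian graph and, in a Weinstein neighbourhood of $P_i$, takes the form $L_i=\mathrm{graph}_{P_i}(\rd f_i)$ for $f_i$ smooth on $P_i\cap(B_{r_0}(\bOh)\setminus\{\bOh\})$.

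The crux is showing that $f_i$ extends $C^2$ across the origin with $f_i(\bOh)=\rd f_i(\bOh)=\nabla_{P_i}(\rd f_i)(\bOh)=0$. For this I would consider the rescaled flows $\cM^\lambda(s)=\lambda^{-1}\cM(\lambda^2 s)$. By Theorem~\ref{thm:u1} and White's regularity, $\cM^\lambda\to V_0$ as $\lambda\to 0$ in $C^\infty_{\mathrm{loc}}$ on $(\CC^2\setminus\{\bOh\})\times(-\infty,0)$. Applying pseudolocality (Remark~\ref{rem:pseudolocality}) to propagate smooth closeness from $s=-1$ up to $s=0$ on compact subsets of $V_0\setminus\{\bOh\}$, I deduce that $\lambda^{-1}L_i\to P_i$ in $C^k$ on compact subsets of $P_i\setminus\{\bOh\}$ for every $k$.

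Writing $\lambda^{-1}L_i=\mathrm{graph}_{P_i}(\rd f_i^\lambda)$ with $f_i^\lambda(q)=\lambda^{-2}f_i(\lambda q)$, one has $\rd f_i^\lambda(q)=\lambda^{-1}\rd f_i(\lambda q)$ and $\nabla(\rd f_i^\lambda)(q)=\nabla(\rd f_i)(\lambda q)$. The $C^2$-closeness just established gives $|\rd f_i(\bx)|=o(|\bx|)$ and $|\nabla(\rd f_i)(\bx)|\to 0$ as $\bx\to\bOh$; the former gives continuous extension of $\rd f_i$ to the origin with value $0$, and the latter gives $C^1$-extension with vanishing derivative, so $f_i\in C^2(P_i\cap B_{r_0}(\bOh))$ with the required vanishing at the origin after fixing the additive constant so $f_i(\bOh)=0$. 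The main technical point is extending smooth convergence of the rescaled flow up to the singular time slice $s=0$; this is handled by pseudolocality together with Theorem~\ref{thm:Lawlor}, which directly controls the geometry of the flow on the time slice near the singularity.
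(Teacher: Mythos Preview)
Your argument is correct and follows essentially the same route as the paper: uniqueness of the tangent flow plus pseudolocality gives that the rescalings $\lambda^{-1}L$ are $C^1$-close to $V_0$ with constant tending to zero, which is exactly what forces $u_i=\rd f_i$ to extend in $C^1$ across the origin with vanishing value and derivative. Two minor remarks: you invoke Theorem~\ref{thm:Lawlor} for the initial graphicality, whereas the paper uses only Theorem~\ref{thm:u1} with pseudolocality directly (your choice is fine since Theorem~\ref{thm:Lawlor} is already available); and your appeal to Condition~\eqref{cond:ast} for exactness is unnecessary --- once $u_i$ extends continuously across $\bOh$ it is a closed $1$-form on the simply connected disk $P_i\cap B_{r_0}(\bOh)$, so the Poincar\'e lemma already yields $f_i$.
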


\begin{proof}
 Note that Theorem \ref{thm:u1} implies that there is $0<\lambda_0\leq 1$ and a continuous increasing function $\varepsilon: (0, \lambda_0] \to (0,1)$ with $\varepsilon(\lambda) \to 0$ as $\lambda \to 0$ such that  $\mathcal{D}_{\lambda^{-1}} \cM$ is $\varepsilon(\lambda)$-close to $V_0$. Combining this with Remark \ref{rem:pseudolocality} as in the proof of Theorem \ref{thm:Lawlor} then yields that there is a smooth Lagrangian limiting surface $L$ on $B_{r_0}(\bOh)\setminus \{\bOh\}$ such that $\lambda^{-1}L$ is $\delta(\lambda)$-close to $V_0$ for some continuous decreasing function $\delta: (0, \lambda_0] \to (0,1)$ with $\delta(\lambda) \to 0$ as $\lambda \to 0$. This implies the claimed convergence and that we can decompose $L = L_1\cup L_2$ such that each $L_i$ is a small $C^1$-graph over $P_i\cap B_{r_0}(\mathbf{0})$ of a vector-valued function $u_i$, which is smooth away from $\mathbf{0}$ and $C^1$ across $\bOh$ with $u_i(\bOh) = \nabla_{P_i}u_u(\bOh) = 0$.  Applying the Lagrangian neighbourhood theorem to each $P_i$, we see that $u_i$ may be identified with a closed 1-form on $P_i\cap B_{r_0}(\mathbf{0})$, which is then necessarily exact.  The Poincar\'e lemma then gives a $C^2$-function $f_i$ with $\rd f_i=u_i$ as claimed.  
\end{proof}

\begin{proof}[Proof of Theorem \ref{thm:intro2}]
By Lemma \ref{lem:structure-sing-time} there can be only finitely many singularities at time $T$ where a tangent flow is a static union of two multiplicity one transverse planes. For simplicity of notation we can thus assume as before that there is one singularity and by shifting space and time that it occurs at $(\bOh,0)$. 

Using Lemma \ref{lem:structure-sing-time} we see that $\cM(t)$ converges as $t\nearrow 0$ to a $C^1$-immersed Lagrangian $L$, where the convergence (and $L$) is smooth away from $\bOh$, which is zero Maslov and rational. Furthermore, $L\cap B_{r_0}(\bOh)$ is given as the union of two Lagrangian graphs as stated in Lemma \ref{lem:structure-sing-time}. 

We can use the decomposition given in Lemma \ref{lem:structure-sing-time} to approximate $L$ by smooth, zero Maslov, rational Lagrangians $L^i$ in $C^1$ (by approximating the $C^2$-functions giving $L$ as a graph by smooth functions). Furthermore, we can assume by the estimates of \cite{Wang04} that there is $T>0$ and smooth, zero Maslov, rational solutions to Lagrangian mean curvature flow $(L^i_t)_{t\in [0,T)}$ with $L^i_0 = L^i$. By the $C^1$-convergence of $L^i \to L$ and interior estimates for higher codimension mean curvature flow (see \cite{Wang04} or \cite[Appendix]{BegleyMoore}), we see that the flows are uniformly controlled in $C^\infty$ for $t>0$. Note that we can assume that the convergence $L^i \to L$ is smooth away from the singular points.
Taking the limit we thus see that there is a smooth, zero Maslov, rational Lagrangian mean curvature flow $(L_t)_{t\in (0,T)}$ with $L_t \to L$ in $C^1$ (and smoothly away from $\bOh$) as $t\searrow 0$. This implies that the extended flow is smooth through the singular time, away from the singular points. Note that for $\varepsilon>0$ there is $\delta>0$ such that
$$ \sup_{\substack{\bx \in L_t\cap B_\delta\\ -\delta^2<t<0}} |\theta(\bx,t) - \theta_0| \leq \varepsilon, $$
where $\theta_0$ is the Lagrangian angle of the special Lagrangian cone $V_0$. Thus the grading $\theta$ for the extended flow can be chosen that it is smooth as well through the singular time, away from the singular points.
\end{proof}

\section{The flow in a compact ambient space}\label{sec:compactCY}
In this section we consider the Lagrangian mean curvature flow in a
compact Calabi--Yau surface  and we briefly explain the
modifications needed to prove the results from
Section~\ref{sec:neckpinch} in this setting.

Let us suppose that $X$ is a compact complex surface with complex structure
$J$, admitting a non-vanishing holomorphic volume form $\Omega$ and a
K\"ahler metric $\omega$ with volume form $\frac{1}{2}\omega^2 = \Omega
\wedge\bar\Omega$. Let $L\subset X$ be a Lagrangian submanifold. We say that $L$
is zero Maslov if there is a Lagrangian angle function $\theta: L \to
\mathbb{R}$ satisfying
$$ \Omega|_L = e^{i\theta} dA_L $$
in terms of the Riemannian area form $dA_L$ of $L$. Following the
notion of rationality in Fukaya~\cite[Definition 2.2]{Fukaya03} we assume furthermore
that $[\omega]$ defines an integral cohomology class in $H^2(X;
\mathbb{R})$, and let $\xi$ denote a complex line bundle over $X$
together with a unitary connection $\nabla^\xi$ with curvature form
$F_{\nabla^\xi} = 2\pi i \omega$. The connection $\nabla^\xi$ is then
flat when restricted to $L$.
\begin{definition}\label{defn:rationalL}
  The Lagrangian $L$ in  $X$ is \emph{rational}
if the holonomy group of $\nabla^\xi$ on $L$ is a finite subgroup of
$U(1)$.
\end{definition}

We will follow the approach of White~\cite[Section 4]{White.regularity},
viewing the mean curvature flow in $X$ as a mean curvature flow in a
larger Euclidean space with an additional forcing term. More
precisely, let $X\subset \mathbb{R}^N$ be an isometric embedding. The
mean curvature flow $L_t$ in $X$ is then equivalent to the flow
$$ \frac{\partial}{\partial t} \mathbf{x} = \mathbf{H} + \nu, $$
where $\mathbf{H}$ denotes the mean curvature vector inside $\mathbb{R}^N$
and $\nu(\mathbf{x}, t) = -\mathrm{tr}\,
\Pi(\mathbf{x})|_{T_\mathbf{x}L_t}$ is defined in terms of the second
fundamental form $\Pi$ of $X$, restricted to the tangent space of $L_t$
at $\mathbf{x}$. In particular $|\nu|\leq C$ for a constant
independent of $\mathbf{x},t$. Note that $\mathbf{H} + \nu$ is
simply the mean curvature of $L_t$ in the ambient space $X$, and so we
have $|\nabla \theta| = |\mathbf{H} + \nu|$.

We need to recall the form of the monotonicity formula for the
mean curvature flow with forcing term $\nu$: instead of
\eqref{eq:Huisken} we have
\begin{align*} 
\frac{d}{dt} \int_{L_t} f \rho_{\mathbf{x}_0, t_0}\, d\mathcal{H}^2
  &= \int_{L_t} (\partial_t f - \Delta f) \rho_{\mathbf{x}_0, t_0}\,
  d\mathcal{H}^2 + \int_{L_t} f \left| \frac{\nu}{2}\right|^2\,
  \rho_{\mathbf{x}_0, t_0} \, d\mathcal{H}^2 \\
&\qquad - \int_{L_t} f\, \left| \mathbf{H} -
  \frac{(\mathbf{x}-\mathbf{x}_0)^\perp}{2(t_0-t)} + \frac{\nu}{2} \right|^2\,
\rho_{\mathbf{x}_0, t_0}\, d\mathcal{H}^2. 
\end{align*}
The estimate of Ecker~\cite[Theorem 3.4]{Ecker.logSobolev} also
applies to subsolutions of the (drift) heat equation along the
rescaled flow with a forcing term, with a slightly modified function
$p(t)$ as in \cite[Theorem 3.2]{Ecker.logSobolev}. 

For simplicity we assume   the first singular time is at $t=0$  and
we are studying the tangent flow at $\bOh\in \mathbb{R}^N$. The
corresponding rescaled flow $M_\tau$ is given by
$$ \frac{\partial}{\partial \tau} \mathbf{x} = \mathbf{H} +
  \frac{\mathbf{x}^\perp}{2} + \nu. $$
Note that $M_\tau\subset e^{\tau/2}X\subset \mathbb{R}^N$, and the forcing
term $\nu$ is now given by $\nu(\mathbf{x}, \tau) = -\mathrm{tr}\,
\Pi(\mathbf{x}, \tau)|_{T_\mathbf{x} M_\tau}$, where $\Pi(\mathbf{x},
\tau)$ is the second fundamental form of $e^{\tau/2}X$. In particular
we have the estimate $|\nu|\leq Ce^{-\tau/2}$. 

Let us write $J_0, \Omega_0, \omega_0$ for the complex structure,
holomorphic volume form and symplectic form on $T_{\bOh}X$. In particular
we identify $T_{\bOh}X = \mathbb{C}^2$, equipped with its standard
structures. Note that $\lim_{\tau\to\infty} e^{\tau/2} X = T_{\bOh}X$ in
the sense of $C^\infty$ convergence on compact sets, and so any
tangent flow at $(\bOh,0)$, defined as a sequential limit of $M_\tau$ as
$\tau\to\infty$, lives naturally in $T_{\bOh}X$. As in
Section~\ref{sec:setup} we consider special Lagrangian unions $V=
P_1\cup P_2$ contained in a neighbourhood $\mathcal{V}$
of a given tangent flow $V_0$. For any $V\in\mathcal{V}$ there is a
hyperk\"ahler rotation of the complex structure on $T_{\bOh}X$ such that
$V$ is given by $\{zw=0\}$ for complex coordinates $z,w$. Note that
$z,w$ can be viewed as linear functions on $\mathbb{R}^N$, and in
particular they define functions on $X\subset \mathbb{R}^N$. We have
the following analogous result to Lemma~\ref{lem:nablazw}.

\begin{lemma}\label{lem:nablazw2}
  There is a constant $C > 0$, depending on $X\subset\mathbb{R}^N$ and
  the choice of $\mathcal{V}$, such that on any Lagrangian subspace
  $P\subset T_{\mathbf{x}}X$ with Lagrangian angle $\theta$ we have
  $$ |\nabla z \cdot \nabla w| \leq C ( |\theta - \theta_V| +
    |\mathbf{x}|). $$
\end{lemma}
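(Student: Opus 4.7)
The plan is to reduce the estimate to Lemma~\ref{lem:nablazw} by a perturbative argument exploiting that the Calabi--Yau data on $X \subset \mathbb{R}^N$ vary smoothly, so they differ at $\mathbf{x}$ from the corresponding data on $T_{\bOh}X = \mathbb{C}^2$ by $O(|\mathbf{x}|)$. The idea is to transport $P$ from $T_{\mathbf{x}}X$ to an approximately Lagrangian plane in $T_{\bOh}X$ via an ambient isometry of $\mathbb{R}^N$, correct it to a genuine Lagrangian plane with controlled error, apply Lemma~\ref{lem:nablazw}, and track how the pointwise value of $\nabla z \cdot \nabla w$ changes at each step.

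First I would pick a smooth family of orthogonal maps $\Phi_{\mathbf{x}} \in O(N)$, defined for $\mathbf{x}$ in a neighbourhood of $\bOh$ in $X$, with $\Phi_{\bOh} = \mathrm{Id}$, $\Phi_{\mathbf{x}}(T_{\mathbf{x}}X) = T_{\bOh}X$ and $\|\Phi_{\mathbf{x}} - \mathrm{Id}\| \leq C|\mathbf{x}|$; such a family is readily produced by applying Gram--Schmidt to the orthogonal projection $T_{\mathbf{x}}X \to T_{\bOh}X$. Set $\tilde P = \Phi_{\mathbf{x}}(P) \subset T_{\bOh}X$. Since $z, w$ are linear functions on $\mathbb{R}^N$ with constant Euclidean gradients and $\Phi_{\mathbf{x}}$ is a global isometry of $\mathbb{R}^N$ satisfying $\Phi_{\mathbf{x}} = \mathrm{Id} + O(|\mathbf{x}|)$, the values of $\nabla z \cdot \nabla w$ computed on $P$ and on $\tilde P$ differ by at most $C|\mathbf{x}|$. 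The smooth variation of $\omega$ and $\Omega$ on $X$ implies $(\Phi_{\mathbf{x}})_* \omega|_{T_{\mathbf{x}}X} = \omega_0 + O(|\mathbf{x}|)$ and similarly for $\Omega$, so $\tilde P$ lies within Grassmannian distance $O(|\mathbf{x}|)$ of a genuine Lagrangian plane $\hat P \subset \mathbb{C}^2$, and its Lagrangian angle $\hat\theta$ satisfies $|\hat\theta - \theta| \leq C|\mathbf{x}|$; by the same principle the values of $\nabla z \cdot \nabla w$ on $\tilde P$ and on $\hat P$ again differ by $O(|\mathbf{x}|)$.

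Finally, applying Lemma~\ref{lem:nablazw} to $\hat P \subset \mathbb{C}^2$ yields $|\nabla z \cdot \nabla w| \leq C|\hat\theta - \theta_V|$ on $\hat P$, and combining this with the triangle inequality $|\hat\theta - \theta_V| \leq |\hat\theta - \theta| + |\theta - \theta_V| \leq C(|\mathbf{x}| + |\theta - \theta_V|)$ together with the two transfer estimates above yields the claimed bound on $P$. The main technical point to verify will be that all the $O(|\mathbf{x}|)$ error terms are uniform in $P$ and in $V \in \mathcal{V}$: this should follow from the smoothness of $X \hookrightarrow \mathbb{R}^N$ and from the fact that the hyperk\"ahler rotations used to define $(z, w)$ depend smoothly and with uniform bounds on $V \in \mathcal{V}$.
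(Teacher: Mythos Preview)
Your approach is correct and follows the same overall strategy as the paper: transport the plane $P$ to $T_{\bOh}X$, control all the geometric data to $O(|\mathbf{x}|)$, and invoke Lemma~\ref{lem:nablazw}. The difference is purely in the choice of transport. The paper uses a Darboux chart $\pi : U \to T_{\bOh}X$ with $\pi^*\omega_0 = \omega$ and $d\pi|_{\bOh} = \mathrm{Id}$; because $\pi$ is a symplectomorphism, the image $\pi_*P$ is already a genuine Lagrangian plane in $\mathbb{C}^2$, so there is no need for your intermediate correction step $\tilde P \rightsquigarrow \hat P$. Your ambient orthogonal map $\Phi_{\mathbf{x}}$ does not preserve $\omega$, which is why you have to argue separately that $\tilde P$ is $O(|\mathbf{x}|)$-close to some Lagrangian $\hat P$ and that $\hat\theta$ is close to $\theta$. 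Both arguments are fine; the Darboux chart just buys a slightly shorter proof.

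One small omission: your family $\Phi_{\mathbf{x}}$ is only defined in a neighbourhood of $\bOh$, so you should remark (as the paper does) that outside such a neighbourhood $|\mathbf{x}|$ is bounded away from zero while $|\nabla z \cdot \nabla w|$ is uniformly bounded, so the inequality holds trivially there.
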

\begin{proof}
  In a small neighbourhood $U$ over ${\bOh}\in X$ we can use a Darboux chart
  to define a smooth projection map $\pi : U\to T_{\bOh}X$, such that
  $\pi^*\omega_0 = \omega$ and the derivative of $\pi$ is the identity
  map at ${\bOh}$. It follows that the complex structures and holomorphic
  volume forms satisfy $|\pi^*J_0 - J|, |\pi^*\Omega_0 - \Omega| \leq
  C |\mathbf{x}|$ for a constant $C > 0$. Similarly $|\pi^*z - z|,
  |\pi^*w - w| \leq C|\mathbf{x}|$, and the derivatives of $z, w$
  satisfy the same bounds. The required estimate on the neighbourhood
  $U$  then follows by
  applying Lemma~\ref{lem:nablazw} to the image $\pi(P) \subset
  T_{\bOh}X$. The estimate is clear outside of $U$ since $|\mathbf{x}|$ is
  bounded away from zero on $X\setminus U$.   
\end{proof}

We define the excess $\mathcal{A}(M_\tau)$ for a Lagrangian in
$e^{\tau/2}X\subset \mathbb{R}^N$ as in \eqref{eq:AMdefn}, and
Condition~\eqref{cond:ddag} is as before. Note that the condition of uniformly
bounded area ratios is automatic in the compact case using the monotonicity
formula, and the uniform bound for the Lagrangian angle is also
preserved by the maximum principle. Let us record here the following consequence of the
monotonicity formula, analogous to \eqref{eq:Achange}, recalling that
$|\nabla\theta| = |\mathbf{H} + \nu|$:
 \begin{align*} \mathcal{A}(M_{\tau_1}) - \mathcal{A}(M_{\tau_2}) &\geq
  \int_{\tau_1}^{\tau_2} \int_{M_\tau} \left( 2|\mathbf{H} + \nu|^2 +
    \left|\mathbf{H} + \frac{\mathbf{x}^\perp}{2} +
      \frac{\nu}{2}\right|^2\right)
  e^{-|\mathbf{x}|/4}\,d\mathcal{H}^2\,  d\tau\\
  &\quad - C \int_{\tau_1}^{\tau_2} \int_{M_\tau} |\nu|^2
  e^{-|\mathbf{x}|^2/4}\, d\mathcal{H}^2\, d\tau\\
  &\geq \frac{1}{16} \int_{\tau_1}^{\tau_2} \int_{M_\tau} (
  |\mathbf{H}|^2 + |\mathbf{x}^\perp|^2)\, d\mathcal{H}^2\, d\tau - C
  e^{-\tau_1}, 
\end{align*}
for $\tau_1 < \tau_2$, where we also used the uniform bound on the
Lagrangian angle. 

We define $I_V(M_\tau)$ and $D_V(M_\tau)$ according to
Definition~\ref{dfn:IVDV}. Note that now the function $|\mathbf{x}|
d_V$ on $M_\tau$ is no longer uniformly equivalent to $|zw|$. In fact
if we denote the orthogonal projection of $\mathbf{x}$ onto $T_{\bOh}X$ by
$\tilde{\mathbf{x}}$, then $|zw|(\mathbf{x})$ is uniformly equivalent to
$|\tilde{\mathbf{x}}| d_V(\tilde{\mathbf{x}})$. At the same time,
since on $X$ we have $|\mathbf{x} - \tilde{\mathbf{x}}| \leq 
C|\mathbf{x}|^2$, it follows that on $M_\tau\subset e^{\tau/2}X$ we
have
$$ |\mathbf{x} - \tilde{\mathbf{x}}| \leq C e^{-\tau/2} |\mathbf{x}|^2. $$
It follows from this, together with the bounds on the area ratios of $M_\tau$, that
$$ C^{-1} \int_{M_\tau} |zw|^2 e^{-|\mathbf{x}|^2/4} \leq
  \int_{M_\tau} |\mathbf{x}|^2 d_V^2\, e^{-|\mathbf{x}|^2/4} \leq
  C\left(\int_{M_\tau} |zw|^2 e^{-\mathbf{x}^2/4} +
    e^{-\tau/2}\right). $$
In particular as long as $I_V(M_\tau) \geq e^{-\tau/2}$, we have that
$I_V(M_\tau)$ is uniformly equivalent to the Gaussian $L^2$-norm of
$|zw| + |\theta - \theta_V|$. We have the following. 

\begin{lemma}
  Suppose that $V\in \mathcal{V}$ and $I_V(M_\tau) \geq
  e^{-\tau/2}$. Then the conclusions (1), (2), (3) of
  Lemma~\ref{lem:IVest} hold, with $K_\gamma\subset
  \mathbb{R}^N\setminus \{\bOh\}$ in (3).
\end{lemma}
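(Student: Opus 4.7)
The plan is to trace through the proof of Lemma~\ref{lem:IVest}, tracking the three sources of extra error that appear in the compact setting: (a) the ambient forcing term $\nu$, which satisfies $|\nu|\le C e^{-\tau/2}$ along the rescaled flow and produces an additional $|\nu/2|^2$ term in the monotonicity formula; (b) the additional $C|\mathbf{x}|$ term in Lemma~\ref{lem:nablazw2} (which, for points of $M_\tau\subset e^{\tau/2}X$, corresponds to a geometric scale $|\mathbf{x}|e^{-\tau/2}$ in $X$); and (c) the discrepancy between $|\mathbf{x}|^2 d_V^2$ and $|zw|^2$ on $M_\tau$, controlled by $|\mathbf{x}-\tilde{\mathbf{x}}|\le Ce^{-\tau/2}|\mathbf{x}|^2$. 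Each of these produces an exponentially small correction, and the hypothesis $I_V(M_\tau)\ge e^{-\tau/2}$, equivalently $I_V(M_\tau)^2\ge e^{-\tau}$, is precisely what allows them to be absorbed into the right-hand sides.

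First I would re-derive the analog of the key evolution inequality \eqref{eq:zwthetaheateqn}. Since $z,w$ are linear on $\mathbb{R}^N$ and the flow moves with velocity $\mathbf{H}+\nu$ equal to the full mean curvature in $\mathbb{R}^N$, the formula $(\partial_t-\Delta)(zw)=-2\nabla z\cdot\nabla w$ is unchanged, while $\theta$ continues to satisfy $(\partial_t-\Delta)\theta=0$. Combining with Lemma~\ref{lem:nablazw2} I obtain
\[
(\partial_t-\Delta)(|zw|^2+|\theta-\theta_V|^2) \le C(|zw|^2+|\theta-\theta_V|^2) + Ce^{-\tau}|\mathbf{x}|^2,
\]
where the last term reflects the extra $|\mathbf{x}|$ factor in Lemma~\ref{lem:nablazw2} after accounting for the rescaling. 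Applying the forced monotonicity formula to this inequality, and using that $|\nu|^2\le Ce^{-\tau}$ has Gaussian integral $O(e^{-\tau})$ against the uniform area bounds, and converting between $|\mathbf{x}|^2 d_V^2$ and $|zw|^2$ via the comparison recalled in the excerpt, yields
\[
\int_{M_{\tau+s}}(|zw|^2+|\theta-\theta_V|^2)e^{-|\mathbf{x}|^2/4} \le C\int_{M_\tau}(|zw|^2+|\theta-\theta_V|^2)e^{-|\mathbf{x}|^2/4} + Ce^{-\tau}.
\]
Under the hypothesis $I_V(M_\tau)^2\ge e^{-\tau}$ the error term $Ce^{-\tau}$ is bounded by $CI_V(M_\tau)^2$, and the equivalence of $I_V(M_\tau)^2$ with the Gaussian $L^2$-norm of $|zw|+|\theta-\theta_V|$ in this regime gives conclusion~(1). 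A by-product is the lower bound $I_V(M_{\tau+s})^2\ge C^{-1}e^{-\tau}\ge C^{-1}e^{-(\tau+s)}$, so the hypothesis propagates forward in time and can be invoked at the later times needed for (2) and (3).

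For conclusion~(2), I would use the version of Ecker's log-Sobolev inequality~\cite{Ecker.logSobolev} valid for the rescaled flow with forcing (with its slightly adjusted $p(t)$), applied to $u_\delta+v_\delta$ exactly as in the Euclidean proof of Lemma~\ref{lem:IVest}, to deduce an $L^p$-bound on $|zw|^2+|\theta-\theta_V|^2$ with $p>1$. Re-running the heat-kernel argument using the forced monotonicity formula applied to $(|zw|^2+|\theta-\theta_V|^2)^p$, together with the Gaussian ratio estimate \eqref{eq:rhoest1}--\eqref{eq:rhoest2} (which is purely a pointwise inequality in $\mathbb{R}^N$ and unaffected by the ambient geometry), yields the claimed pointwise bound; the exponentially small error terms from (a)--(c) are again absorbed using the hypothesis. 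Conclusion~(3) follows from the $L^p$ bound just proved by the same H\"older argument as in \eqref{eq:int40}, with $K_\gamma\subset\mathbb{R}^N\setminus\{\mathbf{0}\}$ in place of $\mathbb{C}^2\setminus\{\mathbf{0}\}$.

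The main obstacle is careful book-keeping of the error terms: the price paid at each step (forcing in the evolution, extra term in Lemma~\ref{lem:nablazw2}, $|\mathbf{x}|^2 d_V^2$--$|zw|^2$ discrepancy, forcing in the monotonicity formula) must all be checked to fit under the single umbrella $e^{-\tau}\le I_V(M_\tau)^2$. The one delicate point is propagation of the hypothesis along the interval $[\tau,\tau+1]$, which is what makes the inductive step in the chain (1)$\Rightarrow$(2)$\Rightarrow$(3) go through.
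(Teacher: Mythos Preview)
Your approach is correct but proceeds differently from the paper. You carry the three error sources (the forcing $\nu$, the extra $|\mathbf{x}|$ in Lemma~\ref{lem:nablazw2}, and the $|\mathbf{x}|^2d_V^2$ vs.\ $|zw|^2$ discrepancy) through each step and absorb them at the end via $e^{-\tau}\le I_V(M_\tau)^2$. The paper instead folds all errors into the test function at the outset: it replaces $|zw|+|\theta-\theta_V|$ by
\[ f = |zw| + |\theta-\theta_V| + e^{-\tau/2}|\mathbf{x}| + e^{-\tau} \]
(with $\tau$ fixed), observes that the inequalities $(\partial_t-\Delta)|zw|\le C(|\theta-\theta_V|+e^{-\tau/2}|\mathbf{x}|)$, $(\partial_t-\Delta)|\theta-\theta_V|\le 0$ and $(\partial_t-\Delta)|\mathbf{x}|\le |\nu|\le Ce^{-\tau/2}$ combine to give the clean inequality $(\partial_t-\Delta)f\le Cf$, and notes that under the hypothesis the Gaussian $L^2$-norm of $f$ is uniformly equivalent to $I_V(M_\tau)$. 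From there the Euclidean proof runs verbatim for $f$. Your route requires more book-keeping --- e.g.\ in the pointwise step for~(2) one must Young-out the cross term $e^{-\tau}|\mathbf{x}|^2 f^{p-1}$ and check that the resulting $e^{-p\tau}|\mathbf{x}|^{2p}$ contribution, integrated against the shifted heat kernel, is still dominated by $Ce^{|\mathbf{x}_0|^2/4p}I_V(M_\tau)^{2p}$ --- whereas the paper's modification sidesteps this entirely.

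One remark: the ``by-product'' lower bound $I_V(M_{\tau+s})^2\ge C^{-1}e^{-\tau}$ you assert does not follow from~(1), which is only an upper bound. Fortunately the propagation is unnecessary: every estimate in~(2) and~(3) has $I_V(M_\tau)$ on the right-hand side, and the hypothesis is invoked only at the initial time~$\tau$ to convert the $O(e^{-\tau})$ errors into $O(I_V(M_\tau)^2)$. So this ``delicate point'' can simply be dropped from your argument.
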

\begin{proof}
  In the proof of Lemma~\ref{lem:IVest} we essentially applied the monotonicity
  formula to the function $f = |zw| + |\theta-\theta_V|$. Here we
  claim that the same argument works if instead we use the function
  $$ f = |zw| + |\theta - \theta_V| + e^{-\tau/2} |\mathbf{x}| +
    e^{-\tau}. $$
  Here we are thinking of $\tau$ as being fixed. Consider the solution
  of the mean curvature flow with forcing term as above with $L_{-1} =
  M_\tau$. Then using Lemma~\ref{lem:nablazw2}, and the bound
  $|\nu|\leq Ce^{-\tau/2}$ for the forcing term, implies that
  $$ (\partial_t - \Delta) |zw| \leq C(|\theta - \theta_V| + e^{-\tau/2}|\mathbf{x}|)$$
  in the distributional sense. We also have
   \begin{align*} (\partial_t - \Delta)
      |\theta - \theta_V| &\leq 0, \\
      (\partial_t - \Delta) |\mathbf{x}| &\leq |\nu| \leq C
      e^{-\tau/2},
    \end{align*}
  and combining these we find that $(\partial_t - \Delta) f \leq Cf$
  for a constant $C > 0$. At the same time if $I_V(M_\tau) \geq
  e^{-\tau/2}$, then $I_V(M_\tau)$ is uniformly equivalent to the
  Gaussian $L^2$-norm of $f$. Using this the arguments in the proof of
  Lemma~\ref{lem:IVest} can be followed verbatim. 
\end{proof}

Note that if $D_V(M_\tau) = d \geq e^{-\tau/2}$ and $\tau$ is
large, then the region $|\mathbf{x}| < R_d$ (with $R_d$
defined in \eqref{eq:Rddefn}), where we expect good graphicality,
is much smaller than the ball over which
$e^{\tau/2}X$ is graphical over the tangent space $T_{\bOh}X$. More
precisely, we have the following under a stronger assumption for
$D_V(M_\tau)$.
\begin{lemma}\label{lem:graph30}
  Suppose that $D_V(M_\tau) = d\geq e^{-\tau/20}$ and $\tau$ is
  sufficiently large. Then on the region $|\mathbf{x}| < R_d$, we can view
  $e^{\tau/2}X$ as the graph of $v$ over $T_{\bOh}X$, where
\begin{equation}\label{eq:Xgraphest}
  |v|, |\nabla v| \leq C e^{-\tau/2} R_d = C e^{-\tau/2} |\ln d| \leq Cd^9.
\end{equation}
\end{lemma}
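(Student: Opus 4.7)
The plan is to reduce everything to a Taylor expansion of the embedding $X\subset\mathbb{R}^N$ at $\bOh$ and then track how the graph representation behaves under the parabolic rescaling by $e^{\tau/2}$. Since $X$ is a smooth surface through $\bOh$, there exist a radius $r>0$ and a smooth map
$u: B_r \subset T_{\bOh}X \to (T_{\bOh}X)^\perp$
with $u(\bOh)=\bOh$ and $\nabla u(\bOh)=0$ (because $T_{\bOh}X$ is the tangent space at $\bOh$), such that $X\cap B_r(\bOh)$ is the graph of $u$ over $T_{\bOh}X$. Taylor's theorem then gives a constant $C$ depending only on $X\subset\mathbb{R}^N$ such that
\[\label{eq:graph30-taylor} |u(y)| \leq C|y|^2, \qquad |\nabla u(y)| \leq C|y| \quad \text{for all } y\in B_r.\]

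Next I would pull this through the rescaling. The tangent space $T_{\bOh}X$ is a linear subspace of $\mathbb{R}^N$ and is therefore fixed by the map $\mathbf{x}\mapsto e^{\tau/2}\mathbf{x}$. Consequently $e^{\tau/2}X\cap B_{e^{\tau/2}r}(\bOh)$ is the graph over $T_{\bOh}X$ of the function
\[ v(\tilde{\mathbf{x}}) := e^{\tau/2}\, u(e^{-\tau/2}\tilde{\mathbf{x}}), \quad |\tilde{\mathbf{x}}|< e^{\tau/2} r. \]
Substituting $y=e^{-\tau/2}\tilde{\mathbf{x}}$ into \eqref{eq:graph30-taylor} yields the pointwise bounds
\[ |v(\tilde{\mathbf{x}})| \leq C e^{-\tau/2}|\tilde{\mathbf{x}}|^2, \qquad |\nabla v(\tilde{\mathbf{x}})| \leq C e^{-\tau/2}|\tilde{\mathbf{x}}|.\]

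To conclude, I need to check both that the region $|\tilde{\mathbf{x}}| < R_d$ lies in the domain of $v$ and that these bounds imply the claimed estimate in terms of $d$. For the first point: by the definition \eqref{eq:Rddefn} we have $R_d=\sqrt{8p_0|\ln d|}$, which grows only like $\sqrt{\tau}$ (since $d\leq 1$), whereas $e^{\tau/2} r$ grows exponentially, so $R_d < e^{\tau/2} r$ once $\tau$ is large enough. For the quantitative bound, the hypothesis $d \geq e^{-\tau/20}$ gives $e^{-\tau/2} \leq d^{10}$; combined with $R_d \leq C\sqrt{|\ln d|}$ this yields
\[ |v|, |\nabla v| \leq C e^{-\tau/2} R_d^2 \leq C d^{10}|\ln d|,\]
which is bounded above by $C d^9$ once $d$ is sufficiently small (equivalently, $\tau$ sufficiently large), since $|\ln d|\leq d^{-1}$. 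The stated bound with $R_d$ in place of $R_d^2$ then follows a fortiori (using $R_d\geq 1$ for large $\tau$).

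There is no serious obstacle here: the argument is essentially Taylor's theorem plus bookkeeping. The only point that requires attention is matching the logarithmic size of $R_d$ against the stronger hypothesis $d\geq e^{-\tau/20}$ to extract the polynomial bound $Cd^9$; the factor $10$ in the exponent of the hypothesis is exactly what is needed to absorb both the factor $R_d^2 = O(|\ln d|)$ and leave a power of $d$ to spare.
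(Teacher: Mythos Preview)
Your argument is correct and follows exactly the same route as the paper: write $X$ locally as a graph over $T_{\bOh}X$ with quadratic bounds, rescale, and bound $|v|,|\nabla v|\leq Ce^{-\tau/2}R_d^2$ on $|\mathbf{x}|<R_d$, then combine $e^{-\tau/2}\leq d^{10}$ with $R_d^2=O(|\ln d|)$ to get $Cd^9$. One small slip: your final ``a fortiori'' remark is in the wrong direction (since $R_d\geq 1$ gives $R_d^2\geq R_d$, not $\leq$), but this is harmless because the paper's own proof also only obtains the $R_d^2$ bound, and the displayed chain in the lemma should be read with $R_d^2\sim|\ln d|$ rather than $R_d=|\ln d|$.
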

\begin{proof}
  Note that in a neighbourhood of $\bOh$, $X$ is the graph of a function
  $V$ over $T_{\bOh}X$ with $|V|\leq C|\mathbf{x}|^2$ and $|\nabla V| \leq
  C|\mathbf{x}|$. Rescaling this, and using that $R_d \ll e^{\tau/2}$
  for large $\tau$, we find that on the ball $|\mathbf{x}| < R_d$,
  $e^{\tau/2}X$ is the graph of $v$ over $T_{\bOh}X$ such that $|v|, |\nabla v| \leq C
e^{-\tau/2} R_d^2$. The required estimate follows from this. 
\end{proof}

Using this, the results following Lemma~\ref{lem:IVest} hold in the present
context, with minor adjustments of the proofs,
as long as we always ensure that $D_V(M_\tau) \geq
e^{-\tau/20}$, and that $\tau$ is sufficiently
large. In Condition \eqref{cond:ast}, to make sense of the condition
$\int_\gamma\lambda =0$ for loops $\gamma\in M_\tau\cap B_1$,
we use a Darboux chart for $\omega$ on $B_1\cap e^{\tau/2}X$ to define the
Liouville form $\lambda$. Assuming $\tau$
is large, such a chart exists as in the proof of
Lemma~\ref{lem:nablazw2} above. Note that in a Darboux chart the
holonomy of $\nabla^\xi$ around a loop $\gamma$ is given by
$$ e^{-2\pi i \int_D \omega} = e^{-2\pi i \int_\gamma \lambda}, $$
where $\gamma = \partial D$. It follows that the rationality condition
in Definition~\ref{defn:rationalL} coincides with the rationality
condition in Definition~\ref{dfn:exact} restricted to loops contained
in the chart.

Proposition~\ref{prop:d30} takes the following form.
\begin{prop}\label{prop:dcompact}
  Let $\delta_2 > 0$. There is an $\epsilon_2 > 0$, depending on
  $\delta_2$ such that if the flow
  $M_\tau$ satisfies Condition \eqref{cond:ddag} for $\tau\in [\tau_0-1, \tau_0
  + T + 10]$ with $T > 0$, $M_{\tau_0}$ satisfies Condition \eqref{cond:ast},
  $\tau_0$  is sufficiently large (depending
  on $\delta_2$), and
  \begin{enumerate}
    \item $\mathcal{A}_\alpha(M_{\tau_0}) -
      \mathcal{A}_\alpha(M_{\tau_0+T}) < \epsilon_2$,
    \item $D_{V_0}(M_{\tau_0}) < \epsilon_2$,
    \end{enumerate}
    then $D_{V_0}(M_{\tau_0+T}) < \delta_2$. 
\end{prop}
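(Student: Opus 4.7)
The plan is to adapt the iteration scheme from the proof of Proposition \ref{prop:d30} to the compact setting, exploiting the fact that on $e^{\tau/2}X$ the ambient curvature and the forcing term $\nu$ contribute errors of size $O(e^{-\tau/2})$, which is negligible compared to $D_V(M_\tau)$ as long as the distance stays above a suitable threshold. Specifically, I would fix a threshold $\eta_\tau = e^{-\tau/20}$: below this threshold, the conclusion $D_{V_0}(M_\tau) < \delta_2$ is automatic (for $\tau_0$ large enough), and above it, all the estimates from Sections~\ref{sec:disttoplanes}--\ref{sec:decay} transfer verbatim thanks to Lemma~\ref{lem:graph30} and the remark following it.

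First, I would choose $\tau_0$ so large that $e^{-\tau_0/20} < \delta_2/(10C)$, where $C$ is the comparison constant from Lemma~\ref{lem:DVV'comp}. This guarantees that if at any moment in the iteration $D_V(M_\tau) < e^{-\tau/20}$ for some $V \in \mathcal V'$ with $d(V_0,V) \leq C\epsilon_2$, then $D_{V_0}(M_\tau) < \delta_2$ as required. Above the threshold, Lemma~\ref{lem:graph30} shows that the ambient surface $e^{\tau/2}X$ is graphical over $T_\bOh X$ with $C^1$-norm $\leq CD_V(M_\tau)^9$; combined with $|\nu| \leq Ce^{-\tau/2} \leq CD_V(M_\tau)^{10}$, these errors are well below the $D_V^{1+\alpha_1}$ scale controlling $|\mathcal A(M_\tau)|$ in Proposition~\ref{prop:Aest}, so the excess estimate, the graphicality propagation (Proposition~\ref{prop:graphical}), the three-annulus lemma (Proposition~\ref{prop:3ann2}) and the trichotomy decay estimate (Proposition~\ref{prop:decay10}) all hold in the compact setting with the same constants, up to absorbing the exponentially small corrections.

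Next I would run the iteration from the proof of Proposition \ref{prop:d30}: starting from $V_0$, repeatedly apply the compact analogue of Proposition~\ref{prop:decay10} at times $\tau_0 + iN$ to produce a sequence $V_1, V_2, \ldots \in \mathcal V$ with $d(V_i,V_{i+1}) \leq Ce_i$, where $e_i = D_{V_i}(M_{\tau_0+iN})$. The telescoping argument gives
\[
\sum_i e_i \;\leq\; 2e_0 + 2C\bigl(\mathcal A_\alpha(M_{\tau_0}) - \mathcal A_\alpha(M_{\tau_0+T})\bigr) \;\leq\; C\epsilon_2,
\]
which keeps both $e_k$ and $d(V_0,V_k)$ bounded by $C\epsilon_2$, so $V_k$ stays in $\mathcal V'$. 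Condition \eqref{cond:ast} propagates along the iteration by the compact version of Lemma~\ref{lem:condast}. The iteration stops when one of the following occurs: (a) time runs out, (b) the distance drops below the threshold $\eta_\tau$, or (c) case (iii) of the decay estimate triggers, in which case the compact analogue of Proposition~\ref{prop:d20} forces $D_{V_k}(M_{\tau_0+T}) < \delta_1$ for any prescribed $\delta_1$.

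The main obstacle is verifying that all the quantitative estimates from Sections~\ref{sec:heateqnlimits}--\ref{sec:decay} -- particularly the excess-to-distance bound $|\mathcal A(M_\tau)| \leq D_V(M_\tau)^{1+\alpha_1}$ and the three-annulus lemma -- truly go through once we carefully track the error terms $e^{-\tau/2}$ coming from $\nu$ and the ambient second fundamental form. The key mechanism is that every time an error of order $e^{-\tau/2}$ enters an inequality, it is either absorbed into the $D_V^{1+\alpha}$ side using the threshold $D_V \geq e^{-\tau/20}$, or it produces an additional constant $Ce^{-\tau_0}$ in the change-of-excess inequality which is harmless since $\tau_0$ is taken large. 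With this bookkeeping in place, the final comparison $D_{V_0}(M_{\tau_0+T}) \leq C(\delta_1 + \epsilon_2) + C e^{-\tau_0/20} < \delta_2$ via Lemma~\ref{lem:DVV'comp} delivers the conclusion.
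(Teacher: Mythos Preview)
Your approach is exactly what the paper intends: it gives no separate proof of Proposition~\ref{prop:dcompact}, but just before the statement asserts that ``the results following Lemma~\ref{lem:IVest} hold in the present context, with minor adjustments of the proofs, as long as we always ensure that $D_V(M_\tau)\geq e^{-\tau/20}$, and that $\tau$ is sufficiently large.'' Your threshold $\eta_\tau=e^{-\tau/20}$, your appeal to Lemma~\ref{lem:graph30}, and your rerunning of the iteration from Proposition~\ref{prop:d30} are precisely the content of that sentence.

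There is one loose end in your write-up, namely stopping case~(b). You claim that if $D_{V_k}(M_{\tau_0+kN})<e^{-(\tau_0+kN)/20}$ then ``$D_{V_0}(M_\tau)<\delta_2$ as required,'' but the proposition demands control at the \emph{final} time $\tau_0+T$, not at the intermediate time where the threshold is crossed; nothing you wrote prevents the distance from rising again afterwards, and your final displayed inequality does not account for this. The fix is routine: once below threshold, Lemma~\ref{lem:DVV'comp} puts you back in the hypotheses of the proposition with the later base time $\tau_0+kN$, so you simply restart the iteration. Since each step advances time by at least $N$ there are finitely many restarts before case~(a) or~(c) terminates the argument, and the sub-threshold contributions $e^{-(\tau_0+kN)/20}$ sum geometrically to $O(e^{-\tau_0/20})$, so the telescoping bound on $\sum_i e_i$ and hence on $d(V_0,V_k)$ survives across all restarts.
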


Using this, the proofs of 
Theorems~\ref{thm:intro1} and \ref{thm:intro2} from the Introduction
follow the same arguments as the corresponding results in
Section~\ref{sec:neckpinch}. Let us finally consider
Theorem~\ref{thm:intro3}.

\begin{proof}[Proof of Theorem~\ref{thm:intro3}]
  Using Theorem~\ref{thm:Lawlor}, which also holds in the current
  setting, we know that we have a sequence $(\mathbf{x}_k, t_k) \to
  (\bOh,0)$ and $r_k\to 0$ such that the rescalings $\tilde{L}_k = r_k^{-1}(L_{t_k} -
  \mathbf{x}_k)$ converge, smoothly on compact sets,  to a Lawlor neck
  $\tilde{L}_\infty$ asymptotic to  
  $V_0$ at infinity. Here $V_0$ is the (unique) tangent flow at the
  singularity $(\bOh,0)$. Note that by Condition \eqref{cond:ast}, this Lawlor
  neck is exact, and so up to scale there are only two
  possibilities. Changing the scales $r_k$ if necessary,
  we can assume that in terms of suitable coordinates $z,w$
  for which $V_0 = \{zw=0\}$, we have $\tilde{L}_\infty = \{zw =
 \pm1
  \}$. 
  For sufficiently large $k$, we can remove a large ball $B_R$ from
  $\tilde{L}_k$, and replace it with a small Lagrangian perturbation of the two
  planes given by $V_0$.    Hence $\tilde{L}_k$ is an immersed Lagrangian where at the self-intersection point at the origin two sheets of the Lagrangian are intersecting transversely.
  This is
  exactly the reverse of the graded connected sum construction in
  \cite[Section 3.1]{TY02} (potentially taking the Lagrangians to be two sheets of the same Lagrangian in the connected sum). Hence $\tilde{L}_k$ is a graded self-connected sum and so is $L_{t_k}$ as desired.
  
Suppose from now on that $\tilde{L}_k$ is not connected. Then we can write  $\tilde{L}_k = \tilde{M}_{1,k}\#
  \tilde{M}_{2,k}$ as a graded connected sum. The choice of which component is $\tilde{M}_{1,k}$, respectively 
  $\tilde{M}_{2,k}$, depends on which of the two possible Lawlor necks
  $\tilde{L}_\infty$ is. Note that the Lagrangian angles of $\tilde{M}_{1,k}, \tilde{M}_{2,k}$
  approach the constant $\theta_{V_0}$ on the ball $B_R$ as we let $R,
  k\to\infty$ in the construction.

  The upshot of this discussion is that after scaling back to the
  original flow, we can write $L_{t_k} = M_{1,k} \# M_{2,k}$ as claimed.  It now remains for us to show that \eqref{eq:stab1} holds and, if $L$ is almost calibrated, that \eqref{eq:stab2} holds.   Unless
  the initial Lagrangian $L$ is special Lagrangian, in which case no
  singularity would form, we will have:
  \begin{equation}\label{eq:Mik1}
    \begin{aligned} \inf_L \theta &< \inf_{M_{1,k}} \theta,
      \inf_{M_{2,k}}\theta ;\\
      \sup_L \theta &> \sup_{M_{1,k}} \theta,
      \sup_{M_{2,k}}\theta; \\
      \mathrm{vol}(L) &> \mathrm{vol}(M_{1,k}) + \mathrm{vol}(M_{2,k}),
    \end{aligned} \end{equation}
  as long as $k$ is sufficiently large. It follows from the last
  inequality that
   $$ \mathrm{vol}(L) > \left|\int_{M_{1,k}} \Omega\right| + \left|
    \int_{M_{2,k}} \Omega \right|, $$
  for sufficiently large $k$.  We deduce that \eqref{eq:stab1} holds.

  Recall that in the almost
  calibrated case $\phi(M_{i,k})$ is uniquely defined by
  \begin{equation} \label{eq:phiMdefn}
    \phi(M_{i,k}) = \mathrm{arg}\, \int_{M_{i,k}} \Omega =
    \mathrm{arg}\, \int_{M_{i,k}} e^{i\theta}\, d\mathcal{H}^2.
  \end{equation}
 Using \eqref{eq:Mik1}, this implies that
  $$ \phi(M_{i,k}) \in (\inf_L \theta, \sup_L \theta), $$
  for sufficiently large $k$.   We thus conclude that \eqref{eq:stab2} holds in the almost calibrated case.
\end{proof}

\bibliography{ancient}
\bibliographystyle{amsplain}
\end{document}